\documentclass{amsart}
\usepackage{graphicx}

\makeatletter %较粗的内积符号
\newcommand*\bigcdot{\mathpalette\bigcdot@{.5}}
\newcommand*\bigcdot@[2]{\mathbin{\vcenter{\hbox{\scalebox{#2}{$\m@th#1\bullet$}}}}}
\makeatother

\usepackage{amsmath}
\usepackage{amsthm}
\usepackage{amsfonts}
\usepackage{mathrsfs}
\usepackage{xcolor}
\usepackage{amsmath}
\usepackage{amsfonts}

\usepackage{enumerate}
\usepackage{amssymb}
\usepackage{amsthm}%必须放在amsmath之后
\usepackage{bm}

\usepackage[colorlinks=true,linkcolor=blue]{hyperref}%这个放在最后防止冲突

\usepackage{latexsym, tikz-cd}
\usepackage[normalem]{ulem}%提供各种各样的标记线(可以用于数学公式环境中)

\newtheorem{theorem}{Theorem}[section]
\newtheorem{proposition}[theorem]{Proposition}
\newtheorem{corollary}[theorem]{Corollary}
\newtheorem{lemma}[theorem]{Lemma}
\newtheorem{conjecture}[theorem]{Conjecture}
\newtheorem*{conjectureprime}{Conjecture \ref{js14}$'$}

\newtheorem*{openquestion}{Open Question}
\theoremstyle{definition}
\newtheorem{definition}[theorem]{Definition}
\theoremstyle{remark}
\newtheorem{remark}[theorem]{Remark}

\newcommand{\nwc}{\newcommand}

\nwc{\Oph}{\operatorname{Op}_\hbar}

\renewcommand{\Re}{\operatorname{Re}}

\DeclareMathOperator{\Tr}{Tr}

\newcommand{\La}{\Lambda}

\renewcommand{\phi}{\varphi}

\nwc{\gl}{\langle}
\nwc{\gr}{\rangle}

\newcommand{\R}{{\mathbb R}}

\newcommand{\C}{{\mathbb C}}

\newcommand{\Z}{{\mathbb Z}}

\newcommand{\D}{{\mathbb D}}
\nwc{\rest}{\restriction}

\nwc{\defeq}{\stackrel{\rm{def}}{=}}
\renewcommand{\d}{\partial}

        \definecolor{pink}{rgb}{1,0,1}

\makeatletter

\@addtoreset{equation}{section}
\makeatother
\title[Steklov Spectral Geometry for Annular Surfaces]{Steklov Spectral Geometry for Annular Surfaces: Inverse spectral results and isospectral compactness}
\date{}

\author{Yujun Jin, Zuoqin Wang}
\thanks{Partially supported by   NSFC no. 12571064.}
\address{School of Mathematical Sciences\\
	University of Science and Technology of China\\
	Hefei, 230026\\ P.R. China\\}
\email{byjyj@mail.ustc.edu.cn}

\address{School of Mathematical Sciences\\
	University of Science and Technology of China\\
	Hefei, 230026\\ P.R. China\\}
\email{wangzuoq@ustc.edu.cn}

\begin{document}

\maketitle

\begin{abstract}
We study the inverse spectral problems for the Steklov spectrum on compact surfaces with boundary. We prove that among flat annular surfaces, the lateral surface of a conical frustum is uniquely determined by its Steklov spectrum. As a consequence, each circular annulus is uniquely determined among all planar domains, providing the first example of a non-simply connected Euclidean domain with this property. Furthermore, we show that any family of Steklov isospectral flat annular surfaces is compact in the $C^\infty$  topology, extending previous results for simply connected planar domains. These results are established through  a detailed analysis of the spectral zeta function and the zeta-regularized determinant associated with the Dirichlet-to-Neumann operator for annular surfaces.
\end{abstract}

\section{Introduction}
Let $(\Omega,g)$ be a compact surface with smooth boundary $M=\d \Omega$, which is topologically a union of disjoint circles. Consider the Steklov eigenvalue problem on $\Omega$, 
\begin{equation}
\left\{\begin{array}{ll}
    \Delta_g u=0, & \text{in }\Omega, \\
    \frac{\d u}{\d n}=\sigma u, & \text{on } M.
\end{array}
\right.
\end{equation}
Here, $n$ denotes the outward unit normal vector. This problem admits a discrete spectrum
\begin{equation}\label{eigenvalues}
0=\sigma_0\leq\sigma_1\leq\sigma_2\leq\dots,
\end{equation}
which is non-negative and diverges to infinity.   These Steklov eigenvalues also appear as the eigenvalues of the \emph{Dirichlet-to-Neumann} (DtN) operator $\Lambda_{M,g}: C^{\infty}(M)\longrightarrow C^\infty(M)$, defined by
\[
\Lambda_{M,g} f:=\frac{\d (\mathcal{H}f)}{\d n}\Big|_{M},
\]
where $\mathcal{H}f \in C^\infty(\Omega)$ denotes the harmonic extension of $f\in C^\infty(M)$ to $\Omega$. It is well known that $\Lambda_{M,g}$ is an elliptic pseudodifferential operator of order one on $M$ \cite{LU1989}. 

The inverse Steklov spectral problem concerns the extent to which the geometry of $\Omega$ can be recovered from the spectrum \eqref{eigenvalues}. Several related questions are formulated in \cite{JollivetSharafutdinov2014, GirouardPolterovich2017, Colbois2024}. A fundamental question is whether the Steklov spectrum uniquely determines the underlying surface up to isometry.
In general, the answer is negative. Indeed, for any conformal metric
\[
g'=\rho g,\quad \rho\in C^\infty(\Omega;\R_+)
\]
such that $\rho|_M = 1$, it is straightforward to verify that $\Lambda_{M,g'} = \Lambda_{M,g}$.
As a result, one can easily construct infinitely many Steklov isospectral metrics on $\Omega$ that are pairwise non-isometric. 

Motivated by this observation, Jollivet and Sharafutdinov formulated the
corresponding inverse problem in the simply connected case
\cite{JollivetSharafutdinov2014}. A broader version for compact Riemannian
surfaces was subsequently stated in
\cite{GIROUARD_PARNOVSKI_POLTEROVICH_SHER_2014}. We record it in the following
form.
\begin{conjecture}\label{js14}
 Two Riemannian metrics $g_1$ and $g_2$ on a given smooth surface $\Omega$ have the same Steklov spectrum if and only if they are $\sigma$-isometric, i.e., there exist a diffeomorphism $\Phi:\Omega\to\Omega$ and a function $\rho\in C^\infty(\Omega;\R_+)$ such that $\rho|_M=1$ and
 \[
 g_2=\rho\Phi^*g_1.
 \]
\end{conjecture}

The first goal of this paper is to investigate this conjecture for flat surfaces, and in particular, for Euclidean planar domains. Note that if two conformally equivalent flat metrics $g$ and $\rho g$  satisfy   $\rho|_M=1$, it necessarily follows that $\rho=1$ throughout  $\Omega$. Therefore, in this setting the conjecture reduces to:
\begin{conjectureprime}[Conjecture 1.1, flat version]\label{conjecture2}
    Two flat surfaces are Steklov isospectral if and only if they are isometric.
\end{conjectureprime}

This problem is analogous to Kac's famous inverse spectral question \cite{kac}, ``Can one hear the shape of a drum?'', which asks whether planar domains are uniquely determined by their Laplacian spectra up to isometry. Although answered negatively by Gordon, Webb, and Wolpert \cite{GWW92}, Kac's problem remains open for planar domains with smooth boundary. As for the Steklov problem, the only smooth Euclidean domains currently known to be uniquely determined by their Steklov spectra are planar disks, due to Edward \cite{ED93}, and three-dimensional balls, due to Polterovich and Sher \cite{Polterovich2015}. See also Wang and Yun \cite{Wang-Yun} for analogous   results in 3-dimensional simply connected space forms. 
On the other hand, positive results have been obtained when the inverse 
Steklov problem is restricted to several special classes. Examples include
warped product manifolds \cite{DaudeKamranNicoleau2021, Gendron2020} and
certain special planar polygons \cite{DrydenGordonMorenoRowlettVillegasBlas2026}.

In this work, we focus on a specific class of surfaces.
\begin{definition}
	We call a compact, orientable, connected  Riemannian surface $(\Omega, g)$ an \emph{annular surface} if it is  of genus zero with exactly two smooth boundary components. 
We denote by $\mathcal{A}$ the set of all smooth \emph{annular surfaces}.
\end{definition}
According to the Osgood--Phillips--Sarnak uniformization theorem (see Section \ref{uniformazation}), any annular surface is conformal to some flat cylinder $\mathcal C_l$  with geodesic boundary.   

A typical annular surface is  the lateral surface of a conical frustum $A_{r_1,r_2,h}$ (see Section \ref{application} for details), endowed with the standard Euclidean metric. 
Our first main result in this paper is to prove that these special annular surfaces are spectrally determined among flat annular surfaces, thereby providing evidence for Conjecture \ref{js14}$'$.
\begin{theorem}\label{main1}
  Among all flat surfaces in $\mathcal A$, the lateral surface of each conical
    frustum $A_{r_1,r_2,h}$ is uniquely determined by its Steklov spectrum.
\end{theorem}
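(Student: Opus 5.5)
Using the Osgood--Phillips--Sarnak uniformization recalled in Section \ref{uniformazation}, we write every flat $(\Omega,g)\in\mathcal A$ as the flat cylinder $\mathcal C_l=[0,l]\times S^1$ (with $S^1=\R/2\pi\Z$) carrying a metric $g=e^{2\phi}(dt^2+d\theta^2)$ with $\phi$ harmonic. A flat metric conformal to a flat one has harmonic log-density. The Steklov data depend only on $l$ and on the two boundary densities $e^{\phi}|_{t=0}$ and $e^{\phi}|_{t=l}$. Since $\phi$ is harmonic, these boundary values determine $\phi$, and hence the isometry class of $\Omega$. For the frustum $A_{r_1,r_2,h}$ one checks that $\phi$ is affine in $t$. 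The two boundary densities are then constants, equal to $r_1$ and $r_2$ after normalizing the circles to length $2\pi$, and $l$ is an explicit function of $(r_1,r_2,h)$, namely a multiple of $\log(r_2/r_1)$ depending on the cone angle. The goal is therefore to show that if a flat annulus is Steklov isospectral to $A_{r_1,r_2,h}$, then its boundary densities are constant and its conformal modulus $l$ matches.

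The first step uses the standard heat/zeta invariants of the DtN operator. The DtN operator $\Lambda$ differs from $\bigoplus_j |D_{s_j}|$, where $s_j$ is arclength on the $j$-th boundary circle, by a smoothing operator. This is because the two boundary components decouple modulo $O(h^\infty)$, and flatness kills the local curvature contributions. The leading invariants then give the lengths: the Weyl law gives $L_1+L_2$, and the expansion of $\Tr e^{-t\Lambda}$ gives $\sum_j \coth(L_j t/2)$-type terms up to $O(t^\infty)$. Since this quantity is a spectral invariant, the unordered pair $\{L_1,L_2\}$ is determined. Alternatively, one can read the lengths off clusters of eigenvalues near $2\pi k/L_j$.

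The second step recovers the finer conformal information from the zeta-regularized determinant $\det\Lambda$, or from $\zeta_\Lambda'(0)$. I expect to derive a Polyakov-type formula on the cylinder model expressing $\log\det'\Lambda$ in terms of $l$, the lengths, and the boundary values of $\phi$ and $\partial_n\phi$. Roughly, it should be a constant plus a function of $l$ (coming from $\prod_k \coth(kl)$-type factors for the flat cylinder) plus Dirichlet-energy-type terms of the form $-\frac{1}{4\pi}\int (\phi\,\partial_n\phi)+\dots$, taken relative to the averages of $\phi$. Among metrics with fixed lengths and fixed $l$, these energy terms should be extremized exactly when the boundary densities are constant, via a Cauchy--Schwarz or Fourier-mode argument (each nonconstant mode adds a positive multiple of $|\hat\phi_k|^2$). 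Combining this with monotonicity in $l$ of the explicit cylinder term should show that the frustum uniquely attains the value of $\det\Lambda$ among flat annuli with the given lengths. I expect this combination to be the main obstacle. Proving strict monotonicity in $l$, and showing that the mode contributions cannot compensate a change in $l$, requires explicit formulas for the DtN operator of the flat cylinder. That operator acts on mode $k$ by a $2\times2$ matrix with entries $k\coth(kl)$ and $-k/\sinh(kl)$, and the needed estimates are delicate.

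If the determinant alone is insufficient, the fallback is to use higher zeta values $\zeta_\Lambda(-n)$ or the $O(t^\infty)$-exponential corrections to the heat trace. These carry the interaction between the boundaries: the off-diagonal terms are of size $e^{-kl}$, so the leading exponential decay rate should detect $l$. Once $l$ and constant boundary densities are established, harmonicity forces $\phi$ affine, and $(r_1,r_2,h)$ are recovered from $L_1,L_2,l$. Hence the surface is isometric to $A_{r_1,r_2,h}$.
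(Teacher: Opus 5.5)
Your reduction has the right structure. You uniformize to $(\mathcal C_l,e^{2\phi}g_0)$ with $\phi$ harmonic, recover the boundary lengths, and then need to show that $l$ agrees and that $e^{\phi}$ is constant on each boundary circle. The gap is in that last, decisive step.

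You expect a Polyakov formula for $\log{\det}'\Lambda_g$ containing Dirichlet-energy terms in $\phi$ that penalize nonconstant boundary densities. No such terms exist. For the DtN operator of a surface the conformal anomaly vanishes: $\Lambda_g^2$ has the full symbol of the differential operator $D_g^2$, so the residue density of $\log\Lambda_g$ is zero. Hence ${\det}'\Lambda_g/L_g(M)$ is a conformal invariant (Guillarmou--Guillop\'e). Every metric on $\mathcal C_l$ conformal to $g_0$ with the same total boundary length therefore has the same determinant. The determinant carries no information about the nonconstant Fourier modes of the density, and the extremization you describe cannot be carried out.

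Conversely, the part you flagged as delicate is trivial. On the flat cylinder the $\tanh$ and $\coth$ factors cancel mode by mode, so $\log{\det}'\Lambda=\log(2/l)+2\log(2\pi)$ and ${\det}'\Lambda_g/L_g(\Sigma)=2\pi/l$. The determinant gives you $l$ and nothing more. Your fallback (higher zeta values, exponentially small heat-trace corrections) again targets $l$ and supplies no rigidity mechanism for the densities.

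The missing idea is a spectral invariant that is coercive in the nonconstant modes of $a=e^{-\phi}|_\Sigma$. The paper uses $\zeta_{\Sigma,g}(-2)=\Tr((a\Lambda)^2-(aD)^2)$. In the basis $\psi_k^{(\pm1)}$ this is the diagonal form $\sum_k\bigl(B_l(k)|a_k^{(1)}|^2+B_l'(k)|a_k^{(-1)}|^2\bigr)$, with $B_l(k)>0$ for all $k$, $B_l'(k)>0$ for $k\neq0$, and $B_l'(0)=0$.

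Discard every term except $B_l(0)(a_0^{(1)})^2$, where $a_0^{(1)}=\frac1{4\pi}\int_\Sigma a\,d\theta$. Cauchy--Schwarz applied to $r_i=\int_{\Sigma_i}a^{-1}d\theta$ gives $\int_{\Sigma_i}a\,d\theta\ge 4\pi^2/r_i$. Together these yield $\zeta_{\Sigma,g}(-2)\ge\pi^2B_l(0)(1/r_1+1/r_2)^2=\zeta_{\partial A_{r_1,r_2,h}}(-2)$, with equality iff $a$ is constant on each boundary circle. Isospectrality forces equality, and harmonicity of $\phi$ then yields the frustum.

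A minor point concerns your heat-trace route to $\{L_1,L_2\}$, which is not correct as stated. The smoothing difference $\Lambda_g-|D_g|$ contaminates the coefficients of $t^k$ for $k\ge1$. Already $\Tr(e^{-t\Lambda_g}-e^{-t|D_g|})=-t\Tr(\Lambda_g-|D_g|)+O(t^2)$, and this trace is positive for the standard cylinder. So the expansion agrees with $\sum_j\coth(\pi t/L_j)$ only modulo $O(t)$, not $O(t^\infty)$. The eigenvalue-cluster argument of Girouard--Parnovski--Polterovich--Sher, which you mention as an alternative, is what actually gives the individual lengths.
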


By \cite{GIROUARD_PARNOVSKI_POLTEROVICH_SHER_2014}, for any compact
smooth surface $\Omega$, the Steklov spectrum determines the number of connected
components of its boundary $M$. Thus Theorem \ref{main1} directly yields the following result.
\begin{corollary}
    	Each annulus bounded by two concentric circles is uniquely determined by its Steklov spectrum among all planar domains with smooth boundary.
\end{corollary}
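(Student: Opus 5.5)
The corollary follows from Theorem \ref{main1} once we know that any planar domain Steklov isospectral to a concentric annulus is itself a flat annular surface. Fix a concentric annulus $A=\{r_2<|x|<r_1\}\subset\R^2$ and let $\Omega\subset\R^2$ be a bounded planar domain with smooth boundary having the same Steklov spectrum as $A$.

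\textbf{Step 1: $\Omega$ has two boundary components.} By \cite{GIROUARD_PARNOVSKI_POLTEROVICH_SHER_2014}, the Steklov spectrum of a compact smooth surface determines the number of connected components of its boundary. The boundary of $A$ has two components, so $\d\Omega$ also consists of exactly two smooth closed curves.

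\textbf{Step 2: $\Omega$ lies in $\mathcal A$.} Since $\Omega$ is a connected, bounded, smooth domain in the plane, it is a compact, orientable, connected surface. Its closure embeds in $S^2$, so it has genus zero. By the Jordan curve theorem, a planar domain with two boundary curves is the region between an outer curve and an inner curve nested inside it, so it is doubly connected. Hence $\Omega$, with the Euclidean metric, is a flat surface in $\mathcal A$.

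\textbf{Step 3: $A$ is a frustum with $h=0$.} The annulus $A$ is the degenerate lateral surface $A_{r_1,r_2,0}$ of a conical frustum. Its flat metric is the standard Euclidean one, so $A$ is also a flat surface in $\mathcal A$.

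\textbf{Step 4: apply the theorem.} Both $\Omega$ and $A$ are flat annular surfaces with the same Steklov spectrum, so Theorem \ref{main1} gives that $\Omega$ is isometric to $A$. An intrinsic isometry between two planar domains is a local isometry of the Euclidean plane, so it is the restriction of a rigid motion; this uses that $\Omega$ is connected. Therefore $\Omega$ is congruent to $A$, which is the uniqueness asserted.

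\textbf{Main obstacle.} The only delicate point is confirming that the parametrization $A_{r_1,r_2,h}$ in Section \ref{application} includes the flat case $h=0$ and that Theorem \ref{main1} is stated for it. If $h=0$ were excluded there, the fallback is to note that the lateral surface of any frustum with $r_1\neq r_2$ is isometric, after cutting and unrolling, to a flat annular sector glued along its edges. The planar annulus is the case of full angle $2\pi$, so the same spectral argument would cover it.
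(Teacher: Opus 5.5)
Your proposal is correct and follows the paper's argument. The result of \cite{GIROUARD_PARNOVSKI_POLTEROVICH_SHER_2014} forces a planar domain isospectral to the annulus to have two boundary components, so it is a flat surface in $\mathcal A$, and Theorem \ref{main1} then applies because the paper explicitly defines $A_{r_1,r_2,0}$ as the planar annulus (so your fallback is unnecessary); your remarks on genus zero and on upgrading the isometry to a rigid motion just fill in details the paper leaves implicit.
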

To our knowledge, this provides the first example of a non-simply connected Euclidean domain that is uniquely determined by its Steklov spectrum. 

The main tools in the proof of   Theorem \ref{main1} are the spectral zeta function  $\zeta_{\Sigma,g}(s)$  and the zeta-regularized determinant $\mathrm{det}'\Lambda$ associated with the DtN operator $\Lambda_{\Sigma,g}$ (see Section \ref{defzetainvariant} for definitions). Our first observation is that, according to Guillarmou--Guillopé's conformal invariance formula for the determinant \cite{8180465}, the Steklov spectrum determines the conformal class of an annular surface (see Corollary \ref{specdetcon}).  More precisely, once the spectrum is fixed, all such surfaces can be identified with  a fixed cylinder $\mathcal C_l$ endowed with metrics $g$ conformal to the standard product metric $g_0$. This is done by explicitly computing the log-determinant of such a cylinder.    We then  derive explicit formulas for the zeta values $\zeta_{\Sigma,g}(-2m)$, $m \in \mathbb{Z}_+$, in terms of the Fourier coefficients of the conformal factors. This extends earlier computations in \cite{Malkovich2015} for topological disks to annular surfaces.
The last step in the proof is to show that among all flat surfaces in $\mathcal{A}$, the surface $A_{r_1,r_2,h}$ uniquely minimizes $\zeta_{\Sigma,g}(-2)$.

In the remainder of this paper, we address a related weaker question: the compactness of the family of Steklov isospectral planar domains in the $C^\infty$ topology. We shall use the Cheeger--Gromov
notion of precompactness, specialized   to compact
surfaces and with   specified regularity of convergence. More precisely, let $\mathcal{F}$ be a family of compact surfaces with boundary. We say that $\mathcal F$ is compact in the $C^\infty$ topology if, for every
sequence $S_k$ in $\mathcal F$, there exists a subsequence $S_{k_j}$, a compact reference surface
$\Omega$ together with smooth metrics $g_{k_j}$ on $\Omega$, and a family of 
isometries
\[
P_{k_j}:(\Omega,g_{k_j})\longrightarrow S_{k_j},
\]
such that $g_{k_j}$ converges to a smooth metric $g$ in $C^\infty$. For
$s\geq 0$, compactness in the $H^s$ topology is defined in the same way,
with the convergence $g_{k_j}\to g$ understood in $H^s(\Omega)$ with
respect to any fixed smooth background metric.

While the compactness problem for Laplace-isospectral metrics has been answered positively on surfaces \cite{OPSmodulispace,OSGOOD1988212,Kim2008}, only limited results are known in higher dimensions (see, e.g., \cite{ChangYang,ChenXu1996,LiuWang2019}). By contrast, the analogous question in the Steklov setting remains largely unexplored. 
One of the difficulties lies in the lack of sufficiently strong spectral invariants. The classical approach to establishing the compactness of a family of isospectral metrics relies on controlling Sobolev norms of  geometric quantities via the coefficients of the heat trace expansion, which are spectral invariants. While this method is effective for elliptic differential operators such as the Laplacian, it encounters serious obstacles in the case of the DtN operator. Indeed, as a pseudodifferential operator, only some of the heat coefficients of the DtN operator are local \cite{gilkey04}. Here, ``local" means that these coefficients are given by sums of integrals of quantities that depend only on the metric near the boundary. Consequently, these coefficients alone do not currently provide the necessary control of the relevant Sobolev norms.

Nevertheless, for simply connected planar domains, this difficulty can be circumvented by exploiting the spectral zeta function at negative integers. Edward \cite{Edward01011993} showed that any family of Steklov isospectral simply connected planar domains is compact in $H^{\frac52-\epsilon}$ for any $\epsilon>0$. A crucial step in the proof is the use of the values $\zeta_{\partial \D,g}(s)$ for $s=-1,-2,-3, -4$, where $ \D$ denotes the unit disk. This result was later strengthened to $C^\infty$-compactness by Jollivet and  Sharafutdinov by considering all values $\zeta_{\d \D,g}(-2m)$, $m\in\Z_+$ \cite{JOLLIVET20181712}. More precisely, they prove that, for all $m\in \Z_+$, there exist positive constants $c_m$ such that 
\begin{equation}\label{jsestimate}
   c_m \sum_{k=m+1}^\infty k^{2m+1}|(\widehat{b^m})_k|^2\leq \zeta_{\d\D,g}(-2m),
\end{equation}
where $b$ is a positive smooth function defined by $g=b^{-2}g_E$, with $g_E$ the canonical Euclidean metric on $\D$, and $(\widehat{b^m})_k$ denotes the $k$-th Fourier coefficient of $b^m|_{\d \D}$.  Estimates \eqref{jsestimate} give uniform control of the
higher Fourier coefficients of the boundary conformal factor. 
It remains only to handle the first Fourier coefficient. This can be achieved by replacing \(g\) with a conformal metric \(g'\) such that \((\mathbb{D},g')\) is isometric to \((\mathbb{D},g)\) and its first Fourier coefficient vanishes. The existence of such a normalization is due to Edward
\cite{Edward01011993}.

For the case of multiply connected planar domains, the following open question was posed by Colbois, Girouard, Gordon, and Sher:
\begin{openquestion}[\cite{Colbois2024}, Open Question 8.5]
	Are families of multiply connected, compact, Steklov isospectral planar domains necessarily compact in the $C^\infty$ topology?
\end{openquestion}

Our second main result in this paper is   the following compactness theorem, which in particular provides an affirmative answer to the above open question  for doubly connected planar domains. 
\begin{theorem}\label{main::compactness}
	Any family of Steklov isospectral flat surfaces in $\mathcal{A}$ is compact in the $C^\infty$ topology. 
\end{theorem}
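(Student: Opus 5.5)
Our approach follows the Edward / Jollivet--Sharafutdinov strategy, adapted to the cylinder model.

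\textbf{Step 1: fix the conformal class.} By Corollary \ref{specdetcon}, the Steklov spectrum determines the conformal class of an annular surface. So every member of the isospectral family can be realized as $(\mathcal C_l, g)$ with the same $l$ and with $g = e^{2\phi} g_0$, where $g_0$ is the flat product metric on $\mathcal C_l=[0,l]\times S^1$.

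Since each surface is flat, $\phi$ is harmonic on $\mathcal C_l$. It is therefore determined by its boundary values $\phi|_{\{0\}\times S^1}$ and $\phi|_{\{l\}\times S^1}$. Separating variables, every harmonic $\phi$ on the cylinder is a combination of the modes
\[
1,\qquad t,\qquad e^{\pm k t} e^{ik\theta}\quad (k\neq 0).
\]
Hence $C^\infty$ control of $\phi$ on $\mathcal C_l$ reduces to controlling the Fourier coefficients of the two boundary traces, say through $b_j = e^{-\phi}|_{M_j}$ for $j=1,2$, in every Sobolev norm. Two quantities are spectral invariants and are fixed in advance:
\begin{itemize}
\item the boundary lengths $\int_{M_j} e^{\phi}\,d\theta$, which are determined by the spectrum by \cite{GIROUARD_PARNOVSKI_POLTEROVICH_SHER_2014};
\item $\zeta(0)$ together with the determinant.
\end{itemize}
These fix the zero modes and give a priori $L^1$-type bounds.

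\textbf{Step 2: bound the high Fourier modes.} Using the explicit formulas for $\zeta_{\Sigma,g}(-2m)$ in terms of the Fourier coefficients of the conformal factors (derived earlier for the proof of Theorem \ref{main1}), we aim for the analogue of \eqref{jsestimate}:
\[
c_m\sum_{j=1,2}\ \sum_{|k|\ge m+1}|k|^{2m+1}\,\bigl|(\widehat{b_j^m})_k\bigr|^2\ \le\ \zeta_{\Sigma,g}(-2m)+C_m .
\]
Here $C_m$ depends only on $l$, $m$ and the lower-order invariants. The plan is to split $\Lambda_{\Sigma,g}$ into two parts:
\begin{itemize}
\item the block-diagonal part, which is the disk-like DtN operator of each boundary circle conjugated by $b_j$;
\item the coupling between the two circles, which on $\mathcal C_l$ is smoothing, since its kernel decays like $e^{-|k| l}$.
\end{itemize}
The diagonal part yields the Jollivet--Sharafutdinov positive quadratic form in the $b_j$. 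The smoothing coupling should contribute only terms bounded by lower Sobolev norms of the $b_j$, because its symbol is $O(|k|^{-\infty})$. These terms are then absorbed by interpolation, by induction on $m$.

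\textbf{Step 3: handle the low modes.} The remaining task is the finitely many low modes, $|k|\le m$. For the disk, Edward used Möbius normalization to kill the first Fourier coefficient. On the cylinder there is no such noncompact symmetry group: the conformal automorphisms of $\mathcal C_l$ are only rotations and the reflection. So we do not expect a normalization to be available; instead the low modes must be bounded directly, and we expect them to be genuinely bounded.

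The argument for the $|k|=1$ mode would go as follows. For $m=1$, the quantity $\sum_{|k|\ge2}|k|^3|\hat b_k|^2$ is bounded. Together with the fixed boundary length and the positivity of $b_j$, this should bound the whole trace in $H^{3/2}$:
\begin{itemize}
\item If $b_j$ were degenerating, then $\hat b_{\pm1}$ would have to blow up.
\item But $|\hat b_{\pm 1}|\le \hat b_0$, because $b_j>0$.
\item And $\hat b_0$ is controlled by $\|e^{\phi}\|_{L^1}$ via Jensen and the length bound, once we also have an upper bound on $\phi$. That upper bound comes from the $H^{3/2}\subset L^\infty$ control of the non-low part.
\end{itemize}
Making this bootstrap non-circular is the step I expect to be the main obstacle. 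If it fails, I would fall back on the determinant: $\log\det'\Lambda$ is a spectral invariant that enters through the Guillarmou--Guillopé formula, and it should blow up as a single mode degenerates on the fixed $\mathcal C_l$.

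\textbf{Step 4: conclude.} With uniform $H^s$ bounds on the traces for all $s$, harmonicity gives uniform $C^\infty$ bounds on $\phi$ on $\mathcal C_l$. Arzelà--Ascoli then yields a subsequence $g_k\to g$ in $C^\infty$. The limit $g$ is flat with positive conformal factor, since $\phi$ is bounded, so the family is compact in the $C^\infty$ topology.
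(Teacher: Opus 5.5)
There is a genuine gap in Step 3. It comes from Step 2, where you treat the part of $\Lambda$ that couples the two circles as an error to be absorbed. The data you then use for the modes $|k|\le 1$ cannot bound them: the high-mode bounds, the fixed lengths, positivity, and the determinant. Here is a concrete family. On each circle take $b_j=N(1+\cos\theta)+\delta_N$ with $\delta_N=\sqrt{N^2+4\pi^2/r_j^2}-N$. Then:
\begin{itemize}
\item $b_j>0$, and $\int_{\Sigma_j}b_j^{-1}\,d\theta=2\pi/\sqrt{(N+\delta_N)^2-N^2}=r_j$, so the lengths are fixed.
\item $b_j^m$ is a trigonometric polynomial of degree $m$, so every Jollivet--Sharafutdinov quantity $\sum_{|k|\ge m+1}|k|^{2m+1}|(\widehat{b_j^m})_k|^2$ vanishes.
\item With $l$ and the lengths fixed, ${\det}'\Lambda_g=2\pi L_g(\Sigma)/l$ is fixed too (Propositions \ref{prop::zetaconformalinv} and \ref{prop:cylinderlogdet}).
\item $\zeta_{\Sigma,g}(0)$ is a topological constant.
\end{itemize}
Yet $(\widehat{b_j})_0=N+\delta_N\to\infty$. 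This is the M\"obius degeneration of the disk, placed on each boundary circle. So no bootstrap built from these inputs can control the low modes.

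Your inequalities also point the wrong way. The length gives only the lower bound $(\widehat{b_j})_0\ge 2\pi/r_j$, by Cauchy--Schwarz. An $L^\infty$ bound on $b_j=e^{-\phi}$ bounds $\phi$ from below, not from above. The determinant fallback is empty as well, since ${\det}'\Lambda_g/L_g(\Sigma)$ is constant on the whole conformal class.

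The missing idea is that the coupling between the two circles is exactly what controls the low modes.
\begin{itemize}
\item In the cylinder Fourier basis, $\zeta_{\Sigma,g}(-2)=\sum_k\bigl(B_l(k)|a^{(1)}_k|^2+B'_l(k)|a^{(-1)}_k|^2\bigr)$; see \eqref{eq::zeta-2}.
\item The coefficients $B_l(0)$, $B_l(\pm1)$ and $B'_l(\pm1)$ are strictly positive, by AM--GM. The disk weight $(k^3-k)/3$ vanishes at $k=0,\pm1$, so these coefficients come entirely from $\Lambda-|D|$, the very part you planned to absorb.
\item Positivity of $a$ gives $0<(\widehat{a_i})_0\le 2a^{(1)}_0$, which compensates for $B'_l(0)=0$. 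Together with the previous point, this bounds all modes $|k|\le 1$ of both traces.
\item Hence $\|a\|^2_{H^{3/2}(\Sigma)}\lesssim\zeta_{\Sigma,g}(-2)$, so $\sup a$ is bounded. For $m\ge 2$, the modes $|k|\le m$ of $a_i^m$ are then bounded by $(\sup a)^m$.
\end{itemize}

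No absorption or induction is needed either. The coupling contribution satisfies $\Tr\bigl((a\Lambda)^{2m}-(a|D|)^{2m}\bigr)>0$ (Lemma \ref{positivetrace}). This follows from $\Lambda>|D|$ on functions supported on one circle plus the operator Jensen inequality. So this contribution is simply dropped, and the Jollivet--Sharafutdinov bound is applied to the diagonal part $\Tr\bigl((a|D|)^{2m}-(aD)^{2m}\bigr)$.

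Finally, to pass from $a^m$ to $\phi=-\log a$, and to keep the limit metric nondegenerate, you need a positive lower bound on $a$. Your proposal only asserts this. The paper obtains it from $\zeta_{\Sigma,g}(-1)$ via Kogan's formula and Edward's argument (Proposition \ref{prop::ineq2}).
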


The first step in our proof is to extend the estimate \eqref{jsestimate}  to  annular surfaces. More precisely, for a cylinder with boundary $\Sigma$, we show that, for all $m\in \Z_+$, there exist positive constants $d_{m,l}$ and $d'_{m}$ such that
\[
\|{a}^m\|_{H^{m+{\frac12}}(\Sigma)}^2\leq d_{m,l}(\zeta_{\Sigma,g}(-2))^{m}+d'_{m}\zeta_{\Sigma,g}(-2m),
\]
where $a=e^{-\phi}|_{\Sigma}$ is the positive smooth boundary function
associated with the conformal metric $g=e^{2\phi}g_0$, 
and the constants $d_{m,l}$ and $d'_{m}$ depend only on $m$ and   the Steklov spectrum. We then use the value $\zeta_{\Sigma,g}(-1)$ to give a positive lower bound for $a$. Together with the values $\zeta_{\Sigma,g}(-2m)$, we obtain bounds for all Sobolev norms of $a$, and the compactness follows.

For surfaces with  at least three boundary components,   the compactness analysis of Steklov isospectral families becomes more complicated, since the Steklov spectrum may no longer determine the conformal class. In the case of hyperbolic surfaces,  
Wentworth \cite{WentworthDN} proved that on  a fixed genus-zero topological surface with at least three boundary components, any Steklov isospectral family of hyperbolic metrics with geodesic boundary is compact in the $C^\infty$ topology. In a subsequent paper, we will address the Steklov isospectral compactness problem for flat surfaces with at least three  boundary components. 

The paper is organized as follows. In Section \ref{sec::compactsurfaces}, we first prove the main trace identities for zeta values of the DtN operator (Proposition \ref{traceidentities}). We also establish a Polyakov-type formula (Proposition \ref{prop::zero-mode-correction}) for general conformally covariant pseudodifferential operators, using which we provide an alternative proof of Guillarmou--Guillopé's conformal invariance formula for the determinant. In Section \ref{Section::OPS}, we recall the Osgood--Phillips--Sarnak uniformization for annular surfaces, and then use cylindrical uniformization to show that the Steklov spectrum determines the conformal class. We also obtain a compactness-in-conformal-class result (Corollary \ref{compactinconfclass}) in the spirit of \cite{ChangYang}, which may be of independent interest.  In Section \ref{sec::inv}, we use the trace identities from Section \ref{sec::compactsurfaces} to compute the zeta invariants in the cylindrical model and  prove Theorem \ref{main1}. Finally, in Section \ref{compactness}, we establish the desired uniform estimates for the conformal factors of Steklov isospectral annular surfaces and complete the proof of Theorem \ref{main::compactness}.

\section{Spectral invariants of the DtN operator on compact surfaces}\label{sec::compactsurfaces}

\subsection{The full symbols}
Let $(\Omega,g)$ be a compact surface with smooth boundary $M$. 
Denote the connected components of $M$ by  $M_i$ ($i=1,\dots,n$), and denote the length of $M_i$ by $L_g(M_i)$. Let $\Lambda_{M,g}$ be the DtN operator on $M$ and let $D_{M,g}=\frac{1}{\sqrt{-1}}\d_\tau$ be the tangential derivative, where $\tau$ is the arc-length parameter of the boundary $M$ with respect to the metric $g$. If the underlying manifold is clear, we simply write them as $\Lambda_g$ and $D_g$. 

The following lemma will be used in the subsequent trace computations as a technical tool to prove that certain pseudodifferential operators are of trace class. A related computation in the simply connected case is carried out in \cite[Page 283]{Sharafutdinov18}. The result below extends that analysis to general compact surfaces with boundary. 

\begin{lemma}\label{symbols}
  For any smooth function $a$ on $M$, the operators $(aD_g)^2$ and
  $(a\Lambda_g)^2$ have the same full symbol.
\end{lemma}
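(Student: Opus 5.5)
The idea is to show that $\Lambda_g$ and the operator $|D_g|=\sqrt{D_g^2}$ agree modulo smoothing operators; the symbol identity then follows from the symbol calculus. The classical fact (Lee--Uhlmann) is that the full symbol of the DtN operator in boundary normal coordinates is determined by the metric germ at the boundary. In dimension two the boundary is one-dimensional, so the relevant metric near each component $M_i$ can be put in a normal form. Concretely, near $M_i$ I use isothermal-type coordinates $(\tau,r)$, with $\tau$ the arc length on $M_i$ and $r$ the distance to the boundary. Conformally, a collar neighborhood of $M_i$ is equivalent to a flat collar $\{0\le r<\epsilon\}\times (\R/L_g(M_i)\Z)$ with metric $e^{2\psi}(dr^2+d\tau^2)$, where $\psi|_{M_i}=0$. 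This is possible because, by a conformal change of coordinates, one can arrange the induced boundary metric to be $d\tau^2$. By the conformal invariance $\Lambda_{\rho g}=\Lambda_g$ for $\rho|_M=1$ recalled in the introduction, together with locality of the full symbol, the full symbol of $\Lambda_g$ near $M_i$ coincides with that of the DtN operator of a flat half-cylinder. For the flat half-cylinder, Fourier analysis gives $\Lambda=|D_\tau|$ modulo smoothing terms coming from the other end. Hence $\Lambda_g\equiv |D_g|$ modulo $\Psi^{-\infty}(M)$.

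Key steps in order. First, construct conformal collar coordinates near each boundary circle in which $g=e^{2\psi}(dr^2+d\tau^2)$ with $\psi=0$ on $M$. This uses the existence of isothermal coordinates adapted to the boundary, for instance by solving a Dirichlet problem for a harmonic conjugate. Second, invoke the locality of the full symbol of $\Lambda_g$ (Lee--Uhlmann: the symbol is computed by a Riccati factorization that depends only on the Taylor series of $g$ at $M$). Combined with the conformal invariance of the DtN map, this gives $\sigma(\Lambda_g)=\sigma(\Lambda_{\text{flat}})$. Third, compute $\sigma(\Lambda_{\text{flat}})=|\xi|$ exactly, modulo $S^{-\infty}$. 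Fourth, since $D_g$ is a differential operator with symbol $\xi$, we have $(aD_g)^2\equiv(a|D_g|)^2$ in full symbol only if commutator terms match. This is the step that needs care. The full symbol of $a|D|a|D|$ involves derivatives of $|\xi|$. These vanish for $|\xi|$ large, because $|\xi|$ is linear in $\xi$ on each half-line $\xi>0$ and $\xi<0$. So, modulo $S^{-\infty}$ and up to a cutoff near $\xi=0$, $|D|$ acts like $\pm D$ microlocally, and $\operatorname{sgn}(D)$ commutes with multiplication by $a$ modulo smoothing operators in dimension one. Since $\operatorname{sgn}(D)^2=1$, we get $a|D|a|D|\equiv a\,\operatorname{sgn}(D)\,D\,a\,\operatorname{sgn}(D)\,D \equiv aDaD$.

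The main obstacle I expect is this last commutator issue: $[\operatorname{sgn}(D),a]$ is smoothing. On the circle this is the classical fact that the Hilbert transform commutes with smooth multipliers modulo smoothing operators. It follows from the composition formula $\sum_\alpha \frac{1}{\alpha!}\partial_\xi^\alpha \operatorname{sgn}(\xi)\,D_x^\alpha a$: all terms with $\alpha\ge1$ vanish for $|\xi|\ge1$. The same composition formula, applied directly with $|\xi|$ in place of $\operatorname{sgn}(\xi)$, yields the claim cleanly in one computation. I would present the argument that way, treating each component $M_i$ separately; components do not interact at the symbol level.
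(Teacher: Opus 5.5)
\textbf{The gap is in your first step.} Your overall strategy is sound: reduce to a flat model, then do the one-line composition computation. Your third and fourth steps are correct, and the final computation is essentially the paper's. But the collar chart you build in step one, with $g=e^{2\psi}(dr^2+d\tau^2)$ and $\psi|_{M_i}=0$, does not exist for a general smooth metric.

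Take any flat conformal collar chart $w=\theta+it$ near $M_i$; such charts do exist. In it, $g=e^{2\phi}|dw|^2$ and $M_i=\{t=0\}$. Suppose $z=\tau+ir=F(w)$ were a chart of the kind you want. Then, after a reflection if needed, $F$ is holomorphic on the one-sided collar, smooth up to $t=0$, and real on $t=0$. By the Schwarz reflection principle it extends holomorphically across $t=0$, so $F|_{t=0}$ is real-analytic. Since $e^{\phi}=e^{\psi\circ F}|F'|$ and $\psi=0$ on the boundary, $|F'|=e^{\phi}$ on $t=0$. This forces $\phi|_{M_i}$ to be real-analytic, which fails for instance for $g=e^{2\phi}g_0$ on $\mathcal C_l$ with $\phi|_{\Sigma_1}$ smooth but not analytic.

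Put differently, you need both $r|_{M_i}=0$ and $|\nabla_g r|=1$ on $M_i$. That is full Cauchy data for a harmonic function, an overdetermined problem. Solving a Dirichlet problem and taking a harmonic conjugate gives you no control over the boundary parametrization.

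\textbf{Two ways to repair it.} The first is what the paper does: do not normalize the boundary factor. In a flat collar chart, locality together with the two-dimensional covariance $\Lambda_{e^{2\phi}g_{\mathrm{flat}}}=e^{-\phi}\Lambda_{g_{\mathrm{flat}}}$ gives $\Lambda_g\equiv b\,\Lambda_{\mathrm{flat}}$ and $D_g=b\,D_{\mathrm{flat}}$, where $b=e^{-\phi}|_{M_i}$. You then run your composition computation with $c=ab$ in place of $a$. For $|\xi|\geq 1$ you get $c^2\xi^2+c(Dc)\xi$ for both $(c\Lambda_{\mathrm{flat}})^2$ and $(cD_{\mathrm{flat}})^2$.

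The second keeps your normalization but builds the chart only to infinite order at $M_i$. Take an almost-holomorphic extension $F$ of the arc-length map, via Borel's lemma, with $\partial_{\bar w}F=O(t^\infty)$ and $F$ real on $t=0$. This gives $F_*g=e^{2\psi}(dr^2+d\tau^2)+O(r^\infty)$ with $\psi|_{r=0}=0$. That suffices, because the full symbol of the DtN operator depends only on the jet of the metric at $M$.
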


\begin{proof}
The assertion is local on the boundary. Fix a boundary component $M_i$, which is topologically a circle. Let $\D\subset \R^2$ be a unit disk. Choose a diffeomorphism from a collar neighborhood of $\partial\D$ onto a
collar neighborhood of $M_i$ such that, in this collar chart, the pull-back of the metric $g$
is written as 
$e^{2\phi_i}g_E$
near $\partial\D$, where $g_E$ is the Euclidean metric on $\D$. Extend
$\phi_i$ smoothly to $\D$ and set
\[
g_i=e^{2\phi_i}g_E .
\]
By the locality of the full symbol of the Dirichlet-to-Neumann operator
\cite{LU1989}, the full symbol of $\Lambda_{M,g}$ along $M_i$ agrees, under this
collar identification, with that of $\Lambda_{\d \D ,g_i}$ along $\partial\D$. The
same holds for the tangential operators $D_{M,g}$ and $D_{\d \D,g_i}$.

Let $b_i:=e^{-\phi_i}|_{\partial\mathbb D} $. We have
\[
\Lambda_{\d \D, g_i}=b_i\Lambda_{\partial\mathbb D,g_E},
\qquad
D_{\d \D, g_i}=b_iD_{\partial\mathbb D,g_E}.
\]
Thus, after identifying $a|_{M_i}$ with its pullback to $\partial\D$ and
setting
\[
c:=(a|_{M_i})b_i,
\]
it suffices to compare the full symbols of
\[
(c\Lambda_{\partial\D,g_E})^2
\quad\text{and}\quad
(cD_{\partial\D,g_E})^2 .
\]

By \cite{ED93},
\[
\sigma_{\mathrm{full}}(\Lambda_{\partial\mathbb D,g_E})=|\xi|,
\qquad
\sigma_{\mathrm{full}}(D_{\partial\mathbb D,g_E})=\xi .
\]
Using the composition formula for pseudodifferential operators $P,Q$,
\[
\sigma_{\mathrm{full}}(P\circ Q)(x,\xi)\sim\sum_{\omega}\d_{\xi}^\omega(\sigma_{\mathrm{full}}(P)) D_x^\omega(\sigma_{\mathrm{full}}(Q)),
\]
we get
\[
\sigma_{\mathrm{full}}\bigl((c\Lambda_{\partial\mathbb D,g_E})^2\bigr)
=c^2\xi^2+c(Dc)\xi
=\sigma_{\mathrm{full}}\bigl((cD_{\partial\mathbb D,g_E})^2\bigr).
\]
Hence the two operators have the same full symbol along $M_i$. Repeating the
argument on each boundary component gives the result.
\end{proof}

Given any positive elliptic pseudodifferential operator $A\in\Psi^d(M)$  with $d>0$, we will denote the  $j$-th eigenvalue of $A$ by $\sigma_j(A,M)$. 
As a consequence of Lemma \ref{symbols},  the eigenvalues of both $\Lambda_g$ and $|D_g|$ admit the following asymptotic formula
\cite{GIROUARD_PARNOVSKI_POLTEROVICH_SHER_2014}, 
\begin{equation}\label{eq::asymptoticformula}
	\sigma_j(\La_g,M)=\sigma_j(|D_g|,M)+O(j^{-\infty}).
\end{equation}
It is well known that the total length of the boundary 
\[L_g(M)=L_g(M_1)+L_g(M_2)+\cdots +L_g(M_{n})\] 
can be obtained from the Steklov spectrum of $\Omega$. Indeed, the coefficients in the asymptotic expansion (as $t$ tends to zero) of the heat trace  $\Tr e^{-\Lambda_gt}$ give $L_g(M)$. Moreover, by using \eqref{eq::asymptoticformula} and Dirichlet’s theorem on simultaneous approximation, Girouard, Parnovski, Polterovich and Sher proved that the number of connected components of $M$ and the length of each component $L_g(M_i)$ are also Steklov spectral invariants \cite{GIROUARD_PARNOVSKI_POLTEROVICH_SHER_2014}. 

\subsection{Determinants and zeta invariants of $\Lambda_g$}\label{defzetainvariant}
For any positive elliptic pseudodifferential operator $A\in \Psi^d(M)$,   its complex powers can be defined via the spectral theorem as
\begin{equation}\label{complexpowerAs}
A^s:=\int_{\sigma(A)\setminus \{0\}} \lambda^sdE(\lambda),
\end{equation}
where $E(\lambda)$ denotes the projection-valued  spectral family associated with $A$, and $\sigma(A)\subset \mathbb{R}_{\geq 0}$ is the spectrum of $A$. This definition is consistent with the classical construction of complex powers for more general pseudodifferential operators introduced by Seeley \cite{Seeley1967}, which is based on contour integrals of the resolvent. For non-invertible $A$, the definition formula  \eqref{complexpowerAs} allows $0$ to remain in the spectrum and interprets $A^s$ as acting trivially on $\ker A$.

Now consider the \emph{spectral zeta function} associated with the DtN operator $\Lambda_g$, which is defined as  
\begin{equation}
    \zeta_{M,g}(s):=\Tr \Lambda_g^{-s}=\sum_{\sigma_j(\La_g,M)>0} \big(\sigma_j(\La_g,M)\big)^{-s}.
\end{equation}
We will simply use the notation $\zeta_M$ when there is no ambiguity. 
It is well-known that $\zeta_{M,g}(s)$ is absolutely convergent and therefore analytic in the region $\{\Re s>1\}$. It can also be extended to a meromorphic function on $\C$ with a unique simple pole at $s=1$. In particular, $\zeta_{M,g}(s)$ is analytic at $s=0$, and the \emph{(zeta-regularized) determinant} of $\La_g$ is defined by
\begin{equation}\label{def::det}
    {\det}'\La_g:=\exp (-\zeta_{M,g}'(0)).
\end{equation}

If  we fix a reference surface $(\Omega,g)$ with boundary $M$, each surface conformal to $(\Omega, g)$ is isometric to $(\Omega,h)$ for some metric $h=e^{2\phi}g$, $\phi\in C^\infty(\Omega;\R)$. Setting $a=e^{-\phi}|_{M}>0$, we have
\[
\Lambda_h=a\Lambda_g,\quad D_h=aD_g.
\]

In the simply connected case, the values of the spectral zeta function at negative even integers are referred to in \cite{Malkovich2015} as \emph{zeta invariants} of the Steklov spectrum. The result below extends the corresponding formulas to general compact surfaces with boundary.
\begin{proposition}\label{traceidentities}
   With the notation above, after meromorphic continuation, the following identity holds for all
$s\in \mathbb C$:
\begin{align*}
\zeta_{M,h}(s)-2\zeta_{R}(s)\sum_{i=1}^n\left(\frac{2\pi}{L_h(M_i)}\right)^{-s}&=\Tr (\La_h^{-s}-|D_h|^{-s})\\
&=\Tr \left(\left(a^{\frac12}\La_ga^{\frac12}\right)^{-s}-\left|a^{\frac12}D_{g}a^{\frac12}\right|^{-s}\right),
\end{align*}
where $\zeta_R(s)$ denotes the Riemann zeta function. 
In particular, for each $m\in\Z_+$, one has
\begin{equation}\label{generalzeta-2m}
	\zeta_{M,h}(-2m)=\Tr\left((a\La_g)^{2m}-(aD_g)^{2m} \right),
\end{equation} and 
\begin{equation}\label{generalzeta-1}
    \zeta_{M,h}(-1)=-\frac{\pi}{3}\sum_{i=1}^n \frac1{L_h(M_i)}+\Tr \left(a^{\frac12}\La_ga^{\frac12}-|a^{\frac12}D_{g}a^{\frac12}|\right).
\end{equation} 
\end{proposition}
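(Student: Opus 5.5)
The proof splits into three parts: compute the spectrum of $|D_h|$ explicitly, establish a unitary equivalence that converts $(\La_h,D_h)$ into the symmetrized operators, and then evaluate at $s=-2m$ and $s=-1$ using Lemma \ref{symbols}.

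\textbf{Spectrum of $|D_h|$.} On each component $M_i$, $D_h=\frac{1}{\sqrt{-1}}\d_\tau$ with $\tau$ the $h$-arclength. Its eigenfunctions are $e^{2\pi\sqrt{-1}k\tau/L_h(M_i)}$, $k\in\Z$. Hence the nonzero spectrum of $|D_h|$ on $M_i$ is $\{2\pi|k|/L_h(M_i)\}_{k\neq 0}$, each value having multiplicity two. This gives
\[
\Tr|D_h|^{-s}=2\zeta_R(s)\sum_i\left(\frac{2\pi}{L_h(M_i)}\right)^{-s}\quad\text{for }\Re s>1,
\]
and this extends meromorphically. The first equality then holds for $\Re s>1$, where both traces converge. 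The difference $\La_h^{-s}-|D_h|^{-s}$ is trace class for all $s$: by \eqref{eq::asymptoticformula} the eigenvalues differ by $O(j^{-\infty})$, and since $\dim\ker\La_h=\dim\ker D_h=n$, the nonzero eigenvalues can be paired. So $\sum_j(\sigma_j(\La_h)^{-s}-\sigma_j(|D_h|)^{-s})$ converges absolutely and is entire in $s$. The first identity therefore holds on all of $\C$ by uniqueness of analytic continuation.

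\textbf{Unitary equivalence.} $\La_h=a\La_g$ is self-adjoint on $L^2(M,dL_h)$, where $dL_h=a^{-1}dL_g$. The multiplication map $U f=a^{-1/2}f$ from $L^2(dL_g)$ to $L^2(dL_h)$ is unitary and satisfies
\[
U^{-1}\La_h U=a^{1/2}\La_g a^{1/2},\qquad U^{-1}D_hU=a^{1/2}D_ga^{1/2}.
\]
Functional calculus commutes with this conjugation, so the spectra, and hence the traces, agree. This gives the second equality.

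\textbf{Evaluation at negative integers.} For $s=-2m$ the operators are differential-like, and the power is just an ordinary composition:
\[
\Tr(\La_h^{2m}-|D_h|^{2m})=\Tr((a\La_g)^{2m}-(aD_g)^{2m}).
\]
The kernel contributes nothing, since $0^{2m}=0$ and the kernels are projected out. This difference is trace class because Lemma \ref{symbols} says $(a\La_g)^2$ and $(aD_g)^2$ have the same full symbol. Consequently, writing
\[
(a\La_g)^{2m}-(aD_g)^{2m}=\sum_j P^{j}(P-Q)Q^{m-1-j}\quad\text{with }P=(a\La_g)^2,\ Q=(aD_g)^2,
\]
each term is smoothing. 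The one subtlety is that the Riemann zeta term vanishes, since $\zeta_R(-2m)=0$, which gives \eqref{generalzeta-2m}. Here I need the entire function from the second step evaluated at $s=-2m$ to coincide with this trace. This holds because for $s$ a negative integer the eigenvalue sum $\sum_j(\sigma_j^{2m}-\tilde\sigma_j^{2m})$ is exactly the trace of the smoothing difference. It would be cleaner to verify this with a common eigenbasis argument via the unitarily equivalent pair, or by noting that the trace of a smoothing operator equals the sum of eigenvalue differences when the two operators are self-adjoint with $O(j^{-\infty})$-close spectra. This step, identifying the analytically continued trace with the trace of the difference operator, is the main obstacle. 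I would handle it through Lidskii-type arguments applied to $A^{k}-B^{k}$, using that both operators are self-adjoint and their ordered eigenvalues converge rapidly.

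For $s=-1$, use $\zeta_R(-1)=-1/12$, so that
\[
2\zeta_R(-1)\sum_i\frac{2\pi}{L_h(M_i)}=-\frac{\pi}{3}\sum_i\frac{1}{L_h(M_i)}.
\]
The same identification, now applied to the symmetrized first-order operators, yields \eqref{generalzeta-1}.
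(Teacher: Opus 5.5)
There is a genuine gap, and it sits in the step you yourself flag as the main obstacle. It originates in your first part. The inference ``the eigenvalues of $\La_h$ and $|D_h|$ differ by $O(j^{-\infty})$, hence $\La_h^{-s}-|D_h|^{-s}$ is trace class'' is invalid. Closeness of spectra says nothing about the difference of the operators: a self-adjoint $A$ and $UAU^{*}$ have identical spectra, yet $A-UAU^{*}$ need not even be bounded.

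What your argument actually proves is that the continued difference $\zeta_{M,h}(s)-\zeta_{|D_h|}(s)$ equals the entire series $\sum_j(\sigma_j(\La_h)^{-s}-\sigma_j(|D_h|)^{-s})$. The proposition instead asserts equality with the operator trace $\Tr(\La_h^{-s}-|D_h|^{-s})$ for every $s\in\C$, and it is this form that makes \eqref{generalzeta-2m} and \eqref{generalzeta-1} immediate.

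The missing idea is the operator-level one used in the paper:
\begin{enumerate}
\item Lemma \ref{symbols} gives $\sigma_{\mathrm{full}}(\La_h^2)=\sigma_{\mathrm{full}}(D_h^2)$. Taking positive square roots of these elliptic operators gives $\sigma_{\mathrm{full}}(\La_h)=\sigma_{\mathrm{full}}(|D_h|)$.
\item Complex powers are determined by the full symbol modulo $\Psi^{-\infty}$, so $\La_h^{-s}-|D_h|^{-s}\in\Psi^{-\infty}(M)$ for every $s$, and its trace is entire in $s$.
\item This trace agrees with the zeta difference for $\Re s>1$, hence on all of $\C$. One then simply evaluates at $s=-2m$ and $s=-1$.
\end{enumerate}

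Your fallback can rescue the two special values, but only partially. The identity $\Tr(A-B)=\sum_j(\lambda_j(A)-\lambda_j(B))$ (ordered eigenvalues) does hold for self-adjoint $A,B$ bounded below with discrete spectrum, e.g. via Krein's spectral shift function. Its hypothesis, however, is that $A-B$ is trace class, not that the spectra are $O(j^{-\infty})$-close. That trace-class property must itself come from the symbol calculus. At $s=-1$ it requires the square-root step $\sigma_{\mathrm{full}}(\La_h)=\sigma_{\mathrm{full}}(|D_h|)$, which telescoping Lemma \ref{symbols} does not provide. The fallback also cannot give the identity for non-real $s$, where the operators are not self-adjoint.

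There are two further errors.
\begin{itemize}
\item $\ker\La_h$ is one-dimensional, not $n$-dimensional. If $\La_h f=0$, the harmonic extension has zero Dirichlet energy and so is a global constant on the connected surface. By contrast, $\ker D_h$ consists of the locally constant functions and has dimension $n$. Hence $n-1$ small positive Steklov eigenvalues are paired with zeros of $|D_h|$. This only adds finitely many entire terms, but your pairing statement is false as written.
\item Since $dL_h=a^{-1}dL_g$, the unitary $L^2(M,g)\to L^2(M,h)$ is multiplication by $a^{1/2}$, not $a^{-1/2}$. With $U=a^{-1/2}$, $U$ is not isometric, and $U^{-1}\La_hU=a^{3/2}\La_g a^{-1/2}$ rather than $a^{1/2}\La_g a^{1/2}$.
\end{itemize}
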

\begin{proof}
By  Lemma \ref{symbols},  $\sigma_{\mathrm{full}}(\Lambda_h^2)=\sigma_{\mathrm{full}}(D_h^2)$. So $\sigma_{\mathrm{full}}(\Lambda_h)=\sigma_{\mathrm{full}}(|D_h|)$. As a result, for any $s\in\C$, the operator \[\La_h^{-s}-|D_h|^{-s}\in \Psi^{-\infty}(M)\] is a smoothing operator and thus a trace class operator. For $\operatorname{Re}s$ sufficiently large, the definition of spectral zeta function gives
\begin{equation*}
    \zeta_{M,h}(s)-\zeta_{|D_h|}(s)=\Tr (\La_h^{-s}-|D_h|^{-s}).
\end{equation*}
Since the right-hand side is an entire function of $s$, the same identity holds for all
$s\in\mathbb C$ after analytic continuation.

A direct computation shows that  the eigenvalues of the operator $D_h$ are precisely the numbers 
\[
\frac{2\pi k}{L_h(M_i)}, \qquad k \in \mathbb Z, i=1,\dots,n
\] 
with the corresponding eigenfunctions (see \cite[Lemma 2.1]{Sharafutdinov18})
\begin{equation*}
        {E}_{i,k}(\tau)=\begin{cases}
            \exp{\left(\frac{2\pi k\sqrt{-1}}{L_{h}(M_i)}\int_0^{\tau}a^{-1}ds_g\right)}, & \text{on }M_i,\\
            0,& \text{else.}
        \end{cases}
\end{equation*}
%Moreover, these functions $\{{E}_{i,k}\}$ form an orthogonal basis for $L^2(M, h)$ according to \cite[Lemma 2.1]{Sharafutdinov18}. 
Therefore, the spectral zeta function associated with the operator $|D_h|$ is given by
\begin{equation*}
    \zeta_{|D_h|}(s)=2\zeta_{R}(s)\sum_{i=1}^{n}\left(\frac{2\pi}{L_h(M_i)}\right)^{-s}
\end{equation*}
for each $s\in \C$. This proves the first equality. 

Since multiplication by $a^{\frac12}$ is a unitary operator from $L^2(M,g)$ to $L^2(M,h)$, it follows that, whenever an operator $A_h$ is self-adjoint
on $L^2(M,h)$, the conjugated operator 
$A_g:=a^{-\frac12}A_ha^{\frac12}$ is self-adjoint on $L^2(M,g)$. Moreover, their spectral families satisfy
\[
E_{A_g}(\lambda)=a^{-\frac12}E_{A_h}(\lambda)a^{\frac12}.
\]
Therefore, by the spectral theorem, for any Borel function $f$ for which
the functional calculus is defined,
\[
f(A_g)=a^{-\frac12}f(A_h)a^{\frac12}.
\]
Applying this to $\Lambda_h$ and $|D_h|$, we obtain
\[
a^{-\frac12}\La_h^{-s}a^{\frac12}=\left(a^{\frac12}\La_ga^{\frac12}\right)^{-s}\quad\text{and}\quad a^{-\frac12}|D_h|^{-s}a^{\frac12}=\left|a^{\frac12}D_{g}a^{\frac12}\right|^{-s}.
\]
Hence
\[
\Tr (\La_h^{-s}-|D_h|^{-s})
=\Tr \left(\left(a^{\frac12}\La_ga^{\frac12}\right)^{-s}-\left|a^{\frac12}D_{g}a^{\frac12}\right|^{-s}\right).
\]

In particular, at the trivial zeros $s = -2m$ ($m \in \Z_+$) of the Riemann zeta function $\zeta_R$, we have 
\begin{equation*}
	\zeta_{M,h}(-2m)=\Tr (\La_h^{2m}-D_h^{2m})=\Tr\left((a\La_g)^{2m}-(aD_g)^{2m} \right).
\end{equation*}
 Finally, using $\zeta_R(-1)=-\frac{1}{12}$, we obtain
\begin{equation*}
    \zeta_{M,h}(-1)=-\frac{\pi}{3}\sum_{i=1}^n \frac1{L_h(M_i)}+\Tr \left(a^{\frac12}\La_ga^{\frac12}-|a^{\frac12}D_{g}a^{\frac12}|\right).
\end{equation*} 
\end{proof}

\subsection{Conformal invariance of the  determinant}\label{remark}
By definition, the determinant of an operator is a spectral invariant. An important
feature of the DtN operator $\Lambda_g$ on a surface is that, after a normalization by the boundary length, the determinant ${\det}'\Lambda_g$ becomes conformally invariant. This was established by Guillarmou and Guillopé.  
\begin{proposition}[\cite{8180465}]\label{prop::zetaconformalinv}
    Let $\Omega$ be a compact connected surface with smooth boundary $M$. For any two conformally related metrics  $g$, $h$ on $\Omega$, we have
    \begin{equation}\label{eq::zetaconformalinv}
	\frac{{\det}' \La_g}{L_g(M)}=\frac{{\det}' \La_{h}}{L_{h}(M)}.
\end{equation}
\end{proposition}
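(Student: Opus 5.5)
We plan to prove the identity by a Polyakov-type variational argument along a one-parameter conformal family. Write $h=e^{2\phi}g$ and set $g_t=e^{2t\phi}g$ for $t\in[0,1]$. With $a_t=e^{-t\phi}|_M$ we have $\Lambda_{g_t}=a_t\Lambda_g$. It suffices to show that
\[
\frac{d}{dt}\Bigl(-\zeta'_{M,g_t}(0)-\log L_{g_t}(M)\Bigr)=0 .
\]
Put $f=\phi|_M$, so that $\dot\Lambda_t=-f\Lambda_{g_t}$. For $\Re s$ large, the standard variation formula for complex powers gives
\[
\frac{d}{dt}\zeta_{M,g_t}(s)=-s\Tr\bigl(\dot\Lambda_t\Lambda_t^{-s-1}\bigr)=s\Tr\bigl(f\Lambda_t^{-s}\bigr).
\]
Here $\Lambda^{-s}$ acts trivially on $\ker\Lambda_t$, and $\ker\Lambda_t$ consists of the constants, since $\Omega$ is connected. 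We then continue meromorphically in $s$ and differentiate at $s=0$.

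The next step is to identify the local zeta function $\zeta(s;f)=\Tr(f\Lambda_t^{-s})$ near $s=0$. Write $P_0$ for the orthogonal projection onto constants in $L^2(M,g_t)$. By Seeley's theory, $\Tr(f(\Lambda_t+P_0)^{-s})$ is regular at $s=0$, and its value there is $\int_M f\,(\text{local density})-\Tr(fP_0)$. The density comes from the $\xi^{-1}$ symbol term, which vanishes for a first order operator on a one-dimensional manifold whose full symbol agrees with that of $|D|$. This agreement is exactly what Lemma \ref{symbols} gives: $\sigma_{\mathrm{full}}(\Lambda_t)=\sigma_{\mathrm{full}}(|D_{g_t}|)$, and $|D|$ has symbol $|\xi|$ with no lower order terms in arclength coordinates. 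So the local contribution at $s=0$ is $0$.

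Removing the zero mode contributes $-\Tr(fP_0)=-\frac{1}{L_{g_t}(M)}\int_M f\,ds_{g_t}$. Hence
\[
\frac{d}{dt}\zeta'_{M,g_t}(0)=\zeta(0;f)=-\frac{\int_M f\,ds_{g_t}}{L_{g_t}(M)} .
\]
This is the ``zero-mode correction'' the paper announces as Proposition \ref{prop::zero-mode-correction}. On the other side, $\frac{d}{dt}\log L_{g_t}(M)=\frac{\int_M f\,ds_{g_t}}{L_{g_t}(M)}$ because $ds_{g_t}=e^{tf}ds_g$. The two derivatives cancel in $-\zeta'-\log L$, and integrating from $t=0$ to $t=1$ yields \eqref{eq::zetaconformalinv}.

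The main obstacle is justifying the vanishing of the local term rigorously. Concretely, one must show that the coefficient of $s^0$ in $\Tr(f\Lambda_t^{-s})$, computed via the resolvent parametrix, is determined entirely by the symbol and vanishes for $|\xi|$. The cleanest route is Proposition \ref{traceidentities}, applied with $f$ inserted. The difference $f(\Lambda_t^{-s}-|D_t|^{-s})$ is smoothing and entire in $s$, and $s\cdot(\text{entire})$ vanishes at $0$; differentiating in $s$ at $0$ therefore contributes only its value, which is $\Tr\bigl(f(P_{\ker|D|}-P_{\ker\Lambda})\bigr)$ coming from the kernels. So we reduce to the explicit operator $|D_{g_t}|$. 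Its eigenfunctions $E_{i,k}$ are known, and a direct Fourier computation gives $\Tr(f|D_t|^{-s})=2\zeta_R(s)\sum_i(\cdots)$ plus kernel terms. Since $\zeta_R(0)=-\tfrac12$, the pieces combine to $\zeta(0;f)=-\frac{1}{L}\int_M f$. The kernel of $|D_t|$ (locally constant functions) differs from that of $\Lambda_t$ when $n>1$, so this bookkeeping must be done carefully.
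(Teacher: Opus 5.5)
Your proof is correct. Its skeleton matches the paper's, but the key step is handled differently.

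\textbf{Where you agree with the paper.} The paper proves a general Polyakov-type formula (Proposition \ref{prop::zero-mode-correction}) for conformally covariant operators of bidegree $(b_1,b_2)$, with a Gram-determinant correction for the kernel. Specialized to $\Lambda$ (so $b_1=1$, $b_2=0$, kernel the constants, Gram determinant $L_g(M)$), it is exactly your variation of $\zeta$ together with your zero-mode bookkeeping.

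\textbf{How the paper kills the local term.} It quotes the Paycha--Scott formula $\Tr(\dot w A^{-s})|_{s=0}=-\frac1\alpha\int\dot w\operatorname{res}_x(\log A)-\Tr(\dot w\Pi)$. It then shows $\operatorname{res}_x(\log\Lambda_{g_t})=\frac12\operatorname{res}_x(\log D_{g_t}^2)=0$ using a parity lemma for odd-class operators.

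\textbf{Your first route.} You replace the parity lemma with a more direct remark. In $g_t$-arclength coordinates the full symbol of $\Lambda_{g_t}$ is exactly $|\xi|$ modulo $S^{-\infty}$, and the transition maps are translations, which preserve $|\xi|$. So there is simply no degree $-1$ term.

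\textbf{Your second route.} This one avoids residues altogether. By Lemma \ref{symbols}, $f(\Lambda_t^{-s}-|D_{g_t}|^{-s})$ is an entire trace-class family. Its value at $s=0$ is $f(P_{\ker|D_{g_t}|}-P_{\ker\Lambda_t})$; to justify this, write $A^{-s}=(A+P_A)^{-s}-P_A$ with $(A+P_A)^{0}=I$. Because $|E_{i,k}|=1$, you get exactly
\[
\Tr(f|D_{g_t}|^{-s})=2\zeta_R(s)\sum_i(2\pi/L_i)^{-s}L_i^{-1}\int_{M_i}f\,ds_{g_t}.
\]
At $s=0$ the factor $2\zeta_R(0)=-1$ cancels the per-component projections, leaving $-L^{-1}\int_M f\,ds_{g_t}$. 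I checked this bookkeeping and it closes.

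\textbf{What each approach buys.} The paper's method is general: it works in any dimension for any conformally covariant classical operator, and it recovers the classical Polyakov formula. Yours is tied to a one-dimensional boundary and to an operator equal to $|D|$ modulo smoothing. In exchange it is elementary and self-contained given Proposition \ref{traceidentities}. It also shows transparently how the mismatch between the locally constant kernel of $|D_{g_t}|$ and the constant kernel of $\Lambda_{g_t}$ produces exactly the normalization by $L_g(M)$.

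\textbf{Two small slips, neither affecting the conclusion.}
\begin{enumerate}
\item Since $(\Lambda_t+P_0)^{-s}=\Lambda_t^{-s}+P_0$, the value of $\Tr(f(\Lambda_t+P_0)^{-s})$ at $s=0$ is the local term alone. It is $\Tr(f\Lambda_t^{-s})$ that carries the $-\Tr(fP_0)$.
\item $\Tr(f|D_{g_t}|^{-s})$ itself has no kernel terms. They enter only through $\Tr(f(P_{\ker|D_{g_t}|}-P_{\ker\Lambda_t}))$.
\end{enumerate}
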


Equivalently, the value of ${\det}' \La_g\big/L_g(M)$ depends only on the conformal class of the metric. This invariance allows us to analyze Steklov isospectral problems on surfaces through conformal transformations, thereby reducing the analysis to more convenient representatives within the same conformal class.

The original proof of Proposition \ref{prop::zetaconformalinv} utilizes the fact that  $\La_g$ can be identified with the scattering operator $S(1)$ associated with a conformally compact asymptotically hyperbolic surface (obtained by Mazzeo--Taylor uniformization \cite{Mazzeo2002}). The conformal variation of $\det{}'\Lambda_g$ is then computed by the Paycha--Scott formula for the Kontsevich--Vishik canonical trace \cite{PaychaScott2007,KontsevichVishik1995}.

\subsection{A  Polyakov-type formula and a proof of Proposition \ref{prop::zetaconformalinv}}

Since Proposition \ref{prop::zetaconformalinv} is crucial to this paper, we provide an alternative proof below for the sake of self-containedness. The proof is based on a more general Polyakov-type formula which may be of independent interest in other contexts.
 
Let $M$ be a closed $m$-dimensional manifold.Suppose that an operator $A_g$ on $M$ is associated with each Riemannian metric $g$. We say $A_g$ is {\bf conformally covariant of bidegree $(b_1, b_2)$} if    there exist real constants $b_1$ and $b_2$ such that for every $w\in C^\infty(M;\mathbb R)$,
\begin{equation*}
	A_{e^{2w}g}=e^{-b_1w}A_g e^{b_2w}.
\end{equation*} 
For a conformal path $g_t=e^{2w_t}g$, it is useful to introduce the
auxiliary metric
\begin{equation*}
\widehat g_t:=e^{2\frac{b_1+b_2}{m}w_t}g.
\end{equation*}
Then
\begin{equation*}
dV_{\widehat g_t}=e^{(b_1+b_2)w_t}dV_g.
\end{equation*}
Thus, if $A_g$ is self-adjoint on $L^2(M,g)$, then $A_{g_t}$ is
self-adjoint on $L^2(M,\widehat g_t)$.

In what follows,  we will fix
linearly independent functions $u_1^g,\ldots,u_N^g$ spanning $\ker A_g$.
By conformal covariance,
\begin{equation*}
\ker A_{g_t}=e^{-b_2w_t}\ker A_g.
\end{equation*}
Hence we set
\begin{equation*}
u_i^{g_t}:=e^{-b_2w_t}u_i^g,
\qquad i=1,\ldots,N.
\end{equation*}
These functions form a basis of $\ker A_{g_t}$. We define the corresponding
Gram matrix by
\begin{equation*}
G_{g_t}
:=
\big((u_i^{g_t},u_j^{g_t})_{L^2(M,\widehat g_t)}\big)_{i,j=1}^N.
\end{equation*}
Equivalently,
\begin{equation*}
(G_{g_t})_{ij}
=
\int_M e^{(b_1-b_2)w_t}u_i^g u_j^g\,dV_g.
\end{equation*}
  
  Now we may state the  Polyakov-type formula, which  is a mild zero-mode extension of the Paycha--Rosenberg conformal anomaly formula \cite[Corollary 2.6]{PaychaRosenberg2006} for invertible operators. In the case of the Laplace--Beltrami operator on closed surfaces, it recovers the classical Polyakov formula \cite{Polyakov1981}, as used for instance by Osgood, Phillips and Sarnak \cite{OSGOOD1988148}.
\begin{proposition}\label{prop::zero-mode-correction}
Let $A_g$ be a non-negative self-adjoint elliptic classical
pseudodifferential operator of order $\alpha>0$ on a closed $m$-dimensional
manifold $(M,g)$. Assume that the family $A_g$ is conformally covariant of
bidegree $(b_1,b_2)$. 
Then for any smooth path of metrics $g_t:=e^{2w_t}g$ connecting $g_0=g$ and $g_1=e^{2w}g$,  
\begin{equation*}
\log\frac{{\det}' A_{e^{2w}g}}{\det G_{e^{2w}g}}
- \log\frac{{\det}' A_g}{\det G_g}=\frac{b_1-b_2}{\alpha}
\int_0^1\int_M\dot w_t\operatorname{res}_x(\log A_{g_t})\,dt.
\end{equation*}
\end{proposition}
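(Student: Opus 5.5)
Differentiate in $t$ and integrate from $0$ to $1$. To avoid working with a moving Hilbert space, conjugate to a fixed one. Multiplication by $e^{\frac{b_1+b_2}{2}w_t}$ is unitary from $L^2(M,\widehat g_t)$ to $L^2(M,g)$. So put
\[
B_t:=e^{\frac{b_1+b_2}{2}w_t}A_{g_t}e^{-\frac{b_1+b_2}{2}w_t}=e^{\frac{b_2-b_1}{2}w_t}A_g\,e^{\frac{b_2-b_1}{2}w_t}.
\]
This is self-adjoint on $L^2(M,g)$ and isospectral to $A_{g_t}$, so ${\det}'A_{g_t}={\det}'B_t$. Write $c=\frac{b_2-b_1}{2}$. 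Then
\[
\dot B_t=c\,(\dot w_tB_t+B_t\dot w_t),
\]
and $\ker B_t=e^{c w_t}\ker A_g$. Let $\Pi_t$ be the orthogonal projection onto $\ker B_t$ in $L^2(M,g)$.

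First I compute the variation of the zeta function for $\Re s$ large. Using the spectral representation and cyclicity of the trace on the range of $1-\Pi_t$,
\[
\frac{d}{dt}\Tr\bigl(B_t^{-s}(1-\Pi_t)\bigr)=-s\Tr\bigl(\dot B_tB_t^{-s-1}(1-\Pi_t)\bigr)=-2cs\,\Tr\bigl(\dot w_tB_t^{-s}(1-\Pi_t)\bigr).
\]
The terms involving $\dot\Pi_t$ drop out because $B_t^{-s}$ vanishes on the kernel and $\Pi_t\dot\Pi_t\Pi_t=0$. This is the place where care is needed to justify trace-class and differentiation under the trace.

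Next I write
\[
\Tr\bigl(\dot w_tB_t^{-s}(1-\Pi_t)\bigr)=\Tr(\dot w_tB_t^{-s})-\Tr(\dot w_t\Pi_t).
\]
Here $B_t^{-s}$ means the complex power with the kernel removed, so the full weighted zeta function $\zeta(\dot w_t,B_t;s)=\Tr(\dot w_tB_t^{-s})$ extends meromorphically. It has at most a simple pole at $s=0$, and the standard residue formula (Wodzicki/Kontsevich--Vishik) gives
\[
\operatorname{Res}_{s=0}\Tr(\dot w_tB_t^{-s})=-\frac1\alpha\int_M\dot w_t\operatorname{res}_x(\log B_t).
\]
Differentiating $-\zeta'(0)$, the factor $s$ kills the regular part. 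So $\frac{d}{dt}\log{\det}'B_t$ equals $2c\cdot(-\frac1\alpha)\int\dot w_t\operatorname{res}_x(\log B_t)$ minus the correction $2c\Tr(\dot w_t\Pi_t)$ coming from the finite-rank term (the derivative of $-s\cdot\text{const}$ at $0$). With $2c=b_2-b_1$ this gives
\[
\frac{d}{dt}\log{\det}'B_t=\frac{b_1-b_2}{\alpha}\int_M\dot w_t\operatorname{res}_x(\log B_t)+(b_1-b_2)\Tr(\dot w_t\Pi_t).
\]
The Wodzicki residue density of $\log$ is invariant under conjugation by multiplication operators up to divergence/exact terms. More precisely, $\log(fAf^{-1})=f\log(A)f^{-1}$, and the residue density of a conjugate is pointwise the same. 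So $\operatorname{res}_x(\log B_t)=\operatorname{res}_x(\log A_{g_t})$, with densities identified via $dV_g$; I would double-check the density identification here.

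Then I match the zero-mode term with the Gram determinant. With $v_i=e^{cw_t}u_i^g$, the matrix $G_{g_t}$ equals $\bigl((v_i,v_j)_{L^2(M,g)}\bigr)$ since $(b_1-b_2)w_t=-2cw_t$. Checking the sign conventions: $(G_{g_t})_{ij}=\int e^{(b_1-b_2)w_t}u_iu_j$. Note also $e^{-b_2w_t}u\cdot e^{\frac{b_1+b_2}{2}w_t}=e^{\frac{b_1-b_2}{2}w_t}u$, which conflicts with $e^{cw_t}$. So I will recheck which convention for $B_t$ matches. In any case, with $v_i$ the basis of $\ker B_t$ produced by this unitary,
\[
\frac{d}{dt}\log\det G=\Tr\bigl(G^{-1}\dot G\bigr)=(b_1-b_2)\Tr(\dot w_t\Pi_t),
\]
because $\dot v_i=\pm\frac{b_1-b_2}{2}\dot w_tv_i$ and $\Pi_t=\sum_{ij}(G^{-1})_{ji}v_i\langle\cdot,v_j\rangle$. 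This term exactly cancels the zero-mode contribution when taking $\log({\det}'/\det G)$. Integrating in $t$ gives the formula.

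The main obstacle is the analytic justification: the variation formula for ${\det}'$ with a nontrivial kernel, and the identification of the pole of the weighted zeta function with the residue of the logarithm. These I would take from Paycha--Rosenberg for the invertible case, applied to $B_t+\Pi_t$, with the finite-rank corrections handled as above.
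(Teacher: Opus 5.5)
Your overall route matches the paper's. You use the variation identity $\frac{d}{dt}\zeta(s)=-s(b_2-b_1)\Tr(\dot w_t A_{g_t}^{-s})$, evaluate at $s=0$ via a $\log$-residue formula with a zero-mode term, and cancel that term against $\frac{d}{dt}\log\det G_{g_t}$. Conjugating to the fixed space $L^2(M,g)$ by $B_t=e^{cw_t}A_ge^{cw_t}$ only repackages the paper's moving inner product on $L^2(M,\widehat g_t)$, and the identification $\operatorname{res}_x(\log B_t)=\operatorname{res}_x(\log A_{g_t})$ is fine.

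However, the step you call the main obstacle is wrong as written. You reach the correct formula only because two errors cancel.
\begin{itemize}
\item The residue is misidentified. $\operatorname{Res}_{s=0}\Tr(\dot w_tB_t^{-s})$ is not $-\frac1\alpha\int_M\dot w_t\operatorname{res}_x(\log B_t)$. By Wodzicki/Kontsevich--Vishik, the residue at $s=0$ of $\Tr(QB^{-s})$ is $\frac1\alpha\operatorname{res}(Q)$. This vanishes for the multiplication operator $Q=\dot w_t$, so the weighted zeta function is holomorphic at $s=0$.
\item The factor $s$ kills the pole, not the regular part. Write $F(s)=\Tr(\dot w_tB_t^{-s})=R/s+F_0+O(s)$. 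Then $\partial_s\bigl(sF(s)\bigr)|_{s=0}=F_0$, so $\frac{d}{dt}\log{\det}'B_t=(b_2-b_1)F_0$ sees only the constant Laurent coefficient. A residue $R$ would affect only $\frac{d}{dt}\zeta(0)$.
\end{itemize}

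The missing input is a formula for the \emph{value} at $s=0$. With the kernel-removed convention, $\Tr(\dot w_tB_t^{-s})$ is regular at $0$ with value $-\frac1\alpha\int_M\dot w_t\operatorname{res}_x(\log B_t)-\Tr(\dot w_t\Pi_t)$. This is exactly what the paper takes from Paycha--Scott. With it, your displayed formula for $\frac{d}{dt}\log{\det}'B_t$ follows correctly.

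Two smaller points need fixing.
\begin{itemize}
\item Your kernel convention is inconsistent. With the kernel-removed convention, $B_t^{-s}(1-\Pi_t)=B_t^{-s}$, so your split $\Tr(\dot w_tB_t^{-s}(1-\Pi_t))=\Tr(\dot w_tB_t^{-s})-\Tr(\dot w_t\Pi_t)$ is false. The split is true if $B_t^{-s}$ means $(B_t+\Pi_t)^{-s}$. In that case the zero-mode term comes from the split, and the value formula for the invertible operator $B_t+\Pi_t$ has no kernel term; note $\log(B_t+\Pi_t)$ is $\log B_t$ extended by $0$ on the kernel. Choose one convention; either gives the same answer.
\item You have a sign slip in the kernel: $\ker B_t=e^{-cw_t}\ker A_g$, not $e^{cw_t}\ker A_g$. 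Take $v_i=e^{\frac{b_1-b_2}{2}w_t}u_i^g$. Then $(v_i,v_j)_{L^2(M,g)}=(G_{g_t})_{ij}$ and $\dot v_i=\frac{b_1-b_2}{2}\dot w_tv_i$. This resolves your sign check and gives $\frac{d}{dt}\log\det G_{g_t}=(b_1-b_2)\Tr(\dot w_t\Pi_t)$, which cancels the zero-mode term as in the paper.
\end{itemize}
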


Here $\operatorname{res}_x$ denotes the local Guillemin--Wodzicki residue density
\cite{Guillemin1985,Wodzicki1987}; for the logarithmic case, see
\cite{Lesch1999,PaychaRosenberg2006}.

\begin{proof}
Since
\begin{equation*}
A_{g_t}=e^{-b_1w_t}A_g e^{b_2w_t},
\end{equation*}
differentiating gives
\begin{equation*}
\frac{dA_{g_t}}{dt}
=
-b_1\dot w_t A_{g_t}
+
b_2A_{g_t}\dot w_t .
\end{equation*}
We regard $A_{g_t}$ as a self-adjoint operator on $L^2(M,\widehat g_t)$.
Consider the spectral zeta function
\begin{equation*}
\zeta_{A_{g_t}}(s):=\operatorname{Tr}A_{g_t}^{-s}.
\end{equation*}
For $\operatorname{Re}s>0$ large,
\begin{equation*}
\frac{d}{dt}\zeta_{A_{g_t}}(s)
=
-s\operatorname{Tr}\left(
\frac{dA_{g_t}}{dt}A_{g_t}^{-s-1}
\right)=
-s(b_2-b_1)\operatorname{Tr}
\left(\dot w_tA_{g_t}^{-s}\right).
\end{equation*}
After meromorphic continuation and evaluation at $s=0$, this gives
\begin{equation*}
\frac{d}{dt}\log{\det}'A_{g_t}
=
(b_2-b_1)
\left.
\operatorname{Tr}\left(\dot w_tA_{g_t}^{-s}\right)
\right|_{s=0}^{\mathrm{mer}} .
\end{equation*}
Denote by $\Pi_{g_t}$ the $L^2(M,\widehat g_t)$-orthogonal projection onto
$\ker A_{g_t}$. Then by \cite[(0.14), or equivalently (0.15)]{PaychaScott2007}, we have
\begin{equation*}
\left.\operatorname{Tr}\left(\dot{w}_t A_{g_t}^{-s}\right)
\right|_{s=0}^{\mathrm{mer}}
=-\frac{1}{\alpha}\int_M \dot{w}_t\operatorname{res}_x\bigl(\log A_{g_t}\bigr)-\operatorname{Tr}(\dot{w}_t\Pi_{g_t}).
\end{equation*}
Combining the preceding identities yields
\begin{equation}\label{eq::varlogdet}
\frac{d}{dt}\log{\det}' A_{g_t}
=\frac{b_1-b_2}{\alpha}\int_M \dot{w}_t\operatorname{res}_x(\log A_{g_t})+(b_1-b_2)\operatorname{Tr}(\dot{w}_t\Pi_{g_t}).
\end{equation}

We now compute the derivative of the Gram determinant. By definition,
\begin{equation*}
(G_{g_t})_{ij}
=
\int_M e^{(b_1-b_2)w_t}u_i^g u_j^g\,dV_g.
\end{equation*}
Since $dV_{\widehat g_t}=e^{(b_1+b_2)w_t}dV_g$ we obtain
\begin{equation*}
\frac{d}{dt}(G_{g_t})_{ij}
=
(b_1-b_2)
(\dot w_tu_i^{g_t},u_j^{g_t})_{L^2(M,\widehat g_t)}.
\end{equation*}
It follows that
\begin{equation*}
\frac{d}{dt}\log\det G_{g_t}=\operatorname{Tr}\left(G_{g_t}^{-1}\frac{d}{dt}G_{g_t}\right) 
=
(b_1-b_2)
\sum_{i,j=1}^N
(G_{g_t}^{-1})_{ij}
(\dot w_tu_j^{g_t},u_i^{g_t})_{L^2(M,\widehat g_t)}.
\end{equation*}
On the other hand, the $L^2(M,\widehat g_t)$-orthogonal projection onto
$\ker A_{g_t}$ is given by
\begin{equation*}
\Pi_{g_t}f
=
\sum_{i,j=1}^N
u_i^{g_t}(G_{g_t}^{-1})_{ij}
(f,u_j^{g_t})_{L^2(M,\widehat g_t)} .
\end{equation*}
Therefore
\begin{equation*}
\operatorname{Tr}(\dot w_t\Pi_{g_t})
=
\sum_{i,j=1}^N
(G_{g_t}^{-1})_{ij}
(\dot w_tu_j^{g_t},u_i^{g_t})_{L^2(M,\widehat g_t)}.
\end{equation*}
Consequently,
\begin{equation*}
\frac{d}{dt}\log\det G_{g_t}
=
(b_1-b_2)\operatorname{Tr}(\dot w_t\Pi_{g_t}).
\end{equation*}
Combining this with \eqref{eq::varlogdet}, we obtain
\begin{equation*}
\frac{d}{dt}
\log\left(
\frac{{\det}'A_{g_t}}{\det G_{g_t}}
\right)
=
\frac{b_1-b_2}{\alpha}
\int_M\dot w_t\operatorname{res}_x(\log A_{g_t}).
\end{equation*}
Integrating from $t=0$ to $t=1$ completes the proof.
\end{proof}

\begin{remark}
The same proof applies to self-adjoint elliptic operators with finitely many negative eigenvalues, provided the determinant is defined by the zeta function associated with a fixed spectral cut on the non-zero spectrum.
\end{remark}

\begin{proof}[An alternative proof of Proposition \ref{prop::zetaconformalinv}]
For the DtN operator defined on the boundary $M$ of a surface, one has
\begin{equation*}
    \Lambda_{e^{2w}g}=e^{-w}\Lambda_g ,
\end{equation*}
so $b_1=1$, $b_2=0$, and $m=\dim M=1$. Moreover, by Lemma
\ref{symbols},
\begin{equation*}
    \operatorname{res}_x(\log \Lambda_{g_t}) = \frac12\operatorname{res}_x(\log D_{g_t}^2).
\end{equation*}
Since $D_{g_t}^2$ is a differential operator of order $2$, it is of
odd class. Hence, by the standard parity property for the classical part of the logarithm of an even-order odd-class elliptic operator (see, for instance, \cite[Lemma 2.13]{Braverman2009} and \cite[Lemma 2.3]{Guillarmou2009Krein}),
$\sigma_{-1}(\log D_{g_t}^2)$ is odd in $\xi$, and therefore its integral over
the unit cosphere vanishes. Thus $\operatorname{res}_x(\log D_{g_t}^2)=0$, and consequently
$\operatorname{res}_x(\log \Lambda_{g_t})=0$.
So Proposition \ref{prop::zero-mode-correction} implies that
\[
\frac{{\det}'\Lambda_g}{\det G_g}
\]
is conformally invariant.
Moreover $\ker\Lambda_g$ consists  of the constant functions. Taking the basis $1$ of the kernel gives
\[
G_g=\big(( 1,1)_{L^2(M,g)}\big)=(L_g(M)).
\]
Therefore Proposition
\ref{prop::zetaconformalinv} follows. 
\end{proof}

For conformally covariant elliptic differential operators on closed odd-dimensional manifolds, the conformal invariance  of the
determinant was proved by Rosenberg \cite{Rosenberg1987}.  See also Parker and  Rosenberg \cite{ParkerRosenberg1987} for the conformal Laplacian. For scattering operators $S(\lambda)$ in the even asymptotically hyperbolic setting with
odd-dimensional conformal infinity, Guillarmou proved the corresponding conformal invariance of the scattering determinant $\det S(\lambda)$ \cite{Guillarmou2009Krein}.

\section{Steklov spectral analysis on flat cylinders}\label{Section::OPS}

\subsection{The  Osgood--Phillips--Sarnak uniformization of annular surfaces}\label{uniformazation}

In their study of the extremal properties of the log-determinant of the Laplacian, Osgood, Phillips, and Sarnak established a new uniformization theorem \cite{OSGOOD1988148}, which classifies conformal classes of compact surfaces with non-positive Euler characteristic, both with and without boundary. 
Specifically, they proved that within any conformal class of Riemannian metrics on a compact surface with a non-positive Euler characteristic, the following uniform metrics exist uniquely up to multiplication by a constant:
\begin{enumerate}[(1)]
	\item 
For a closed surface, there is a unique constant curvature metric in that class.
\item For a surface with boundary, there are two distinct types of uniform metrics:
\begin{enumerate}[(i)]
	\item (Type I) a constant curvature metric with geodesic boundary;
	\item (Type II) a flat metric whose boundary has constant geodesic curvature.
\end{enumerate} 
\end{enumerate} 

Recall that $\mathcal{A}$ denotes the set of smooth annular surfaces. A standard model for $\mathcal{A}$ is given by the cylinders
\begin{equation}\label{el}
	\mathcal{C}_l:=S^1 \times [0,l] = \{(e^{i\theta},t)|\ \theta\in[0,2\pi),t\in[0,l]\}
\end{equation} 
with boundary $\Sigma=\Sigma_1 \cup \Sigma_2$, equipped with  the standard product metric  $g_0=dt^2+d\theta^2$.
 Note that $g_0$ itself can be regarded as both a type I and a type II uniform metric.

Each surface $S\in \mathcal{A}$ has vanishing Euler characteristic, i.e. $\chi(S)=0$. In view of the Gauss--Bonnet theorem and the  Osgood--Phillips--Sarnak  uniformization theorem mentioned above,  $S$ is conformally equivalent to a standard cylinder $(\mathcal{C}_l,g_0)$ for some parameter $l$.  In other words, one may  identify $S$ with  $(\mathcal{C}_l,g)$ for some
\begin{equation}\label{conformalmetricform} g=e^{2\phi}g_0, \phi\in C^\infty(\mathcal{C}_l;\mathbb{R}).
\end{equation}
For such a conformal metric $g$, the Gauss curvature $K_g$ of $\mathcal{C}_l$ is given by 
\begin{equation*}
	K_g=e^{-2\phi}(-\Delta_{g_0}\phi+K_{g_0}), 
\end{equation*}
while   the geodesic curvature $k_g$   of the boundary is 
\begin{equation*}
	k_g=e^{-\phi}(\partial_n\phi+k_{g_0}).
\end{equation*}

In particular, if $(\mathcal{C}_l,g)$ is flat and has geodesic boundary, then necessarily the conformal factor $\phi$ is a constant. Consequently, cylinders $(\mathcal{C}_l, g_0)$ with different values of $l$ belong to distinct conformal classes.

Moreover, let $\mathrm{Aut}(\mathcal{C}_l)$ denote the conformal automorphism group of $(\mathcal{C}_l,g_0)$. This group is rigid in the sense that every conformal self-map is an isometry; namely,
\[\mathrm{Aut}(\mathcal{C}_l)=\mathrm{Iso}(\mathcal{C}_l)\cong O(2)\times \mathbb Z_2.
\]
This differs from the simply connected case, where the unit disk admits a rich family of nontrivial conformal self-maps (M\"obius transformations). Indeed,
\[
\mathrm{Aut}(\mathbb{D})\cong \mathrm{PSL}(2,\mathbb{R}),
\]
which is non-compact.
This phenomenon in the annular setting explains why, unlike in \cite{JOLLIVET20181712}, no additional normalization procedure modulo certain conformal automorphisms is required in the compactness analysis in Section \ref{compactness}.

The following lemma quantifies such a  rigidity  via the first nonzero Steklov eigenvalue. Let $dV_g$ and $d\theta_g$ denote the area measure on $\mathcal{C}_l$ and the induced line measure on $\Sigma=\partial\mathcal{C}_l$ with respect to the metric $g$, respectively. Similarly, let $dV$ and $d\theta$ denote the corresponding measures associated with the flat metric $g_0$. Then
\[
dV_g = e^{2\phi}dV, \qquad d\theta_g = e^{\phi}d\theta.
\]
\begin{lemma}\label{rigidityphi}
     Let $g$ be a flat metric on $\mathcal{C}_l$ given by \eqref{conformalmetricform}. Denote by $\sigma_1:=\sigma_1(\Lambda_g,\Sigma)$ the first nonzero Steklov eigenvalue of $(\mathcal{C}_l,g)$. Then the Dirichlet energy of the conformal factor $\phi$ satisfies
    \begin{equation*}
    \int_{\mathcal{C}_l}|\nabla_{g_0} \phi|^2dV\leq\frac{1}{\sigma_1}\int_\Sigma k_g^2d\theta_g.
\end{equation*}
\end{lemma}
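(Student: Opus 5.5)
The plan is to use that flatness makes $\phi$ harmonic with respect to $g_0$, and then turn the Dirichlet energy of $\phi$ into a boundary integral that the Steklov variational characterization can control.

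\emph{Step 1 (harmonicity of $\phi$ and its normal derivative).} Since $K_g=0$ and $K_{g_0}=0$, the curvature formula gives $\Delta_{g_0}\phi=0$ in $\mathcal C_l$. The boundary of $(\mathcal C_l,g_0)$ is geodesic, so $k_{g_0}=0$ and $\partial_n\phi = e^{\phi}k_g$ on $\Sigma$, where $n$ is the $g_0$-unit outward normal.

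\emph{Step 2 (energy as a boundary integral).} By Green's formula,
\[
\int_{\mathcal C_l}|\nabla_{g_0}\phi|^2\,dV=\int_\Sigma \phi\,\partial_n\phi\,d\theta=\int_\Sigma \phi\, k_g\, d\theta_g,
\]
using $d\theta_g=e^{\phi}d\theta$. Replacing $\phi$ by $\phi-c$ for any constant $c$ leaves the left side unchanged, and the right side changes by $c\int_\Sigma k_g\,d\theta_g=c\int_\Sigma\partial_n\phi\,d\theta=0$, since $\phi$ is harmonic. So we may choose $c$ such that $\psi:=\phi-c$ satisfies $\int_\Sigma \psi\, d\theta_g=0$.

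\emph{Step 3 (Cauchy--Schwarz and the Steklov variational principle).} Cauchy--Schwarz on $L^2(\Sigma,d\theta_g)$ gives
\[
E:=\int|\nabla_{g_0}\phi|^2dV\le \|\psi\|_{L^2(\Sigma,g)}\|k_g\|_{L^2(\Sigma,g)}.
\]
Because the Dirichlet integral is conformally invariant in dimension two, $\int|\nabla_g\psi|^2dV_g=E$. The Rayleigh quotient characterization of $\sigma_1$ over functions with zero $g$-boundary mean then gives $\sigma_1\|\psi\|^2_{L^2(\Sigma,g)}\le E$. Hence $E\le (E/\sigma_1)^{1/2}\|k_g\|$, so $E\le \sigma_1^{-1}\int_\Sigma k_g^2\,d\theta_g$, which is the claim; the case $E=0$ is trivial.

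The only delicate point is the normalization in Step 2. The mean-zero condition must be imposed with respect to $d\theta_g$, not $d\theta$, so that the variational principle for $\Lambda_g$ applies. It is the vanishing of $\int_\Sigma k_g\,d\theta_g$, which follows from harmonicity (equivalently from Gauss--Bonnet with $\chi=0$), that makes this shift of $\phi$ by a constant legitimate.
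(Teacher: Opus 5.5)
Your argument is correct and is essentially the paper's proof. Both use harmonicity of $\phi$ to identify $\Lambda_g(\phi|_\Sigma)=\partial_{n(g)}\phi=k_g$, write the conformally invariant Dirichlet energy as the boundary quadratic form $(\Lambda_g\phi,\phi)_{L^2(\Sigma,g)}$, and bound it by the spectral gap. The only difference is packaging: the paper gets $(\Lambda_g\phi,\phi)\le\sigma_1^{-1}\|\Lambda_g\phi\|^2_{L^2(\Sigma,g)}$ directly from the spectral decomposition of $\Lambda_g$, where the constant mode contributes to neither side so no normalization is needed, whereas you reach the same inequality via Green's formula, Cauchy--Schwarz and the Rayleigh quotient after subtracting the $d\theta_g$-mean.
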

\begin{proof} 
    Since $g$ is flat, $\phi$ is a $g_0$-harmonic function and satisfies
    \[
    \d_{n(g)}\phi=k_g,\quad \text{on }\Sigma,
    \]
    where $n(g)$ is the outward normal vector with respect to the metric $g$, and we used the fact $e^{-\phi}\partial_n \phi =   \d_{n(g)}\phi$.
    According to the fact
    \[
    (\Lambda_{g}\phi,\phi)_{L^2(\Sigma,g)}=
(\Lambda^{\frac12}_{g}\phi,\Lambda^{\frac12}_{g}\phi)_{L^2(\Sigma,g)}
     \leq  \frac{1}{\sigma_1} (\Lambda^{\frac32}_{g}\phi,\Lambda^{\frac12}_{g}\phi)_{L^2(\Sigma,g)}
     =\frac{1}{\sigma_1}\|\Lambda_g\phi\|_{L^2(\Sigma,g)}^2
    \]
    and the conformal invariance of the Dirichlet energy in dimension two, we have
    \[
   \int_{\mathcal{C}_l}|\nabla_{g_0} \phi|^2dV= \int_{\mathcal{C}_l}|\nabla_{g} \phi|^2dV_g=(\Lambda_{g}\phi,\phi)_{L^2(\Sigma,g)}\leq\frac{1}{\sigma_1}\|\Lambda_g\phi\|_{L^2(\Sigma,g)}^2=\frac{1}{\sigma_1}\int_\Sigma k_g^2d\theta_g.
    \]
\end{proof}
As an application of the preceding lemma, we obtain the following estimate for the conformal factor, which is in
the spirit of the a priori estimates in \cite{ChangYang1989,ChangYang}, but is formulated here in the Steklov setting for flat annular surfaces.

\begin{proposition}\label{H1normupbd}
Let $g$ be a flat metric on $\mathcal{C}_l$ given by \eqref{conformalmetricform}. Denote by $\sigma_1:=\sigma_1(\Lambda_g,\Sigma)$ the first nonzero Steklov eigenvalue of $(\mathcal{C}_l,g)$. Then there exists a constant $C(l)>0$, depending only
on $l$, such that
\[
\|\phi\|_{H^1(\mathcal C_l,g_0)}
\leq C(l)\left[\left(\frac1{\sigma_1}\int_\Sigma k_g^2\,d\theta_g
\right)^{\frac12}+\frac1{\sigma_1}\int_\Sigma k_g^2\,d\theta_g+\left|\log\left(\frac{L_g(\Sigma)}{4\pi}\right)\right|
\right].
\]
\end{proposition}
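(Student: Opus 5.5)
Split $\phi=\bar\phi+\psi$, where $\bar\phi$ is the $g_0$-mean of $\phi$ over $\mathcal C_l$ and $\psi$ has mean zero. By the Poincaré inequality on $(\mathcal C_l,g_0)$, $\|\psi\|_{L^2}\le C(l)\|\nabla_{g_0}\phi\|_{L^2}$. Lemma \ref{rigidityphi} bounds the right side, so
\[
\|\psi\|_{H^1}\le C(l)E^{1/2},\qquad E:=\frac1{\sigma_1}\int_\Sigma k_g^2\,d\theta_g .
\]
It remains to bound $|\bar\phi|$, since $\|\phi\|_{H^1}\le \|\psi\|_{H^1}+|\bar\phi|\,(2\pi l)^{1/2}$. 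For this we use the boundary length. Since $L_g(\Sigma)=\int_\Sigma e^{\phi}\,d\theta$ and $\Sigma$ has $g_0$-length $4\pi$,
\[
\frac{L_g(\Sigma)}{4\pi}=e^{\bar\phi}\cdot\frac1{4\pi}\int_\Sigma e^{\psi}\,d\theta .
\]
Hence it suffices to bound $\bigl|\log\bigl(\frac1{4\pi}\int_\Sigma e^{\psi}d\theta\bigr)\bigr|$ in terms of $E^{1/2}+E$.

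The lower bound follows from Jensen's inequality: $\frac1{4\pi}\int_\Sigma e^\psi d\theta\ge \exp\bigl(\frac1{4\pi}\int_\Sigma\psi\,d\theta\bigr)$. The trace theorem gives $|\int_\Sigma\psi\,d\theta|\le C(l)\|\psi\|_{H^1}\le C(l)E^{1/2}$, so the logarithm is bounded below by $-C(l)E^{1/2}$.

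The upper bound is the main step. It needs an exponential integrability estimate for the trace of an $H^1$ function, that is, a boundary Moser--Trudinger inequality. The trace of $\psi$ lies in $H^{1/2}(\Sigma)$ with $\|\psi|_\Sigma\|_{H^{1/2}}\le C(l)\|\psi\|_{H^1}$. On circles one has the Lebedev--Milin/Moser--Trudinger type inequality
\[
\log\frac1{2\pi}\int_{S^1}e^{f}\le \frac1{2\pi}\int_{S^1} f+\frac1{4}\|f\|_{\dot H^{1/2}}^2 ,
\]
which can be obtained by extending $f$ harmonically to the disk and using the Chang--Yang/Osgood--Phillips--Sarnak boundary inequality. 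Apply it on each circle $\Sigma_i$ and combine the two circles by convexity of $\log$. This gives
\[
\log\frac1{4\pi}\int_\Sigma e^\psi\,d\theta\le C(l)\bigl(\|\psi\|_{H^1}+\|\psi\|_{H^1}^2\bigr)\le C(l)(E^{1/2}+E).
\]
This is the source of the linear term $E$ in the statement.

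Combining both bounds,
\[
|\bar\phi|\le \left|\log\frac{L_g(\Sigma)}{4\pi}\right|+C(l)(E^{1/2}+E),
\]
and together with the bound on $\|\psi\|_{H^1}$ this gives the proposition. The main obstacle is the boundary exponential-integrability step. The $\dot H^{1/2}$ seminorm on each circle must be controlled by the $H^1$ norm of $\psi$ on the cylinder with a constant depending only on $l$. This follows from the trace theorem together with the fact that the harmonic extension minimizes Dirichlet energy.
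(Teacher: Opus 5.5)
Your proof is correct and is essentially the paper's argument: you split off a constant, use $L_g(\Sigma)=\int_\Sigma e^{\phi}\,d\theta$ with Jensen for one inequality and the Beckner--Onofri (Lebedev--Milin) inequality on each circle plus trace and Poincar\'e estimates for the other, and then invoke Lemma \ref{rigidityphi}. The paper differs only in normalizing $\psi$ by its boundary average, so that Jensen gives $\geq 0$, and in joining the circles with a $\cosh r$ bound; in your version the circles should be combined via $\frac12(A_1+A_2)\le\max(A_1,A_2)$ with $A_i=\frac1{2\pi}\int_{\Sigma_i}e^{\psi}\,d\theta$, not by ``convexity of $\log$'' ($\log$ is concave, which gives the wrong direction).
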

\begin{proof}
Decompose $
\phi=\psi-c$, where $c\in\mathbb R$  is chosen so that
\[
\int_\Sigma \psi\,d\theta=0.
\]
Since
\[
L_g(\Sigma)=\int_\Sigma e^\phi\,d\theta
=e^{-c}\int_\Sigma e^\psi\,d\theta,
\]
we obtain
\begin{equation}\label{eq::cdef}
   c=\log\left(\frac1{4\pi}\int_\Sigma e^\psi\,d\theta\right)-\log\left(\frac{L_g(\Sigma)}{4\pi}\right).
\end{equation}
By Jensen's inequality, we get \[\log\left(\frac1{4\pi}\int_\Sigma e^\psi\,d\theta\right)\geq 0.\] 

For $j=1,2$, we set
\[
\mathrm{Aver}_{\Sigma_j,g_0}(\psi):=\frac1{2\pi}\int_{\Sigma_j}\psi\,d\theta,
\qquad
v_j:=\psi|_{\Sigma_j}-\mathrm{Aver}_{\Sigma_j,g_0}(\psi).
\]
Then
\[
\int_{\Sigma_j}v_j\,d\theta=0.
\]
Since \(\int_\Sigma \psi\,d\theta=0\),  writing $r=\mathrm{Aver}_{\Sigma_1,g_0}(\psi)=-\mathrm{Aver}_{\Sigma_2,g_0}(\psi)$, we have
\[
\psi|_{\Sigma_1}=v_1+r,\qquad
\psi|_{\Sigma_2}=v_2-r.
\]
Note that by definition and Cauchy--Schwarz inequality, one has $r^2 \le \frac 1{2\pi}\int_{\Sigma_i}\psi^2$ for each $i$, and thus $r^2 \le \frac 1{4\pi}\|\psi\|_{L^2(\Sigma)}^2$. 

On each boundary component $\Sigma_j$, which is isometric to a unit circle, the
Beckner--Onofri inequality (see \cite{Beckner1993,OSGOOD1988148}) gives
\[
\log\left(
\frac1{2\pi}\int_{\Sigma_j}e^{v_j}\,d\theta\right)\leq\frac{1}{4\pi}\left\| |D_{g_0}|^{\frac12}v_j\right\|^2_{L^2(\Sigma_j,g_0)},
\qquad j=1,2.
\]
Therefore

\begin{align*}
\frac1{4\pi}\int_\Sigma e^\psi\,d\theta
&=\frac12 e^r
\left(\frac1{2\pi}\int_{\Sigma_1}e^{v_1}\,d\theta
\right)+\frac12 e^{-r}
\left(\frac1{2\pi}\int_{\Sigma_2}e^{v_2}\,d\theta
\right)\notag \\
&\leq \frac12 e^r
\exp\left(\frac{1}{4\pi}\left\| |D_{g_0}|^{\frac12}v_1\right\|^2_{L^2(\Sigma_1,g_0)}\right)
+\frac12 e^{-r}\exp\left(\frac{1}{4\pi}\left\| |D_{g_0}|^{\frac12}v_2\right\|^2_{L^2(\Sigma_2,g_0)}\right)\notag\\
&\leq\cosh(r)\,\exp\left(
\frac{1}{4\pi}\sum_{j=1}^2\left\| |D_{g_0}|^{\frac12}v_j\right\|^2_{L^2(\Sigma_j,g_0)}\right).
\end{align*}
Taking logarithms and using the elementary bound
\[
\log\cosh(r)\leq \frac12 r^2,
\]
we get
\begin{equation}\label{eq::estimateforc}
\log\left(\frac1{4\pi}\int_\Sigma e^\psi d\theta
\right)
\leq\frac12 r^2+\frac{1}{4\pi}\sum_{j=1}^2\left\| |D_{g_0}|^{\frac12}v_j\right\|^2_{L^2(\Sigma_j,g_0)}.
\end{equation}
The right-hand side is controlled by the $L^2$-norm of $\nabla_{g_0}\psi$. Indeed,
since $\psi$ has zero boundary average, the trace estimate and the Poincaré inequality give
\begin{equation}\label{highestimateforc}
\sum_{j=1}^2\left\| |D_{g_0}|^{\frac12}v_j\right\|^2_{L^2(\Sigma_j,g_0)}\leq \|\psi\|_{H^{\frac12}(\Sigma,g_0)}^2\leq C(l)\|\nabla_{g_0}\psi\|^2_{L^2(\mathcal C_l,g_0)}
=C(l)\|\nabla_{g_0}\phi\|^2_{L^2(\mathcal C_l,g_0)}.
\end{equation}
For the first term, we have 
\begin{equation} \label{estimateforcloow}
    r^2\le\frac{1}{4\pi}\|\psi\|^2_{L^2(\Sigma,g_0)}
    \leq C(l)\|\nabla_{g_0}\psi\|^2_{L^2(\mathcal C_l,g_0)}
=C(l)\|\nabla_{g_0}\phi\|^2_{L^2(\mathcal C_l,g_0)}.
\end{equation}
Combining \eqref{eq::cdef}, \eqref{eq::estimateforc}, \eqref{highestimateforc} and \eqref{estimateforcloow}, we obtain
\[
|c|\leq C(l)\|\nabla_{g_0}\psi\|^2_{L^2(\mathcal C_l,g_0)}+\left|\log\left(\frac{L_g(\Sigma)}{4\pi}\right)\right|.
\]
Finally,
\[
\|\phi\|_{H^1(\mathcal C_l,g_0)}\leq \|\psi\|_{H^1(\mathcal C_l,g_0)}+C(l)|c|.
\]
Similarly, since $\psi$ has zero boundary average, it follows from the Poincaré inequality that
\[
\|\psi\|_{H^1(\mathcal C_l,g_0)}\leq C(l)\|\nabla_{g_0}\psi\|_{L^2(\mathcal C_l,g_0)}=
C(l)\|\nabla_{g_0}\phi\|_{L^2(\mathcal C_l,g_0)}.
\]
Combining this with the previous estimate for $c$, we get
\[
\|\phi\|_{H^1(\mathcal C_l,g_0)}
\leq C(l)\left(
\|\nabla_{g_0}\phi\|_{L^2(\mathcal C_l,g_0)}+\|\nabla_{g_0}\phi\|_{L^2(\mathcal C_l,g_0)}^2
+\left|\log\left(\frac{L_g(\Sigma)}{4\pi}\right)\right|\right).
\]
The desired result then follows from Lemma \ref{rigidityphi}.
\end{proof}

Combining Proposition \ref{H1normupbd} with Rellich's compact embedding theorem, we obtain the following compactness result which can be viewed as the Steklov analogue of the theorem in the appendix of  \cite{ChangYang}. 
\begin{corollary}\label{compactinconfclass}
   Let $\mathcal{F}_{L,\kappa,\bar{\sigma}}\subset \mathcal{A}$ be a family of
flat annular surfaces in a single conformal class. Suppose that for each $S\in\mathcal{F}_{L,\kappa,\bar{\sigma}}$, the following conditions hold:
    \begin{enumerate}
        \item the total length of $\d S$ equals $L$;
        \item the geodesic curvature $k_{\d S}$ of $\d S$ satisfies $\|k_{\d S}\|_{L^2}\leq \kappa$;
        \item the first nonzero Steklov eigenvalue $\sigma_1(\Lambda_{\d S},\d S)\geq \bar{\sigma}>0$.
    \end{enumerate}
    Then $\mathcal{F}_{L,\kappa,\bar{\sigma}}$ is compact in the $H^{1-\epsilon}$ topology for any $\epsilon>0$.
\end{corollary}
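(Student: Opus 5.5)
Each surface in $\mathcal{F}_{L,\kappa,\bar\sigma}$ lies in a single conformal class, so by the Osgood--Phillips--Sarnak uniformization recalled above it is isometric to $(\mathcal C_l, e^{2\phi}g_0)$ for one fixed $l$. Flatness means each conformal factor $\phi$ is $g_0$-harmonic. Compactness in the $H^{1-\epsilon}$ topology therefore amounts to showing that, for every sequence $\phi_k$ arising in this way, some subsequence converges in $H^{1-\epsilon}(\mathcal C_l,g_0)$. The limit would then give a metric $e^{2\phi}g_0$ with $\phi$ harmonic, where this holds in the weak sense and hence smoothly in the interior. The isometries $P_k$ are simply the uniformizing maps.

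The first step is a uniform $H^1$ bound, which follows from Proposition \ref{H1normupbd}. Under the hypotheses we have $L_g(\Sigma)=L$, $\int_\Sigma k_g^2\,d\theta_g\le\kappa^2$ and $\sigma_1\ge\bar\sigma$. This gives
\[
\|\phi\|_{H^1(\mathcal C_l,g_0)}\le C(l)\left[\frac{\kappa}{\bar\sigma^{1/2}}+\frac{\kappa^2}{\bar\sigma}+\left|\log\frac{L}{4\pi}\right|\right],
\]
a bound independent of the particular surface. One bookkeeping point needs checking. The $L^2$ norm of $k$ in condition (2) is taken with respect to the induced boundary measure $d\theta_g$, which matches the quantity in Lemma \ref{rigidityphi}.

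Next, Rellich's theorem on the compact manifold with boundary $\mathcal C_l$ says that $H^1\hookrightarrow H^{1-\epsilon}$ is compact. So any sequence $\phi_k$ has a subsequence converging in $H^{1-\epsilon}$, and by weak compactness also weakly in $H^1$, to some $\phi$. Harmonicity passes to the limit in the distributional sense. The metrics $g_k=e^{2\phi_k}g_0$ should then converge to $g=e^{2\phi}g_0$.

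The step I expect to be the main obstacle is interpreting convergence of the metrics $g_k$, rather than of the factors $\phi_k$, in $H^{1-\epsilon}$. The map $\phi\mapsto e^{2\phi}$ is not continuous on $H^{1-\epsilon}$ in two dimensions. My first approach is to use interior elliptic regularity for the harmonic functions $\phi_k$, which upgrades convergence to $C^\infty_{\mathrm{loc}}$ in the interior. On the boundary I would use the Moser--Trudinger/Beckner--Onofri exponential integrability already exploited above. This gives uniform $L^p$ bounds on $e^{2\phi_k}$ for every $p$, and together with convergence in measure, Vitali's theorem yields $e^{2\phi_k}\to e^{2\phi}$ in every $L^p$. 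If that is insufficient for full $H^{1-\epsilon}$ convergence of the metric, I would read the statement, as the paper's definition permits, as convergence of the conformal factors in $H^{1-\epsilon}$.
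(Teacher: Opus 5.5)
Your argument matches the paper's proof: you feed $L_g(\Sigma)=L$, $\int_\Sigma k_g^2\,d\theta_g\le\kappa^2$ and $\sigma_1\ge\bar\sigma$ into Proposition \ref{H1normupbd} to get a uniform $H^1(\mathcal C_l,g_0)$ bound on the harmonic conformal factors, then apply Rellich's compact embedding $H^1\hookrightarrow H^{1-\epsilon}$, and the paper says nothing more. Your concern about passing from $\phi_k$ to $g_k=e^{2\phi_k}g_0$ goes beyond the paper, and you can close it instead of reinterpreting the statement: Moser--Trudinger and H\"older bound $\nabla e^{2\phi_k}=2e^{2\phi_k}\nabla\phi_k$ in $L^q$ for every $q<2$, so for $q$ close to $2$ the functions $e^{2\phi_k}$ are bounded in $W^{1,q}\subset H^{1-\epsilon/2}$, and Rellich again gives $H^{1-\epsilon}$ convergence to $e^{2\phi}$, with the limit identified by a.e.\ convergence.
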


\subsection{The Steklov determinant on flat cylinders} 
We begin by recalling the Steklov eigenvalue problem on a flat cylinder $\mathcal{C}_l$ of height $l$, equipped with the standard product metric $g_0$. Recall that $\Sigma=\Sigma_1\cup\Sigma_2$ denotes the boundary of $\mathcal{C}_l$.
For convenience, we write \[D:=D_{\Sigma,g_0},\quad \La:=\La_{\Sigma,g_0}.\]

We shall normalize the inner product on $L^2(\Sigma)$ as  
\begin{equation}\label{innerprod}
    (f,g)_{L^2(\Sigma)}:=\frac{1}{4\pi}\int_\Sigma f\bar{g}\ d\theta.
\end{equation}
Set  
\begin{equation}\label{DtNeiforcyl}
    \sigma_k^{(1)}=k\tanh \frac{kl}{2},  \quad \text{and}\quad 
    \sigma_k^{(-1)} = \left\{
    \begin{array}{ll}
    	k\coth\frac{kl}{2}, & k \ne 0,\\
    2/l, & k=0.
    \end{array}
    \right.
\end{equation}   
Then  the Steklov eigenvalues of $(\mathcal{C}_l,g_0)$ are precisely the numbers $ \sigma_k^{(1)}$ and $\sigma_k^{(-1)}$ ($k \in \mathbb Z$), with corresponding $L^2$-normalized eigenfunctions 
\begin{equation}\label{cyleigenfunc}
    \psi_k^{(1)}(\theta)=\left\{\begin{aligned}
      &e^{\sqrt{-1}k\theta},  & \text{on }\Sigma_1\\     
      &e^{\sqrt{-1}k\theta},   & \text{on }\Sigma_2
    \end{aligned}\right.
    \quad\text{and}\quad
    \psi_k^{(-1)}(\theta)=\left\{\begin{aligned}
      &-e^{\sqrt{-1}k\theta},   &\text{on }\Sigma_1\\     
      &e^{\sqrt{-1}k\theta},       &\text{on }\Sigma_2
    \end{aligned}\right.
\end{equation}
respectively. Namely, we have
\begin{equation*}
    \La \psi^{(\nu)}_k=\sigma^{(\nu)}_k \psi^{(\nu)}_k.
\end{equation*}

Note that in this special example, the product of two normalized eigenfunctions remains a normalized eigenfunction,  namely, 
\begin{equation}\label{prodrule}
    \psi_k^{(\eta)}\cdot \psi_j^{(\nu)}=\psi_{k+j}^{(\eta\nu)},
\end{equation}
for each $k, j\in\Z$ and $\eta, \nu\in \Z_2^\times=\{1,-1\}$.

With these eigenvalues at hand, we can directly calculate the log-determinant of  $\La$. 

\begin{proposition}\label{prop:cylinderlogdet}
 We have
    \begin{equation}\label{eq::2.8}
        \log{\det}' \Lambda=-\zeta_{\Sigma,g_0}'(0)=\log \frac{2}{l}+2 \log (2\pi).
    \end{equation}
\end{proposition}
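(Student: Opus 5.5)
The plan is to compute $\zeta_{\Sigma,g_0}(s)$ directly from the explicit spectrum \eqref{DtNeiforcyl}. The main step is to split off a multiple of the Riemann zeta function; what remains is a series that converges for all $s$ and can be differentiated term by term.

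First, list the nonzero spectrum with multiplicities. The eigenvalue $\sigma_0^{(1)}=0$ is the kernel (the constants) and is excluded. The value $\sigma_0^{(-1)}=2/l$ occurs once. For each $k\ge 1$, the indices $\pm k$ give the eigenvalue $k\tanh\frac{kl}{2}$ twice and $k\coth\frac{kl}{2}$ twice. Hence, for $\operatorname{Re}s>1$,
\[
\zeta_{\Sigma,g_0}(s)=\Big(\frac{2}{l}\Big)^{-s}+2\sum_{k\ge1}k^{-s}\Big[\Big(\tanh\tfrac{kl}{2}\Big)^{-s}+\Big(\coth\tfrac{kl}{2}\Big)^{-s}\Big].
\]
Adding and subtracting $2k^{-s}$ inside the bracket, I will rewrite this as
\[
\zeta_{\Sigma,g_0}(s)=\Big(\frac{2}{l}\Big)^{-s}+4\zeta_R(s)+2F(s),\qquad F(s):=\sum_{k\ge1}k^{-s}\Big[\Big(\tanh\tfrac{kl}{2}\Big)^{-s}+\Big(\coth\tfrac{kl}{2}\Big)^{-s}-2\Big].
\]

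The key analytic point, and the only real obstacle, is that $F$ extends to an entire function whose derivative can be computed term by term. Since $\tanh\frac{kl}{2}$ and $\coth\frac{kl}{2}$ equal $1+O(e^{-kl})$, for $s$ in any compact set $K\subset\C$ we have
\[
\Big|\Big(\tanh\tfrac{kl}{2}\Big)^{-s}-1\Big|+\Big|\Big(\coth\tfrac{kl}{2}\Big)^{-s}-1\Big|\le C_K\,|s|\,e^{-kl}.
\]
Multiplying by $|k^{-s}|\le k^{C_K}$ still leaves a summable bound. So the series converges uniformly on compact sets, and by Weierstrass' theorem $F$ is entire and may be differentiated termwise. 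This identity therefore gives the meromorphic continuation of $\zeta_{\Sigma,g_0}$, consistent with Proposition \ref{traceidentities}.

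Finally, differentiate at $s=0$:
\[
F'(0)=\sum_{k\ge1}\Big[-\log\tanh\tfrac{kl}{2}-\log\coth\tfrac{kl}{2}\Big]=0,
\]
because $\tanh\cdot\coth=1$; the $\log k$ terms vanish since they multiply a bracket that is zero at $s=0$. Using $\zeta_R'(0)=-\frac12\log(2\pi)$, we get
\[
\zeta_{\Sigma,g_0}'(0)=-\log\frac{2}{l}+4\zeta_R'(0)=-\log\frac{2}{l}-2\log(2\pi),
\]
which is \eqref{eq::2.8}.
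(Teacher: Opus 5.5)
Your proposal is correct and follows essentially the same route as the paper. You subtract $4\zeta_R(s)$ from the explicit spectral sum, note that the remainder is entire because $\tanh$ and $\coth$ tend to $1$ exponentially fast, and differentiate term by term at $s=0$, where $\log\tanh+\log\coth=0$. Your version additionally writes out the uniform-convergence justification for termwise differentiation, which the paper only sketches.
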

\begin{proof}
Let $\zeta_R(s)$ be the Riemann zeta function. Then by \eqref{DtNeiforcyl},
\begin{equation}\label{eq::zetacylinder}
	4\zeta_R(s)  \!-\! \zeta_{\Sigma,g_0}(s)\!=\!\!\sum_{k\in\Z\setminus\{0\}}\!|k|^{-s}\left(2\!-\!\left(\tanh \frac{|k|l}{2}\right)^{-s}\!-\!\left(\coth \frac{|k|l}{2}\right)^{-s}\right)\!-\!\left(\frac{2}{l}\right)^{-s}
\end{equation}
for all $s\in \C$.  Note that as $x\to \infty$,
\begin{equation*}
\tanh x=1-O(x^{-\infty}),\quad \text{and}\quad \coth x=1+O(x^{-\infty}).
\end{equation*}
So we may take the derivative of \eqref{eq::zetacylinder} term by term and let $s=0$, and get
\[   \log{\det}' \Lambda=-\zeta_{\Sigma,g_0}'(0)=\log \frac{2}{l}-4\zeta_R'(0).\]
Now the result follows from the fact $\zeta_R'(0)=-\frac 12 \log (2\pi)$.
\end{proof}
\begin{corollary}\label{specdetcon}
  Let $S\in\mathcal{A}$ be an annular surface with boundary $M$. Then the Steklov spectrum of $S$ determines its conformal class.
\end{corollary}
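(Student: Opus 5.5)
By the Osgood--Phillips--Sarnak uniformization recalled in Section \ref{uniformazation}, $S$ is isometric to $(\mathcal C_l,g)$ for some $l>0$ and some metric $g=e^{2\phi}g_0$. Moreover, the cylinders $(\mathcal C_l,g_0)$ with different $l$ lie in distinct conformal classes. So it suffices to show that the Steklov spectrum of $S$ determines the modulus $l$.

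The first step uses the conformal invariance of the determinant. By Proposition \ref{prop::zetaconformalinv} applied to the conformally related metrics $g$ and $g_0$ on $\mathcal C_l$,
\[
\frac{{\det}'\Lambda_{\Sigma,g}}{L_g(\Sigma)}=\frac{{\det}'\Lambda_{\Sigma,g_0}}{L_{g_0}(\Sigma)}.
\]
The left-hand side is a Steklov spectral invariant. Indeed, ${\det}'\Lambda_{\Sigma,g}=\exp(-\zeta'_{\Sigma,g}(0))$ is defined purely from the eigenvalues. The total boundary length $L_g(\Sigma)$ is read off from the heat trace asymptotics (or from Weyl's law for $\Lambda_g$, by \eqref{eq::asymptoticformula}).

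The second step evaluates the right-hand side using Proposition \ref{prop:cylinderlogdet}. Since $L_{g_0}(\Sigma)=4\pi$, we get
\[
\log\frac{{\det}'\Lambda_{\Sigma,g_0}}{L_{g_0}(\Sigma)}=\log\frac{2}{l}+2\log(2\pi)-\log(4\pi)=\log\frac{2\pi}{l}.
\]
Hence
\[
l=\frac{2\pi\, L_g(\Sigma)}{{\det}'\Lambda_{\Sigma,g}},
\]
which is a function of the spectrum alone.

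Consequently, if two annular surfaces are Steklov isospectral, their uniformizing moduli coincide. Both are then conformal to the same $(\mathcal C_l,g_0)$, and therefore to each other.

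The only delicate point is checking that Proposition \ref{prop::zetaconformalinv} applies with the normalizations used here. The determinant is the zeta-regularized one with the zero mode removed, and $L_g(\Sigma)$ is the total length of both boundary components. Both conventions match those of Proposition \ref{prop:cylinderlogdet}. The map $l\mapsto 2\pi/l$ is injective, so no further obstacle arises.
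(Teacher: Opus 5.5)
Your proposal is correct and follows the paper's proof essentially verbatim. You uniformize $S$ to $(\mathcal C_l,e^{2\phi}g_0)$, apply the conformal invariance of ${\det}'\Lambda/L$ from Proposition \ref{prop::zetaconformalinv}, and evaluate the cylinder side with Proposition \ref{prop:cylinderlogdet} and $L_{g_0}(\Sigma)=4\pi$, which recovers the modulus as $l=2\pi L_g(\Sigma)/{\det}'\Lambda_g$.
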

\begin{proof}
    According to the discussion in Section \ref{uniformazation}, $\Omega$  is conformal to some standard cylinder $(\mathcal{C}_l,g_0)$ and we can identify  $\Omega$ with $(\mathcal{C}_l,g)$, where $g$ is conformal to $g_0$. Moreover, the conformal class of $\Omega$ is uniquely characterized by $l$. By Proposition \ref{prop::zetaconformalinv} and Proposition \ref{prop:cylinderlogdet}, we have
    \[
    \frac{{\det}' \La_g}{L_g(\Sigma)}=\frac{{\det}' \La}{L_{g_0}(\Sigma)}=\frac{ e^{\log\frac{2}{l}+2 \log (2\pi)}}{4\pi}=\frac{2\pi}{l},
    \]
    which implies
\[
l= \frac{2\pi L_{g}(\Sigma)}{{\det}' \La_g}.
\]
This completes the proof.
\end{proof}
\begin{remark}
	One may apply the same argument to the M\"obius band  $\mathbb M$, and show that the Steklov spectrum associated with any Riemannian metric on $\mathbb M$ determines its conformal class. To see this, one may start with a flat cylinder and glue antipodal points on one boundary component. With this specific surface as model surface, one can explicitly compute its Steklov eigenvalues (again by separation of variables), and then calculate  the corresponding log-determinant. However, the zeta value $\zeta(-2)$ for the  M\"obius band  is no longer a summation of nonnegative terms as in the case of the  cylinder, and thus one cannot apply the same arguments below to prove an analogous compactness result. 
\end{remark}

\section{Zeta invariants and inverse spectral problems}\label{sec::inv}
\subsection{Steklov zeta invariants for annular surfaces}
Let $S\in \mathcal A$ be an annular surface represented by
$(\mathcal C_l,g)$, where 
$g=e^{2\phi}g_0$ 
for some real-valued function $\phi\in C^\infty(\mathcal C_l)$. In what follows, we compute the zeta invariants of $S$, namely the values of the spectral zeta function $\zeta_{\partial S}$ associated with the Dirichlet-to-Neumann operator $\Lambda_g$ at negative even integers.

Define 
\begin{equation}\label{def::a}
	a:=e^{-\phi}|_{\Sigma} \in C^\infty(\Sigma)
\end{equation}
to be a positive function on $\Sigma$.
Then 
\begin{equation*}
	\La_g=a\Lambda,\quad D_g=aD.
\end{equation*}

Note that the eigenfunctions $\{\psi_k^{(1)}\}\cup\{\psi_j^{(-1)}\}$ of $\Lambda$ also form the complete set of eigenfunctions of the operators $D$ and  $|D|$. Namely,
\begin{equation*}
    D\psi_k^{(\nu)}=k\psi_k^{(\nu)},\quad |D|\psi_k^{(\nu)}=|k|\psi_k^{(\nu)},
\end{equation*}
for each $k\in\Z$ and $\nu\in\Z_2^\times$. Therefore the operators $\Lambda$ and $D$ commute.

From the discussion in Section \ref{defzetainvariant} and the equalities \eqref{generalzeta-1}, \eqref{generalzeta-2m}, the zeta invariants of the Steklov spectrum of $\Omega$ can be represented as
\begin{equation}\label{eq::zetainvarcal}
    \zeta_{\Sigma,g}(-2m)=\Tr \left((a\Lambda)^{2m}-(aD)^{2m}\right), \quad m\in\Z_+,
\end{equation}
and we also have
 \begin{equation}\label{eq::zeta2-1}
     \zeta_{\Sigma,g}(-1)=-\frac{\pi}{3}\sum_{i=1}^2\frac{1}{L_g(\Sigma_i)}+\Tr (a^{\frac12}\Lambda a^{\frac12}-|a^{\frac12}Da^{\frac12}|).
 \end{equation}
 
 In the case where the domain $\Omega$ is simply connected, these quantities have been explicitly expressed through the Fourier coefficients of $a$ in \cite{Malkovich2015}. Here we will compute these zeta invariants for an annular surface $\Omega$ in a similar manner.
The positive function $a\in C^\infty(\Sigma)$ defined in \eqref{def::a} can be written as
\begin{equation}\label{eq::expansionofa}
    a=\sum_{\nu=\pm1}\sum_{k\in\Z}a_k^{(\nu)}\psi_k^{(\nu)},
\end{equation}
where
\begin{equation}\label{eq::defofak}
    a_k^{(\nu)}=\left(a,\psi_k^{(\nu)}\right)_{L^2(\Sigma)} 
\end{equation} are the Fourier coefficients of $a$. In particular, by our normalization \eqref{innerprod} and \eqref{cyleigenfunc},
\[a_0^{(1)}=\frac{1}{4\pi}\int_{\Sigma} a\ d\theta. 
\]

Now we express the zeta invariants $\zeta_{\Sigma,g}(-2m)$ via these Fourier coefficients. 
\begin{proposition}
    The zeta invariants $\zeta_{\d S}(-2m)=\zeta_{\Sigma,g}(-2m)$ ($m\in\Z_+$)  of an annular surface $S\in\mathcal{A}$ 
are given by 
    \begin{align}
\zeta_{\Sigma,g}(-2m)&=\Tr \left((a\Lambda)^{2m}-(aD)^{2m}\right)\notag\\
&=\sum_{\substack{{\nu_1\nu_2\dots\nu_{2m}}=1\\ \nu_1,\dots,\nu_{2m}\in \Z_2^\times}} \sum_{\substack{k_1+k_2+\cdots k_{2m}=0\\ k_1,\dots,k_{2m}\in\Z}}N_{k_1,k_2,\cdots,k_{2m-1}}^{\nu_1,\nu_2,\cdots,\nu_{2m-1}}a_{k_1}^{(\nu_{1})}a_{k_2}^{(\nu_{2})}\cdots a_{k_{2m}}^{(\nu_{2m})},\label{eq::zetafunctionexpansion}
    \end{align}
    where the coefficients are  
    \begin{align}
        N_{k_1,k_2,\cdots,k_{2m-1}}^{\nu_1,\nu_2,\cdots,\nu_{2m-1}}=&\sum_{j\in\Z}\Big(\sigma_j^{(1)}\sigma_{j+k_1}^{(\nu_1)}\sigma_{j+k_1+k_2}^{(\nu_1\nu_2)}\cdots\sigma_{j+k_1+k_2+\dots k_{2m-1}}^{(\nu_1\nu_2\cdots\nu_{2m-1})}\notag\\ & +  
        \sigma_j^{(-1)}\sigma_{j+k_1}^{(-\nu_1)}\sigma_{j+k_1+k_2}^{(-\nu_1\nu_2)}\cdots\sigma_{j+k_1+k_2+\dots k_{2m-1}}^{(-\nu_1\nu_2\cdots\nu_{2m-1})}\notag\\
        &-2j(j+k_1)(j+k_1+k_2)\cdots(j+k_1+k_2+\cdots+k_{2m-1})
        \Big).
    \end{align}
\end{proposition}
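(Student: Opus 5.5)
The first equality is \eqref{eq::zetainvarcal}, which is \eqref{generalzeta-2m} of Proposition \ref{traceidentities} specialized to $g=e^{2\phi}g_0$ on $\mathcal C_l$ with $a=e^{-\phi}|_\Sigma$. It remains to compute $\Tr((a\Lambda)^{2m}-(aD)^{2m})$ in the orthonormal basis $\{\psi_j^{(\mu)}\}$ of $L^2(\Sigma)$, normalized by \eqref{innerprod}. The key tool is the product rule \eqref{prodrule}. Inserting the expansion \eqref{eq::expansionofa}, multiplication by $a$ acts on basis vectors by
\[
a\,\psi_j^{(\mu)}=\sum_{\nu=\pm1}\sum_{k\in\Z}a_k^{(\nu)}\psi_{j+k}^{(\mu\nu)}.
\]
Moreover $\Lambda\psi_j^{(\mu)}=\sigma_j^{(\mu)}\psi_j^{(\mu)}$ and $D\psi_j^{(\mu)}=j\psi_j^{(\mu)}$.

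We apply $(a\Lambda)^{2m}$ to $\psi_j^{(\mu)}$ and iterate $2m$ times, reading the factors from right to left. The first application of $\Lambda$ gives the factor $\sigma_j^{(\mu)}$. Multiplication by $a$ then picks $a_{k_1}^{(\nu_1)}$ and moves us to $\psi_{j+k_1}^{(\mu\nu_1)}$. The next $\Lambda$ gives $\sigma_{j+k_1}^{(\mu\nu_1)}$, and so on. After $2m$ steps we land on $\psi_{j+k_1+\cdots+k_{2m}}^{(\mu\nu_1\cdots\nu_{2m})}$. Taking the inner product with $\psi_j^{(\mu)}$ and using orthonormality forces $k_1+\cdots+k_{2m}=0$ and $\nu_1\cdots\nu_{2m}=1$. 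The diagonal matrix entry is therefore
\[
\sum_{\substack{\sum k_i=0\\ \prod\nu_i=1}} a_{k_1}^{(\nu_1)}\cdots a_{k_{2m}}^{(\nu_{2m})}\,\sigma_j^{(\mu)}\sigma_{j+k_1}^{(\mu\nu_1)}\cdots\sigma_{j+k_1+\cdots+k_{2m-1}}^{(\mu\nu_1\cdots\nu_{2m-1})}.
\]
The same computation for $(aD)^{2m}$ gives the same expression with every $\sigma$ replaced by the corresponding integer $j$, $j+k_1$, \dots, $j+k_1+\cdots+k_{2m-1}$, independent of the signs. Summing over $\mu=\pm1$ yields the two $\sigma$-products in $N$ (with $\mu=-1$ giving the superscripts $-\nu_1\cdots\nu_i$) and the term $-2j(j+k_1)\cdots$.

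The main obstacle is justifying the interchange of sums. Two facts are needed: that the trace of the trace-class difference equals the sum over $j$ and $\mu$ of diagonal entries of the difference, and that the sum over $j$ may be pulled inside the sum over $(k,\nu)$.

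The first fact holds because $(a\Lambda)^{2m}-(aD)^{2m}$ is smoothing by Lemma \ref{symbols}, hence trace class. Its trace equals the sum of its diagonal entries in any orthonormal basis. It is essential to subtract entrywise before summing, since each operator separately is not trace class.

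For the second fact, I would use that the $a_k^{(\nu)}$ decay faster than any power of $k$, since $a$ is smooth. I would also use $\sigma_j^{(\pm1)}=|j|+O(e^{-|j|l})$. Expanding each $\sigma$ factor as $|j+\cdots|$ plus an exponentially small error, the difference from the polynomial term splits into two parts. The terms containing an error factor are summable in $j$. The pure absolute-value products differ from the signed products $j(j+k_1)\cdots$ only for the finitely many $j$ (at most $O(\sum|k_i|)$ of them) at which some partial sum changes sign. The discrepancy from these $j$ is bounded polynomially in the $|k_i|$ and is therefore absorbed by the rapid decay of the coefficients. Hence $\sum_j$ of the bracket converges absolutely and defines $N$, with polynomial growth in $k$. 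Combined with the rapid decay of the $a_k^{(\nu)}$, Fubini then justifies the rearrangement and gives \eqref{eq::zetafunctionexpansion}.
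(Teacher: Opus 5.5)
Your proposal is correct and follows the paper's proof: expand $a$ in the orthonormal basis $\psi_k^{(\nu)}$, iterate the product rule \eqref{prodrule} to compute the diagonal entries of $(a\Lambda)^{2m}$ and $(aD)^{2m}$, and let orthogonality force $k_1+\cdots+k_{2m}=0$ and $\nu_1\cdots\nu_{2m}=1$. Your added justification, namely that each $\sum_j$ in $N$ converges with polynomial growth in the $k_i$ (using that $2m$ is even and that $\sigma_j^{(\pm1)}-|j|$ decays exponentially) and that Fubini then applies because the $a_k^{(\nu)}$ decay rapidly, correctly fills in the interchange of summations that the paper leaves implicit.
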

\begin{proof}
    The proof is based on a direct calculation. According to  \eqref{eq::zetainvarcal}, 
\begin{equation}\label{eq::zeta-2mcal}
    \zeta_{\Sigma,g}(-2m)%=\Tr \left((a\La)^{2m}-(aD)^{2m}\right)
    =\sum_{\nu\in\{1,-1\}}\sum_{j\in\Z}\left(\left((a\Lambda)^{2m}-(aD)^{2m}\right) \psi_j^{(\nu)}, \psi_j^{(\nu)}\right)_{L^2(\Sigma)}.
\end{equation}
Substituting \eqref{eq::expansionofa} into \eqref{eq::zeta-2mcal}, and using the product rule \eqref{prodrule} iteratively, the inner product $\left((a\Lambda)^{2m} \psi_j^{(\nu)}, \psi_j^{(\nu)}\right)_{L^2(\Sigma)}$ becomes a summation of terms of the form % Then we complete the proof by noting that each term
    \begin{align}
        \Big(\psi_{k_{2m}}^{(\nu_{2m})}\Lambda\psi_{k_{2m-1}}^{(\nu_{2m-1})}&\Lambda\cdots\psi_{k_1}^{(\nu_{1})}\Lambda\psi_j^{(\nu)},\psi_j^{(\nu)}\Big)_{L^2(\Sigma)}
        \notag\\
&=\sigma_j^{(\nu)}\sigma_{j+k_1}^{(\nu\nu_1)}\cdots\sigma_{j+k_1+k_2+\dots k_{2m-1}}^{(\nu\nu_1\nu_2\cdots\nu_{2m-1})}\left(\psi_{j+k_1+k_2+\cdots+k_{2m}}^{(\nu\nu_1\nu_2\cdots\nu_{2m})}, \psi_j^{(\nu)}
        \right)_{L^2(\Sigma)},
    \end{align}
    which vanishes unless 
\begin{equation*}
    \nu_1\nu_2\cdots\nu_{2m}=1, \quad\text{and }\quad k_1+k_2+\cdots+k_{2m}=0.
\end{equation*}
The inner product $\left((aD)^{2m} \psi_j^{(\nu)}, \psi_j^{(\nu)}\right)_{L^2(\Sigma)}$ can be calculated in the same way.
\end{proof}

Since $a$ is a positive real-valued function, the coefficients in \eqref{eq::expansionofa} satisfy  
\begin{equation*}
    a_k^{(\nu)}=\overline{{a_{-k}^{(\nu)}}}.
\end{equation*}
In particular, when $m=1$ the formula \eqref{eq::zetafunctionexpansion} can be written as
\begin{align}\label{eq::zeta-2}
    \zeta_{\Sigma,g}(-2)=\sum_{k\in\Z} \left(B_l(k)\left|a_k^{(1)}\right|^2
    +B_l'(k)\left|a_k^{(-1)}\right|^2\right).
\end{align}
The constants are given by
\begin{equation}\label{eq::zeta-2Ck}
    B_l(k)=\sum_{j\in\Z }\left(\sigma_{j}^{(1)}\sigma_{j+k}^{(1)}+\sigma_{j}^{(-1)}\sigma_{j+k}^{(-1)}-2j(j+k)\right).
\end{equation}
By the AM--GM inequality, each term satisfies $\sigma_{j}^{(1)}\sigma_{j+k}^{(1)} +\sigma_{j}^{(-1)}\sigma_{j+k}^{(-1)}> 2j(j+k)$. Hence $B_l(k)>0$ for each $k\in\Z$. 
In particular, 
\begin{equation}
    B_l(0)=\sum_{j\in\Z\setminus\{0\} }\left(j^2\tanh^2\frac{jl}{2}+j^2\coth^2\frac{jl}{2}-2j^2\right)+\frac{4}{l^2}>0.
\end{equation}

Similarly, we have 
\[B_l'(0)=\sum_{j\in\Z\setminus\{0\} }\left(2j^2\tanh\frac{jl}{2}\coth\frac{jl}{2}-2j^2\right)=0\]
and for $k \ne 0$,
\begin{equation}\label{eq::zeta-2Ck'}
    B_l'(k)=\sum_{j\in\Z }\left(\sigma_{j}^{(1)}\sigma_{j+k}^{(-1)}+\sigma_{j}^{(-1)}\sigma_{j+k}^{(1)}-2j(j+k)\right)> 0.
\end{equation}

\subsection{Applications to Steklov inverse spectral problems}\label{application}

Let $A_{r_1, r_2,h}\in \mathcal{A}$ be the lateral surface of a truncated cone of height $h$ whose boundary circles have lengths $r_1, r_2$. More precisely, for $h>0$ and $r_1\leq r_2$, let 
\[
A_{r_1, r_2,h} := \left\{(x,y,z)\in\R^3\ |\ x^2+y^2=\left(\frac{r_2-r_1}{2\pi h}z+\frac{r_1}{2\pi}\right)^2, \ z\in[0,h]\right\},
\] 
while for $h=0$ and $r_1<r_2$, let
\[
A_{r_1, r_2,0} := \left\{ (x,y) \mid \frac{r_1^2}{4\pi^2}\leq x^2+y^2 \leq \frac{r_2^2}{4\pi^2}\right\}.
\]
Note that the case $r_1=r_2$ gives a cylinder, while $h=0$ corresponds to a planar annulus. It is not hard to show $a_k^{(1)}=a_k^{(-1)}=0$ for all $k \ne 0$, and hence \eqref{eq::zeta-2} gives
\[ 
\zeta_{\d A_{r_1, r_2,h}}(-2)=\pi^2B_{l_A}(0)\left(\frac{1}{r_1}+\frac{1}{r_2}\right)^2.
\] 
To calculate $l_A$,  we denote by $(\mathcal{C}_{l_A},e^{2u}g_0)$  the cylinder that is isometric to $A_{r_1, r_2,h}$. Then % is isometric to a cylinder  for some $l_A$ and $u\in C^\infty(E_{l_A},\R)$.I
in the coordinates of \eqref{el}, $u$ can be expressed as
\begin{equation}\label{exprofu}
    u(\theta,t)=\frac{t}{l_A}\log \frac{r_2}{r_1}+\log \frac{r_1}{2\pi}.
\end{equation}
Hence the length of the generatrix of $A_{r_1, r_2,h}$ is given by
\[
\sqrt{h^2+\left(\frac{r_2}{2\pi}-\frac{r_1}{2\pi}\right)^2}=\int_0^{l_A}e^udt=\begin{cases}\frac{l_A}{\log\frac{r_2}{r_1}}\left(\frac{r_2}{2\pi}-\frac{r_1}{2\pi}\right),& r_1<r_2,\\
\frac{r_1}{2\pi}l_A, &r_1=r_2.
\end{cases}
\]
It follows that
\begin{equation}\label{exproflA}
    l_A=\begin{cases}\log\frac{r_2}{r_1}\sqrt{1+\frac{(2\pi h)^2}{(r_2-r_1)^2}}, &r_1<r_2,\\
    \frac{2\pi h }{r_1},&r_1=r_2.
    \end{cases}
\end{equation}

As a consequence of \eqref{eq::zeta-2}, we prove 
\begin{theorem}\label{thm::compactnessdoubly}
	For any $r_1 \le r_2$ and $h \ge 0$,   $A_{r_1, r_2,h}$  minimizes $\zeta_{\d S}(-2)$ among the set of surfaces 
	\[\aligned
	\mathcal A_{r_1, r_2, h} := \{S\ |\  S\in\mathcal{A} &\text{ has two boundary components of lengths } r_1, r_2, \\
    &\text{ and is conformal to the cylinder } (\mathcal{C}_{l_A}, g_0) \}
	\endaligned\]
	(where $l_A$ is given by \eqref{exproflA}), and is the unique minimizer of  $\zeta_{\partial S}(-2)$ in the set
	\[
	\mathcal A_{r_1, r_2, h}^{\text{\tiny flat}} := \{S \in \mathcal A_{r_1, r_2, h}\ |\  S\text{ is flat }\}.
	\]
	\end{theorem}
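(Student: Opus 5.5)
Every $S\in\mathcal A_{r_1,r_2,h}$ can be represented as $(\mathcal C_{l_A},g)$ with $g=e^{2\phi}g_0$, so by \eqref{eq::zeta-2}
\[
\zeta_{\d S}(-2)=\sum_{k\in\Z}\Bigl(B_{l_A}(k)\bigl|a_k^{(1)}\bigr|^2+B'_{l_A}(k)\bigl|a_k^{(-1)}\bigr|^2\Bigr)\ \ge\ B_{l_A}(0)\bigl|a_0^{(1)}\bigr|^2 ,
\]
since $B_{l_A}(k)>0$ for all $k$, $B'_{l_A}(k)>0$ for $k\neq0$, and $B'_{l_A}(0)=0$. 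The plan is to show that $a_0^{(1)}$ is fixed by the boundary lengths, while the remaining coefficients can only increase the value.

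For the first point, write $a=e^{-\phi}|_\Sigma$. Since $d\theta_g=e^{\phi}d\theta$, the boundary lengths are $r_i=\int_{\Sigma_i}a^{-1}d\theta$. By Cauchy--Schwarz on each circle, $\int_{\Sigma_i}a\,d\theta\ge 4\pi^2/r_i$. Hence
\[
a_0^{(1)}=\frac1{4\pi}\int_\Sigma a\,d\theta\ \ge\ \pi\Bigl(\frac1{r_1}+\frac1{r_2}\Bigr),
\]
with equality if and only if $a$ is constant on each boundary component. Combining,
\[
\zeta_{\d S}(-2)\ge \pi^2B_{l_A}(0)\bigl(r_1^{-1}+r_2^{-1}\bigr)^2=\zeta_{\d A_{r_1,r_2,h}}(-2),
\]
where the last equality is the computed value for $A_{r_1,r_2,h}$. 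It uses that $A_{r_1,r_2,h}\in\mathcal A_{r_1,r_2,h}$ with conformal parameter $l_A$, by \eqref{exprofu}–\eqref{exproflA}. This proves the minimization claim.

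For uniqueness among flat surfaces, suppose equality holds. Then the Cauchy--Schwarz step is sharp, so $a$ is constant on each $\Sigma_i$: $a|_{\Sigma_i}=2\pi/r_i$. All $a_k^{(\pm1)}$ with $k\neq 0$ then vanish, and $a_0^{(-1)}$ carries no weight since $B'_{l_A}(0)=0$. Hence $\phi|_{\Sigma_1}=\log(r_1/2\pi)$ and $\phi|_{\Sigma_2}=\log(r_2/2\pi)$, with the labeling of the components chosen to match lengths, which is possible via the isometry $t\mapsto l_A-t$. Flatness means $\phi$ is $g_0$-harmonic on $\mathcal C_{l_A}$ with these constant Dirichlet data. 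By uniqueness for the Dirichlet problem, $\phi$ equals the affine function $u(\theta,t)$ of \eqref{exprofu}. Therefore $(\mathcal C_{l_A},g)$ is isometric to $A_{r_1,r_2,h}$.

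The main subtlety I expect is that the identification is well defined: one must check that the boundary component of length $r_1$ can be placed at $t=0$, and that $\zeta(-2)$ is independent of the choice of conformal representative. The latter holds because $\zeta(-2)$ is a spectral invariant, so changing the representative does not affect it. One must also confirm that the remaining coefficients genuinely enter with nonnegative weights; this is exactly the positivity of $B_l$ and $B'_l$ already established.
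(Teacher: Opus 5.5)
Your proposal is correct and follows the paper's proof step by step. The steps are the lower bound $\zeta_{\d S}(-2)\ge B_{l_A}(0)|a_0^{(1)}|^2$ from \eqref{eq::zeta-2}, then Cauchy--Schwarz against the fixed lengths $r_i=\int_{\Sigma_i}a^{-1}\,d\theta$, and in the equality case constant boundary values of $\phi$ together with harmonicity forcing $\phi=u$. The differences are cosmetic: you get constancy of $a$ on each $\Sigma_i$ from sharpness of Cauchy--Schwarz rather than from vanishing of the modes $a_k^{(\pm1)}$, $k\neq0$, and you spell out the boundary labeling and Dirichlet uniqueness that the paper leaves implicit.
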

\begin{proof} According to the uniformization theorem, we may identify $S$ with $(\mathcal{C}_{l_A},e^{2\phi}g_0)$ for some $l_A$ and $\phi\in C^\infty(\mathcal{C}_{l_A};\R)$. Write $\partial S=\Sigma=\Sigma_1 \cup \Sigma_2$ with $L_g(\Sigma_i)=r_i$ ($i=1, 2$) and denote $a=e^{-\phi}|_{\Sigma}$.
	
By \eqref{eq::zeta-2}, %for any $(\Omega,g) \in 	\mathcal A_{r_1, r_2, h}$, 
the value of $\zeta_{\Sigma,g}(-2)$ has a lower bound
\begin{equation}\label{ineq1}
  \zeta_{\Sigma,g}(-2)\geq B_{l_A}(0)\left|a_0^{(1)}\right|^2=\frac{1}{16\pi^2}B_{l_A}(0)\left(\int_{\Sigma} a\ d\theta\right)^2.
\end{equation}
On the other hand, since 
\begin{equation}
	r_i=L_g(\Sigma_i)=\int_{\Sigma_i}a^{-1}d\theta,\quad i=1,2,
\end{equation}
by Cauchy--Schwarz inequality, 
\begin{equation}
	\int_{\Sigma_i} a\ d\theta \ge \frac{4\pi^2}{r_i}, \quad i=1,2.
\end{equation} 
It follows that for any $(\Omega, g) \in 	\mathcal A_{r_1, r_2, h}$, 
\begin{equation*}
    \zeta_{\Sigma,g}(-2)\geq
 \pi^2B_{l_A}(0) \left(\frac{1}{r_1}+\frac{1}{r_2}\right)^2=\zeta_{\d A_{r_1, r_2,h}}(-2),
\end{equation*}
which proves the first half of the theorem.

Equality holds precisely when all coefficients $a_k^{(\nu)}$ vanish except $a_0^{(1)}$ and $a_0^{(-1)}$, which is equivalent to the function $a$ being constant on each connected component of $\Sigma$.  In other words, 
\[
\phi=\log\frac{r_i}{2\pi} \quad \text{ on }\Sigma_i. 
\]

On the other hand,  if we assume that the metric $g=e^{2\phi}g_0$ is  flat, then the function $\phi$ is necessarily harmonic. Consequently, such $\phi$ is unique and $S$ is isometric to the lateral surface of a conical frustum $A_{r_1, r_2,h}$.
\end{proof}

We now turn to the proof of Theorem \ref{main1}.
\begin{proof}[Proof of Theorem \ref{main1}]
 Note that the lengths $r_i$ of the connected components of the boundary 
are Steklov spectral invariants, and hence are determined by the Steklov spectrum. On the other hand, by Corollary \ref{specdetcon}, all annular Steklov isospectral surfaces  lie in the same conformal class.
Therefore, if $S\in \mathcal{A}$ is a  flat surface that has the same Steklov spectrum as $A_{r_1, r_2, h}$, then necessarily $S\in \mathcal A_{r_1, r_2, h}^{\text{\tiny flat}}$. 
The conclusion follows from the second assertion in Theorem \ref{thm::compactnessdoubly}. 
\end{proof}

\section{The $C^\infty$-compactness of Steklov isospectral surfaces}\label{compactness}
\subsection{A tracial inequality} 

We first recall the following classical convexity lemma, which follows from the spectral theorem and Jensen’s inequality. %For the reader’s convenience, we provide a  proof.
\begin{lemma}\label{jensen}
Let $H$ be a Hilbert space, and let $A$ be a self-adjoint operator on $H$.
Let $f$ be a convex function defined on an interval $I\subset \mathbb R$
containing $\sigma(A)$. Then, for every
$u\in \mathrm{Dom}(A)\cap \mathrm{Dom}(f(A))$ with $\|u\|_H=1$, one has
\[
\langle f(A)u,u\rangle_H
\geq
f(\langle Au,u\rangle_H).
\]
\end{lemma}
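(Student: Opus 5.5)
The plan is to reduce the statement to the scalar Jensen inequality by means of the spectral measure of $u$. By the spectral theorem, $A$ has a projection-valued spectral family $E(\lambda)$ supported on $\sigma(A)\subset I$. For a unit vector $u$ we obtain the scalar measure $\mu_u(B):=\langle E(B)u,u\rangle_H$, which is a positive Borel measure on $\sigma(A)$ with total mass $\mu_u(\R)=\|u\|_H^2=1$. In other words, $\mu_u$ is a probability measure supported in $I$.

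Next we express both sides of the inequality as integrals against $\mu_u$. Since $u\in\mathrm{Dom}(A)$, the functional calculus gives $\int|\lambda|^2\,d\mu_u<\infty$. In particular $\lambda$ is $\mu_u$-integrable, because $\mu_u$ is a probability measure, and
\[
\langle Au,u\rangle_H=\int_{\sigma(A)}\lambda\,d\mu_u(\lambda).
\]
Similarly, since $u\in\mathrm{Dom}(f(A))$, we have $\int|f(\lambda)|^2\,d\mu_u<\infty$, so $f$ is $\mu_u$-integrable and
\[
\langle f(A)u,u\rangle_H=\int_{\sigma(A)}f(\lambda)\,d\mu_u(\lambda).
\]
The mean $\bar\lambda:=\int\lambda\,d\mu_u$ of a probability measure supported in the interval $I$ lies in the closed convex hull of $\sigma(A)$. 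This set is contained in $I$, because $I$ is convex and contains $\sigma(A)$. Hence $f(\bar\lambda)$ is defined.

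The last step is the classical Jensen inequality for convex $f$ on $I$, which gives $\int f\,d\mu_u\ge f\bigl(\int\lambda\,d\mu_u\bigr)$. To verify it directly, we take a supporting line at $\bar\lambda$: there is a constant $c$ with $f(\lambda)\ge f(\bar\lambda)+c(\lambda-\bar\lambda)$ for all $\lambda\in I$. Integrating against $\mu_u$ makes the linear term vanish, which yields the claim.

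The only delicate point is the existence of this supporting line when $\bar\lambda$ is an endpoint of $I$, where $f$ may fail to have a finite one-sided derivative. In that case, however, $\bar\lambda$ is an endpoint of the convex hull of the support, so $\mu_u$ is the Dirac mass at $\bar\lambda$ and equality holds trivially. Otherwise $\bar\lambda$ is interior to $I$, and the one-sided derivatives of $f$ there are finite and supply the constant $c$.
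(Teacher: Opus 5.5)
Your proposal is correct and follows the same route as the paper: the spectral measure $d\langle E(\lambda)u,u\rangle_H$ is a probability measure on $\sigma(A)$, and the scalar Jensen inequality then gives the claim. You also supply details the paper leaves implicit. These are the $\mu_u$-integrability of $\lambda$ and $f$, the fact that $\langle Au,u\rangle_H$ lies in $I$ (this holds because $\sigma(A)$ is closed, so its convex hull is already closed and hence contained in $I$), and the endpoint case, where the measure is forced to be a Dirac mass.
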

\begin{proof}
Let $E(\lambda)$ be the spectral family of $A$. By the spectral theorem,
\[
\langle f(A)u,u\rangle_H
=
\int_{\sigma(A)} f(\lambda)\,d\langle E(\lambda)u,u\rangle_H,
\qquad
\langle Au,u\rangle_H
=
\int_{\sigma(A)} \lambda\,d\langle E(\lambda)u,u\rangle_H.
\]
Since $\|u\|_H=1$, $d\langle E(\lambda)u,u\rangle_H$ is a probability
measure on $\sigma(A)$. Jensen's inequality then implies
\[
\langle f(A)u,u\rangle_H
\geq
f(\langle Au,u\rangle_H),
\]
as claimed.
\end{proof}

We next prove a basic positivity property for a trace difference on the
boundary of the flat cylinder. 
\begin{lemma}\label{positivetrace}
    Let  $\mathcal{C}_l=S^1\times [0,l]$ be a cylinder with boundary $\Sigma=\Sigma_1\cup\Sigma_2$ equipped with the standard product metric $g_0$. Then, for any $a\in C^\infty(\Sigma;\R_+)$ and $s\geq 1$, we have
    \begin{equation*}
    \Tr \left((a^{\frac12}\La a^{\frac12})^{s}-(a^{\frac12}|D|a^{\frac12})^{s}\right)> 0.
    \end{equation*}
\end{lemma}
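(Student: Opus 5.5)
The plan is to compute the trace in a well-chosen orthonormal basis of $L^2(\Sigma)$: an eigenbasis of $B:=a^{\frac12}|D|a^{\frac12}$ in which every basis vector is supported on a single boundary component. In that basis I will compare the diagonal entries of $A^s$, where $A:=a^{\frac12}\La a^{\frac12}$, with the eigenvalues of $B$. The comparison has two parts: Lemma \ref{jensen} applied to the convex function $f(x)=x^s$ (for $s\geq1$) on $[0,\infty)$, and an elementary inequality for the explicit cylinder eigenvalues \eqref{DtNeiforcyl}.

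\emph{Step 1: trace class.} First I will check that $A^s-B^s$ is trace class, so that its trace is the sum of its diagonal entries in any orthonormal basis. As in the proof of Proposition \ref{traceidentities}, $A$ and $B$ are the conjugates by $a^{\frac12}$ of $\La_h=a\La$ and $|D_h|=a|D|$. By Lemma \ref{symbols} these two operators have the same full symbol, so $\La_h^s-|D_h|^s$ is smoothing. Conjugating back, $A^s-B^s\in\Psi^{-\infty}(\Sigma)$.

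\emph{Step 2: the basis.} By the proof of Proposition \ref{traceidentities}, $a|D|$ has eigenfunctions $E_{i,k}$. Each $E_{i,k}$ is supported on $\Sigma_i$, and the $E_{i,k}$ are orthonormal in $L^2(\Sigma,a^{-1}d\theta)$. Since $B=a^{-\frac12}(a|D|)a^{\frac12}$, the functions $w_{i,k}:=a^{-\frac12}E_{i,k}$ form an orthonormal eigenbasis of $B$ in $L^2(\Sigma,g_0)$, with eigenvalues $\beta_{i,k}=2\pi|k|/L_h(\Sigma_i)$, and each is supported on one component. For such a $w$ with eigenvalue $\beta$, set $E:=a^{\frac12}w$. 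Then
\[
\langle Aw,w\rangle=\langle \La E,E\rangle,\qquad \beta=\langle Bw,w\rangle=\langle |D|E,E\rangle .
\]
Write $E=\sum_k \widehat E_k e^{\sqrt{-1}k\theta}$ on $\Sigma_i$ and $E=0$ on the other component. In the basis \eqref{cyleigenfunc}, $E$ then has equal weight on $\psi^{(1)}_k$ and $\psi^{(-1)}_k$ for every $k$. Hence, up to the common normalization constant,
\[
\langle \La E,E\rangle=\sum_k \tfrac12\bigl(\sigma^{(1)}_k+\sigma^{(-1)}_k\bigr)|\widehat E_k|^2,\qquad \langle |D|E,E\rangle=\sum_k |k|\,|\widehat E_k|^2 .
\]
For $k\neq0$ we have $\tanh x+\coth x>2$, so $\tfrac12(\sigma^{(1)}_k+\sigma^{(-1)}_k)>|k|$. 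For $k=0$ the left side is $\tfrac12\cdot\tfrac2l=\tfrac1l>0$. Since $E\neq0$, this gives the strict inequality $\langle Aw,w\rangle>\beta\geq0$. This also covers the kernel vectors of $B$ ($k=0$).

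\emph{Step 3: conclusion.} Lemma \ref{jensen} with $f(x)=x^s$ and $\sigma(A)\subset[0,\infty)$ gives
\[
\langle A^sw,w\rangle\geq\langle Aw,w\rangle^s>\beta^s=\langle B^sw,w\rangle .
\]
Summing over the basis $\{w_{i,k}\}$ and using Step 1,
\[
\Tr\bigl(A^s-B^s\bigr)=\sum_{i,k}\bigl(\langle A^sw_{i,k},w_{i,k}\rangle-\beta_{i,k}^s\bigr),
\]
which is a sum of strictly positive terms, so the trace is positive.

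The main obstacle is Step 2. The operator $\La-|D|$ is not sign-definite, because $\sigma^{(1)}_k<|k|$ on the symmetric modes. The positivity therefore relies on the basis functions being supported on a single component, so that the symmetric and antisymmetric modes enter with equal weight. I will also check that the domain hypotheses of Lemma \ref{jensen} hold; this is clear because every $w_{i,k}$ is smooth.
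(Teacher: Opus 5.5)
Your strategy is the paper's: diagonalize $B=a^{\frac12}|D|a^{\frac12}$ in an orthonormal basis of functions each supported on one component, compare $\langle Aw,w\rangle$ with $\langle Bw,w\rangle$ using $\tanh x+\coth x>2$ and $\sigma^{(-1)}_0=2/l$, then apply Lemma \ref{jensen} and sum. The explicit basis you write down, however, is not an eigenbasis of $B$. The functions $E_{i,k}$ from the proof of Proposition \ref{traceidentities} diagonalize $D_h=aD$, hence $|D_h|=|aD|$. In general $|aD|\neq a|D|$. On a component, $E_{i,1}=e^{\sqrt{-1}\chi(\theta)}$ with $\chi'=2\pi a^{-1}/L_h(\Sigma_i)$. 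For $a=1+\epsilon\cos2\theta$ one finds $E_{i,1}=e^{\sqrt{-1}\theta}-\frac{\epsilon}{4}e^{3\sqrt{-1}\theta}+\frac{\epsilon}{4}e^{-\sqrt{-1}\theta}+O(\epsilon^2)$. So $|D|E_{i,1}\neq DE_{i,1}$, and comparing the $e^{\pm\sqrt{-1}\theta}$ coefficients shows that $a|D|E_{i,1}$ is not a multiple of $E_{i,1}$ at all. More conceptually, on $\Sigma_i$ one has $a|D|=a_i\La_{\d\D,g_E}$. If this equaled $|a_iD|$, the left side of \eqref{eq::tracenorm} with $m=1$ would vanish, which the Jollivet--Sharafutdinov bound forbids as soon as $a_i$ has a nonzero Fourier mode of order $\geq 2$. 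The same conflation makes the identity $|D_h|=a|D|$ in your Step 1 false, though that step's conclusion survives. Indeed, $\La-|D|$ is diagonal in the basis $\psi^{(\nu)}_k$ with entries $\sigma^{(\nu)}_k-|k|=O(|k|e^{-|k|l})$. So $A-B=a^{\frac12}(\La-|D|)a^{\frac12}$ is smoothing, and hence so is $A^s-B^s$. This is the paper's remark that $a\La$ and $a|D|$ have the same full symbol.

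This error breaks Step 3 as written. Your $w_{i,k}=a^{-\frac12}E_{i,k}$ are eigenvectors of $B':=|a^{\frac12}Da^{\frac12}|$. For them $\langle Bw,w\rangle=\langle|D|E,E\rangle\geq|\langle DE,E\rangle|=\beta$, typically with strict inequality. So the identities $\beta=\langle Bw,w\rangle$ and $\beta^s=\langle B^sw,w\rangle$ fail, and your final sum equals $\Tr(A^s-B'^s)$, not $\Tr(A^s-B^s)$. Positivity of $\Tr(A^s-B'^s)$ does not give the lemma. We have $\Tr(A^s-B^s)=\Tr(A^s-B'^s)-\Tr(B^s-B'^s)$, and the last trace is $\geq0$ (by Jensen in that same basis), so it has the wrong sign to help.

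The repair, which is exactly the paper's proof, is to not write the eigenfunctions down. Since $|D|$ acts on each circle separately (it is $\La_{\d\D,g_E}$ there, cf. \eqref{eq::idopDLA}) and $a^{\frac12}$ is a multiplication operator, $B$ preserves $L^2(\Sigma_1)\oplus L^2(\Sigma_2)$. Hence $B$ admits an orthonormal eigenbasis $\{\eta_k\}$ with each $\eta_k$ supported on one component. For these $\eta_k$ the rest goes through verbatim: your identity $\langle B\eta,\eta\rangle=\langle|D|E,E\rangle$ (valid for any $\eta$), the strict inequality $\langle\La E,E\rangle>\langle|D|E,E\rangle$, Jensen, and the summation.
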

\begin{proof}
    We first claim that for any nonzero $u\in C^\infty(\Sigma)$ supported on one of the connected components $\Sigma_i$, the following inequality holds:
	\begin{equation*}
		(\Lambda u,u)_{L^2(\Sigma)}>(|D| u,u)_{L^2(\Sigma)}.
	\end{equation*}
Indeed, we write	
\begin{equation*}
		u=\sum_{\nu=\pm1}\sum_{k\in\Z}u_k^{(\nu)}\psi_k^{(\nu)},
\end{equation*}
    where \[u_k^{(\nu)}=\left(u,\psi_k^{(\nu)}\right)_{L^2(\Sigma)}.\]
	Suppose $u=0$ on $\Sigma_1$. Then $u_k^{(1)}=u_k^{(-1)}$ holds for all $k\in \Z$. Therefore 
	\begin{equation*}
		(\Lambda u,u)_{L^2(\Sigma)}=\sum_{k\in \Z}\left(\sigma_k^{(1)}+\sigma_k^{(-1)}\right)\left|u_k^{(1)}\right|^2>\sum_{k\in \Z}2|k|\left|u_k^{(1)}\right|^2=(|D| u,u)_{L^2(\Sigma)}.
	\end{equation*}
It follows that, for any nonzero $u\in C^\infty(\Sigma)$ supported on one boundary component and $a\in C^\infty(\Sigma;\R_+)$, 
    	\begin{equation*}
		(a^{\frac12}\Lambda a^{\frac12} u,u)_{L^2(\Sigma)}>(a^{\frac12}|D|a^{\frac12} u,u)_{L^2(\Sigma)}.
	\end{equation*}
    
Let $\eta_k$ denote the normalized eigenfunctions of the positive elliptic operator $a^{\frac12}|D|a^{\frac12}$ such that each $\eta_k$ is supported on one of the connected components of $\Sigma$. Then by Lemma \ref{jensen}, we have
\begin{align*}
    \left((a^{\frac12}\La a^{\frac12})^{s}\eta_k,\eta_k\right)_{L^2(\Sigma)}&\geq \left(a^{\frac12}\La a^{\frac12}\eta_k,\eta_k\right)^s_{L^2(\Sigma)}\\
    &>\left(a^{\frac12}|D| a^{\frac12}\eta_k,\eta_k\right)^s_{L^2(\Sigma)}\\
    &=\left((a^{\frac12}|D| a^{\frac12})^{s}\eta_k,\eta_k\right)_{L^2(\Sigma)}.
\end{align*}
Summing over $k$ yields
\[
\Tr\left((a^{\frac12}\Lambda a^{\frac12})^s
-(a^{\frac12}|D|a^{\frac12})^s\right)>0.
\]
This completes the proof.    
\end{proof}

\subsection{Proof of Theorem \ref{main::compactness}}

We establish the following key estimate, which controls Sobolev norms of the conformal factor in terms of spectral quantities and will play a central role in the proof of Theorem \ref{main::compactness}.
\begin{proposition}\label{prop::ineq1}
    Let $\mathcal{C}_l=S^1\times [0,l]$ be a cylinder with boundary $\Sigma=\Sigma_1\cup\Sigma_2$. Then, for each $m\in\mathbb Z_+$, there exist
positive constants $d_{m,l}$ and $d'_{m}$ such that, for every positive function
$a\in C^\infty(\Sigma;\mathbb R_+)$,
    \[
    \|a^m\|^2_{H^{m+{\frac12}}(\Sigma)}\leq d_{m,l}\left(\Tr \left((a\Lambda)^2-(aD)^2\right)\right)^{m}+d'_{m} \Tr\left((a\Lambda)^{2m}-(aD)^{2m}\right).
    \]
\end{proposition}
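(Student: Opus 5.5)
We adapt the Jollivet--Sharafutdinov argument behind \eqref{jsestimate} to the cylinder. The goal is to bound $\Tr((a\Lambda)^{2m}-(aD)^{2m})$ from below by a positive-definite quadratic form in the Fourier coefficients of $a^m$, up to an error controlled by a power of $\zeta(-2)$.

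The first step is to rewrite the trace so that $a^m$ appears. Since $\Lambda$ and $D$ commute and have the common eigenbasis $\psi_k^{(\nu)}$, we have $\Lambda=|D|+R$, where $R$ is diagonal with eigenvalues $\sigma_k^{(\pm1)}-|k|=O(|k|e^{-|k|l})$ for $k\neq 0$ and $2/l$ for $k=0$. In particular $R$ is smoothing, $R\ge 0$ on the $(-1)$-modes, and $R\le 0$ on the $(1)$-modes. Set $P:=\Lambda^2-D^2=R(2|D|+R)$; it is smoothing and its sign is indefinite.

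Next we commute the factors of $a$ to one side. Using $[D,a]=Da$ (a multiplication operator), we write
\[
(aD)^{2m}=a^{2m}D^{2m}+(\text{lower order}),
\]
and treat $(a\Lambda)^{2m}$ in the same way with $\Lambda$ replacing $D$. Following the simply connected case, the clean identity to aim for is
\[
\Tr\big((a\Lambda)^{2m}-(aD)^{2m}\big)\ \ge\ c\,\Tr\big((a^m\Lambda)^2-(a^mD)^2\big)-(\text{error}),
\]
obtained by grouping the $2m$ factors into pairs and applying an operator convexity inequality. Lemma \ref{jensen} and Lemma \ref{positivetrace}, applied to $a^{1/2}\Lambda a^{1/2}$ and $a^{1/2}|D|a^{1/2}$, would supply this.

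By \eqref{eq::zeta-2} with $a$ replaced by $a^m$, we have
\[
\Tr\big((a^m\Lambda)^2-(a^mD)^2\big)=\sum_k\Big(B_l(k)\big|(a^m)^{(1)}_k\big|^2+B_l'(k)\big|(a^m)^{(-1)}_k\big|^2\Big).
\]
This only yields an $L^2$-type bound, because $B_l(k)$ grows only like $|k|$. Indeed, $\sum_j(\sigma_j\sigma_{j+k}-j(j+k))$ picks up a contribution $\sim |k|\cdot j$ from the range $0<|j|<|k|$ where $j$ and $j+k$ have opposite sign, so $B_l(k)\sim |k|^3$. So $B_l(k)\gtrsim |k|^{3}$, which controls $\|a^m\|_{H^{3/2}}$. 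For the full $H^{m+1/2}$ norm I therefore expect to do what Jollivet--Sharafutdinov do: expand \eqref{eq::zetafunctionexpansion} directly and isolate the terms in which $2m-2$ of the indices $k_i$ vanish in a suitable sense. Those sign-change contributions produce a weight of order $|k|^{2m+1}$ multiplying $|(\widehat{a^m})_k|^2$, once the $a$ factors are regrouped via the product rule \eqref{prodrule}. The leftover terms would then be bounded by products of lower norms. Concretely, I would compare $N^{\nu}_{k}$ with the disk coefficients of \cite{Malkovich2015}. Up to exponentially small corrections in $l$, these coincide with the sum of two disk-type counts, because $\sigma_j^{(\pm1)}=|j|+O(e^{-|j|l})$ for $j\ne 0$. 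This suggests importing \eqref{jsestimate} on each boundary circle and treating the difference as a perturbation.

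The final step is to bound the perturbation by $(\zeta(-2))^m$. The corrections involve the smoothing operator $R$ sandwiched between factors of $a$, so they are bounded by $C_l\|a\|_{L^2}^{2m}$ up to low-order norms. By \eqref{eq::zeta-2}, with $B_l(k)\ge B_l(0)>0$ for the $(1)$-modes and $B'_l(k)>0$ for $k\ne 0$, the quantity $\zeta(-2)$ controls $\|a\|_{H^{3/2}}^2$ apart from the zero mode $a_0^{(-1)}$. That zero mode is bounded by $|a_0^{(1)}|$ because $a>0$, and $|a_0^{(1)}|$ is controlled via $B_l(0)$. Hence $\|a\|_{L^\infty}^{2m}\lesssim\zeta(-2)^m$, which gives the term $d_{m,l}(\zeta(-2))^m$.

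The main obstacle is the second step: showing that the $a$-dependent combinatorial sum dominates $\sum_k |k|^{2m+1}|(\widehat{a^m})_k|^2$ with a constant $d'_m$ independent of $l$, rather than merely something positive. My first attempt would be to reproduce Jollivet--Sharafutdinov's identity on each $\Sigma_i$ for the operators $a|D|$, and then absorb every occurrence of $R$ into the $(\zeta(-2))^m$ term.
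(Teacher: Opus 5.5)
Your proposal has a genuine gap, in the step where you bound the perturbation. Write $\Lambda=|D|+R$. The difference $\Tr((a\Lambda)^{2m}-(a|D|)^{2m})$ is a sum of traces of words $aX_1\cdots aX_{2m}$ with $X_i\in\{|D|,R\}$ and at least one $X_i=R$. $R$ is smoothing, but the remaining $|D|$ factors are unbounded and act on products of $a$. So these traces are not controlled by $\|a\|_{L^2}^{2m}$ or $\|a\|_{L^\infty}^{2m}$. For instance, by cyclicity the words containing exactly one $R$ add up to
\[
2m\,\Tr\bigl(R\,a(|D|a)^{2m-1}\bigr)=2m\sum_{\nu=\pm1}\sum_{j\in\Z}\bigl(\sigma_j^{(\nu)}-|j|\bigr)\bigl\||D|^{\frac12}a(|D|a)^{m-1}\psi_j^{(\nu)}\bigr\|_{L^2(\Sigma)}^2 .
\]
Already the $j=0$, $\nu=-1$ summand, which has weight $2/l$, involves about $m-\frac12$ derivatives of $a$. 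The quantity $\Tr((a\Lambda)^2-(aD)^2)$ only controls $\|a\|_{H^{3/2}}$, so it cannot bound this once $m\ge 3$. Moreover $\sigma_j^{(1)}-|j|\le 0<\sigma_j^{(-1)}-|j|$, so the summands have both signs. No term-by-term estimate will therefore give the lower bound $\ge -C\,(\Tr((a\Lambda)^2-(aD)^2))^m$ that you need. Comparing $N^{\nu}_{k}$ with the disk coefficients hits the same wall: the corrections $\sigma_j^{(\pm1)}-|j|$ are multiplied by polynomially large factors $|j+k_1+\cdots|$, so they are not negligible. Your Step 2 is also not what Lemma \ref{jensen} and Lemma \ref{positivetrace} provide. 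Those lemmas compare $\Lambda$ with $|D|$ under the same weight $a$; they do not compare $(a\Lambda)^{2m}$ with $(a^m\Lambda)^2$.

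The missing idea is that you never need to bound the $R$-part, only its sign, and it is positive. Conjugating by $a^{1/2}$ turns $a\Lambda$ and $a|D|$ into $a^{1/2}\Lambda a^{1/2}$ and $a^{1/2}|D|a^{1/2}$. Lemma \ref{positivetrace} with $s=2m$ then gives $\Tr((a\Lambda)^{2m}-(a|D|)^{2m})>0$. (That lemma tests Jensen's inequality on eigenfunctions of $a^{1/2}|D|a^{1/2}$ supported on a single $\Sigma_i$, where $(\Lambda u,u)>(|D|u,u)$.) You can therefore simply drop this term. What remains is $\Tr((a|D|)^{2m}-(aD)^{2m})$. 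Because $|D|$ restricted to $\Sigma_i$ is \emph{exactly} $\Lambda_{\partial\D,g_E}$, this splits exactly into two disk traces. So \eqref{jsestimate} applies verbatim with the $l$-independent constant $c_m$, and it controls the modes $k\ge m+1$ of $a_i^m$. The modes $|k|\le m$ are bounded by $\|a\|_{L^\infty}^{2m}\lesssim(\Tr((a\Lambda)^2-(aD)^2))^m$. Your $H^{3/2}$ argument from \eqref{eq::zeta-2} correctly supplies this bound. With this, your ``main obstacle'' disappears, and no comparison with the disk coefficients is needed.
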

\begin{proof}
Since on $\mathcal{C}_l$ the operators $a\Lambda$ and $a|D|$ have the same full symbol,
 we can write
    \begin{equation*}
   \Tr \left((a\La)^{2m}-(aD)^{2m}\right)=\Tr \left((a\La)^{2m}-(a|D|)^{2m}\right)+\Tr \left((a|D|)^{2m}-(aD)^{2m}\right).
\end{equation*}

By Lemma \ref{positivetrace}, for each $m\in \Z_+$, we have
\begin{equation}
    \Tr \left((a\La)^{2m}-(a|D|)^{2m}\right)=\Tr\left((a^{\frac12}\La a^{\frac12})^{2m}-(a^{\frac12}|D| a^{\frac12})^{2m}\right)> 0.
\end{equation}

On the other hand, each connected component of the boundary $\Sigma$ is  the boundary of a unit disk $\D$, namely, the top or the bottom of the cylinder. Let $g_E$ be the canonical Euclidean metric on that disk. For any smooth functions $f_i$ defined on $\Sigma$ and supported only on one of the connected components $\Sigma_i$, $i=1,2$, we have
\begin{equation}\label{eq::idopDLA}
    (|D|f_i)\big|_{\Sigma_i}=\Lambda_{\d \D,g_E}(f_i|_{\Sigma_i}).
\end{equation}
It follows that
\begin{equation}\label{eq::zeta-2mtwoterms}
    \Tr \left((a|D|)^{2m}-(aD)^{2m}\right)=\sum_{i=1}^2\Tr_{\Sigma_i}\left((a_i\Lambda_{\d \D,g_E})^{2m}-(a_iD_{\d \D,g_E})^{2m}\right),
\end{equation}
where $a_i=a|_{\Sigma_i}$, $i=1,2$. Note that each term on the right-hand side of the equality \eqref{eq::zeta-2mtwoterms} is non-negative. Indeed, by \cite[Theorem 1.1]{JOLLIVET20181712}, there exists a constant $c_m>0$ such that
\begin{equation}\label{eq::tracenorm}
    \Tr_{\Sigma_i}\left((a_i\Lambda_{\d \D,g_E})^{2m}-(a_iD_{\d \D,g_E})^{2m}\right)\geq  c_m\sum_{k=m+1}^\infty k^{2m+1}|(\widehat{a_i^m})_k|^2
\end{equation}
for $i=1,2$ and $m\in \Z_+$, where we use the notation   $\hat{b}_k$ to represent the $k$-th Fourier coefficient in the Fourier expansion of a function $b\in C^\infty (\d \D)$, namely 
\begin{equation*}
    b=\sum_{k=-\infty}^\infty \hat{b}_ke^{ik\theta }.
\end{equation*}
Recall that the norm of the Hilbert space $H^s(\d\D)$ is defined by
\begin{equation*}
    \|b\|^2_{H^s(\d\D)}=\sum_{k=-\infty}^\infty \left(1+|k|^{2s}\right)|\hat{b}_k|^2
\end{equation*}
for all real $s$. Therefore, by \eqref{eq::tracenorm} we have, for $i=1,2$,
\begin{align}
     \|a_i\|^2_{H^{\frac32}(\d\D)}&\leq 4\sum_{k=1}^\infty k^3|(\widehat{a_i})_k|^2+|(\widehat{a_i})_0|^2 \notag\\
     &\leq \frac{4}{c_1}\Tr_{\Sigma_i}\left((a_i\Lambda_{\d \D,g_E})^{2}-(a_iD_{\d \D,g_E})^{2}\right)+4|(\widehat{a_i})_1|^2+|(\widehat{a_i})_0|^2 \label{eq::237}.
\end{align}

On the other hand, from the definition \eqref{eq::defofak},  
\begin{equation*}
\left\{\begin{aligned}
&(\widehat{a_1})_1=    a^{(1)}_1-a^{(-1)}_1, \\
&(\widehat{a_2})_1=     a^{(1)}_1+a^{(-1)}_1,
\end{aligned}
\right.
\quad\text{and}\quad 
\left|(\widehat{a_1})_0\right|^2+\left|(\widehat{a_2})_0\right|^2\leq    4\left( a^{(1)}_0\right)^2.
\end{equation*}
Hence, by \eqref{eq::zetainvarcal} and \eqref{eq::zeta-2}, the three lowest
Fourier modes are controlled by the Steklov spectral invariant
\(\Tr\left((a\Lambda)^2-(aD)^2\right)\). More precisely, define 
\[
r_l:=\max\left\{\frac{8}{B_l(1)},\,\frac{8}{B_l'(1)},\,\frac{4}{B_l(0)}
\right\}.
\]
Then
\begin{align}
\sum_{i=1}^2\left(4\left|(\widehat{a_i})_1\right|^2+\left|(\widehat{a_i})_0\right|^2\right)&=8\left|a^{(1)}_1\right|^2+8\left|a^{(-1)}_1\right|^2+4\left( a^{(1)}_0\right)^2\notag\\
&\leq r_l\Tr \left((a\Lambda)^2-(aD)^2\right).\label{eq::240}
\end{align}

Combining \eqref{eq::zeta-2mtwoterms}, \eqref{eq::237} and \eqref{eq::240}, we obtain that 
\begin{equation*}
    \|a\|^2_{H^{\frac32}(\Sigma)}\leq \left(\frac{4}{c_1}+r_l\right)\Tr \left((a\Lambda)^2-(aD)^2\right).
\end{equation*}
By the embedding theorem $H^{\frac32}(\Sigma)\subset C(\Sigma)$ we have
\begin{equation}\label{eq::aupperbound}
    a\leq C_{\mathrm{emb}}\left(\frac{4}{c_1}+r_l\right) \left(\Tr \left((a\Lambda)^2-(aD)^2\right)\right)^{{\frac12}},
\end{equation}
where $C_{\mathrm{emb}}>0$ is independent of $l$. Set \[r'_l:=C_{\mathrm{emb}}\left(\frac{4}{c_1}+r_l\right).\] It follows that for $i=1,2$ and for any $m \in \mathbb Z_+$,
\begin{align*}
    \|a_i^m\|_{H^{m+{\frac12}}(\d \D)}^2=&\sum_{|k|\leq m}\left(1+|k|^{2m+1}\right)|(\widehat{a_i^m})_k|^2+2\sum_{k= m+1}^\infty\left(1+|k|^{2m+1}\right)|(\widehat{a_i^m})_k|^2\notag\\
    \leq &(2m+1)(1+m^{2m+1})\left(r'_l\right)^{2m}\left(\Tr \left((a\Lambda)^2-(aD)^2\right)\right)^m\\
    &+\frac 4{c_m}\Tr_{\Sigma_i}\left((a_i\Lambda_{\d \D,g_E})^{2m}-(a_iD_{\d \D,g_E})^{2m}\right).
\end{align*}
This estimate shows that for some positive constants $d_{m,l}$ and $d'_{m}$, we have
\begin{align*}
    \|a^m\|^2_{H^{m+{\frac12}}(\Sigma)}&\leq d_{m,l}\left(\Tr \left((a\Lambda)^2-(aD)^2\right)\right)^m+d'_{m}\Tr\left((a|D|)^{2m}-(aD)^{2m}\right)\\
    &< d_{m,l}\left(\Tr \left((a\Lambda)^2-(aD)^2\right)\right)^m+d'_{m}\Tr\left((a\Lambda)^{2m}-(aD)^{2m}\right).\label{eq::bound2.46}
\end{align*}
\end{proof}

The following proposition provides a lower bound for $a$ in terms of the spectral zeta function.
\begin{proposition}\label{prop::ineq2}
Let $\mathcal{C}_l=S^1\times [0,l]$ be a cylinder with boundary
$\Sigma=\Sigma_1\cup\Sigma_2$. Then, for every positive function
$a\in C^\infty(\Sigma;\mathbb R_+)$, one has the pointwise lower bound
\[
a\geq K_l\left(
\int_\Sigma a^{-1}\,d\theta,\,
\operatorname{Tr}\left(a^{\frac12}\Lambda a^{\frac12}
-\left|a^{\frac12}D a^{\frac12}\right|\right),\,
\operatorname{Tr}\left((a\Lambda)^2-(aD)^2\right)
\right),
\]
where
\[
K_l(x,y,z):=
\frac{2\pi}{x}
\exp\left[
-\sqrt{2\pi x}\left(6y+\frac{2}{\sqrt{B_l(0)}}\sqrt{z}\right)^{\frac12}
\right].
\]
\end{proposition}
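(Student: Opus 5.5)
The plan is to bound the oscillation of $\log a$ on each boundary circle, and then turn this into a pointwise lower bound using the length constraint. Write $x=\int_\Sigma a^{-1}d\theta=L_g(\Sigma)$. On each component $\Sigma_i$ we have $\int_{\Sigma_i}a^{-1}d\theta\le x$. Hence $\min_{\Sigma_i}a^{-1}\le x/(2\pi)$, so $\max_{\Sigma_i}a\ge 2\pi/x$. This explains the prefactor $2\pi/x$ in $K_l$. It then suffices to show that on each $\Sigma_i$
\[
\operatorname{osc}_{\Sigma_i}\log a\le \sqrt{2\pi x}\left(6y+\frac{2}{\sqrt{B_l(0)}}\sqrt z\right)^{\frac12}.
\]
For the oscillation I will use Cauchy--Schwarz:
\[
\int_{\Sigma_i}\bigl|\partial_\theta\log a\bigr|\,d\theta=\int_{\Sigma_i}\frac{|\partial_\theta a|}{a}\,d\theta\le\left(\int_{\Sigma_i}\frac{(\partial_\theta a)^2}{a}\,d\theta\right)^{\frac12}\left(\int_{\Sigma_i}a^{-1}d\theta\right)^{\frac12}.
\]
Here $(\partial_\theta a)^2/a=4\bigl(\partial_\theta a^{\frac12}\bigr)^2$. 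This reduces everything to bounding the Dirichlet energy of $a^{\frac12}$ on $\Sigma$ by $2\pi\bigl(6y+2\sqrt{z/B_l(0)}\bigr)$, up to the harmless constant factors that I will track when fixing constants.

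To bound that energy from $y=\operatorname{Tr}(a^{\frac12}\Lambda a^{\frac12}-|a^{\frac12}Da^{\frac12}|)$, I split $y$ as in the proof of Proposition \ref{prop::ineq1}:
\[
y=\operatorname{Tr}\bigl(a^{\frac12}\Lambda a^{\frac12}-a^{\frac12}|D|a^{\frac12}\bigr)+\operatorname{Tr}\bigl(a^{\frac12}|D|a^{\frac12}-|a^{\frac12}Da^{\frac12}|\bigr).
\]
The first term is positive by Lemma \ref{positivetrace} with $s=1$. By \eqref{eq::idopDLA}, the second term decouples into the two disk traces $\operatorname{Tr}_{\Sigma_i}\bigl(a_i^{\frac12}\Lambda_{\d\D,g_E}a_i^{\frac12}-|a_i^{\frac12}D_{\d\D,g_E}a_i^{\frac12}|\bigr)$. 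For each disk term I aim for an inequality of the form
\[
\operatorname{Tr}_{\Sigma_i}\bigl(a_i^{\frac12}\Lambda_{\d\D,g_E}a_i^{\frac12}-|a_i^{\frac12}D_{\d\D,g_E}a_i^{\frac12}|\bigr)\ \ge\ \frac{1}{6}\cdot\frac{1}{2\pi}\int_{\Sigma_i}\bigl(\partial_\theta a_i^{\frac12}\bigr)^2d\theta-\frac{c}{2\pi}\int_{\Sigma_i}a_i\,d\theta .
\]
Summing over $i$ and using $\int_\Sigma a\,d\theta=4\pi a_0^{(1)}$ gives control of the correction. By \eqref{eq::zeta-2}, $B_l(0)|a_0^{(1)}|^2\le z$, so $a_0^{(1)}\le\sqrt{z/B_l(0)}$. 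This produces exactly the combination $6y+2\sqrt z/\sqrt{B_l(0)}$.

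The main obstacle is the disk inequality above. My first attempt would be a variational (Ky Fan type) argument. Write $T=a_i^{\frac12}D_{\d\D,g_E}a_i^{\frac12}$, and note that $a_i^{\frac12}|D|a_i^{\frac12}$ and $|T|$ have the same full symbol. Test the trace difference on the eigenbasis of $T$, or equivalently on the functions $a_i^{-\frac12}e_k$, with $e_k$ related to the arclength-adapted exponentials $E_{i,k}$ of Proposition \ref{traceidentities}. Then compute $(|D|a_i^{\frac12}\eta,a_i^{\frac12}\eta)-|(T\eta,\eta)|$ explicitly in Fourier modes. The sign change of $k$ for the zero mode and the low modes should produce the $\int(\partial_\theta a^{\frac12})^2$ term, with a constant like $1/6$ coming from $\zeta_R(-1)=-1/12$. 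The negative $\int a$ correction comes from the modes near $k=0$. If this direct route gets messy, the fallback is to adapt the Fourier expansion of Malkovich/Edward for the disk value $\zeta(-1)$ in terms of coefficients of $a^{\frac12}$.
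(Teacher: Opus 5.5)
\textbf{The disk inequality is the gap.} Your reduction coincides with the paper's. You take a point where $a_i\ge 2\pi/r_i\ge 2\pi/x$, apply Cauchy--Schwarz to the oscillation of $\log a_i$ against $\int_{\Sigma_i}a_i^{-1}\,d\theta$, split $y$ into the Lemma~\ref{positivetrace} term plus the two decoupled disk traces, and bound $\int_\Sigma a\,d\theta=4\pi a_0^{(1)}\le 4\pi\sqrt{z/B_l(0)}$. What is missing is the disk inequality. It is the only step with real analytic content, and you leave it as an aim.

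Your suggested ``Ky Fan'' route is really an exact evaluation, not an inequality. The eigenfunctions of $T=a_i^{\frac12}D_{\partial\mathbb D,g_E}a_i^{\frac12}$ are normalized multiples of $a_i^{-\frac12}E_{i,k}$, and in that basis
\[
\operatorname{Tr}_{\Sigma_i}\bigl(a_i^{\frac12}\Lambda_{\partial\mathbb D,g_E}a_i^{\frac12}-|T|\bigr)=\frac{8\pi}{r_i}\sum_{k\ge1}\sum_{n\ge1}n\,\bigl|(\widehat{E_{i,k}})_{-n}\bigr|^2 .
\]
Every $k$ contributes here, not only the modes near $k=0$. Turning this double sum into $\int_{\Sigma_i}|Da_i|^2/a_i\,d\theta$ is a nontrivial identity; it amounts to re-deriving Kogan's formula below. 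Nothing in your sketch supplies it, and the $\zeta_R(-1)$ heuristic does not. Your fallback is also too unspecific to check. Malkovich's method uses the fact that $\zeta(-2m)$ is the trace of a polynomial in $a$, $\Lambda$, $D$, and that fails for a trace involving $|T|$.

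\textbf{How the paper closes this step.} It needs no Fourier computation. Apply \eqref{generalzeta-1}, with one boundary component, to the disk metric $\widetilde g_i=e^{2v_i}g_E$ with $e^{-v_i}|_{\partial\mathbb D}=a_i$. This gives the exact identity
\[
\operatorname{Tr}_{\Sigma_i}\bigl(a_i^{\frac12}\Lambda_{\partial\mathbb D,g_E}a_i^{\frac12}-|T|\bigr)=\zeta_{\partial\mathbb D,\widetilde g_i}(-1)+\frac{\pi}{3r_i}.
\]
Kogan's trace formula $\zeta_{\partial\mathbb D,\widetilde g_i}(-1)=\frac1{12\pi}\int_{\Sigma_i}\bigl(|Da_i|^2/a_i-a_i\bigr)\,d\theta$ then yields your disk inequality once you discard $\pi/(3r_i)>0$. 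With this input the rest of your plan goes through. Summing over $i$, as you propose, even removes the need for the separate nonnegativity of each disk trace.

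\textbf{Your constants are off by a factor $4$.} The $\frac16$ belongs in front of $\frac1{2\pi}\int_{\Sigma_i}(\partial_\theta a_i)^2/a_i\,d\theta$. In front of $\frac1{2\pi}\int_{\Sigma_i}(\partial_\theta a_i^{\frac12})^2\,d\theta$ the correct constant is $\frac23$. With $\frac16$ placed there you would get $24y$ instead of $6y$ in $K_l$. So your claim that the bookkeeping produces exactly the stated combination is not correct as written.
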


\begin{proof}
Write $a_i:=a|_{\Sigma_i}$, $i=1,2$. Since the operators
$a\Lambda$ and $a|D|$ have the same full symbol on $\mathcal{C}_l$,
we have
\begin{align*}
&\operatorname{Tr}\left(a^{\frac12}\Lambda a^{\frac12}
-\left|a^{\frac12}D a^{\frac12}\right|\right) \\
&\quad =
\operatorname{Tr}\left(a^{\frac12}\Lambda a^{\frac12}
-a^{\frac12}|D|a^{\frac12}\right)
+
\operatorname{Tr}\left(a^{\frac12}|D|a^{\frac12}
-\left|a^{\frac12}D a^{\frac12}\right|\right).
\end{align*}
The first term on the right-hand side is positive by
Lemma \ref{positivetrace}. For the second term, using
\eqref{eq::idopDLA}, we obtain
\[
\operatorname{Tr}\left(a^{\frac12}|D|a^{\frac12}
-\left|a^{\frac12}D a^{\frac12}\right|\right)
=
\sum_{i=1}^2
\operatorname{Tr}_{\Sigma_i}\left(
a_i^{\frac12}\Lambda_{\partial\mathbb D,g_E}a_i^{\frac12}
-
\left|a_i^{\frac12}D_{\partial\mathbb D,g_E}a_i^{\frac12}\right|
\right).
\]
By \cite[Lemma 2.2 and (2.21)]{Sharafutdinov18}, each term in this
sum is non-negative. In particular,
\[
\operatorname{Tr}\left(a^{\frac12}\Lambda a^{\frac12}
-\left|a^{\frac12}D a^{\frac12}\right|\right)\geq 0.
\]

On the other hand, by \eqref{eq::defofak} and \eqref{eq::zeta-2}, we have
\begin{equation}\label{eq::aicontrol}
\frac{1}{2\pi}\int_{\Sigma_i}a_i\,d\theta
=(\widehat a_i)_0
\leq 2a_0^{(1)}
\leq
\frac{2}{\sqrt{B_l(0)}}
\left(
\operatorname{Tr}\left((a\Lambda)^2-(aD)^2\right)
\right)^{\frac12}.
\end{equation}

We now follow the proof of \cite[Proposition 4]{Edward01011993}, with
a minor modification. Let $g_E$ be the standard Euclidean metric on
$\mathbb D$, and choose $v_i\in C^\infty(\mathbb D;\mathbb R)$ such
that
\[
\widetilde g_i=e^{2v_i}g_E,
\qquad
e^{-v_i}|_{\partial\mathbb D}=a_i .
\]
By \cite{Kogan1979TraceFormulas},
\begin{equation}\label{eq::kogan}
\zeta_{\partial\mathbb D,\widetilde g_i}(-1)
=
\frac{1}{12\pi}
\int_{\Sigma_i}
\left(
\frac{|Da_i|^2}{a_i}-a_i
\right)\,d\theta .
\end{equation}
Since $a_i$ is smooth and positive, there exists
$\theta_0\in[0,2\pi)$ such that
\[
2\pi\,a_i(\theta_0)^{-1}
=
\int_{\Sigma_i}a_i^{-1}\,d\theta
=:r_i .
\]
For any $\theta_1\in[0,2\pi)$,  the Cauchy--Schwarz inequality, together with \eqref{eq::aicontrol} and \eqref{eq::kogan}, gives
\begin{align*}
\left|\log a_i(\theta_1)-\log a_i(\theta_0)\right|
&=\left|\int_{\theta_0}^{\theta_1}\frac{Da_i}{a_i}\,d\theta\right| \\
&\leq\left(\int_{\theta_0}^{\theta_1}a_i^{-1}\,d\theta
\right)^{\frac12}
\left(\int_{\theta_0}^{\theta_1}\frac{|Da_i|^2}{a_i}\,d\theta
\right)^{\frac12} \\
&\leq\sqrt{2\pi r_i}
\left(6\zeta_{\partial\mathbb D,\widetilde g_i}(-1)+\frac{2}{\sqrt{B_l(0)}}
\left(\operatorname{Tr}\left((a\Lambda)^2-(aD)^2\right)\right)^{\frac12}\right)^{\frac12}.
\end{align*}
Hence
\[
\log a_i(\theta_1)\geq\log\frac{2\pi}{r_i}-\sqrt{2\pi r_i}
\left(6\zeta_{\partial\mathbb D,\widetilde g_i}(-1)
+\frac{2}{\sqrt{B_l(0)}}
\left(\operatorname{Tr}\left((a\Lambda)^2-(aD)^2\right)\right)^{\frac12}\right)^{\frac12}.
\]
Therefore
\begin{align*}
a_i(\theta_1)
&\geq\frac{2\pi}{r_i}
\exp\left[-\sqrt{2\pi r_i}\left(6\zeta_{\partial\mathbb D,\widetilde g_i}(-1)+\frac{2}{\sqrt{B_l(0)}}\left(\operatorname{Tr}\left((a\Lambda)^2-(aD)^2\right)\right)^{\frac12}\right)^{\frac12}\right] \\
&=K_l\left(r_i,\,\zeta_{\partial\mathbb D,\widetilde g_i}(-1),\,
\operatorname{Tr}\left((a\Lambda)^2-(aD)^2\right)\right).
\end{align*}

It remains to replace the componentwise quantities by global ones.
By \eqref{generalzeta-1} and \eqref{generalzeta-2m},
\begin{align*}
\zeta_{\partial\mathbb D,\widetilde g_i}(-1)
&=\operatorname{Tr}_{\Sigma_i}\left(a_i^{\frac12}\Lambda_{\partial\mathbb D,g_E}a_i^{\frac12}-\left|a_i^{\frac12}D_{\partial\mathbb D,g_E}a_i^{\frac12}\right|\right)-\frac{\pi}{3}\left(\int_{\Sigma_i}a_i^{-1}\,d\theta\right)^{-1} \\
&\leq\operatorname{Tr}\left(
a^{\frac12}\Lambda a^{\frac12}-\left|a^{\frac12}D a^{\frac12}\right|\right).
\end{align*}
Also,
\[
r_i=\int_{\Sigma_i}a_i^{-1}\,d\theta\leq\int_\Sigma a^{-1}\,d\theta .
\]
Since $K_l(x,y,z)$ is positive and decreasing in each variable
$x,y,z$, it follows that for $i=1,2$,
\[
a_i\geq K_l\left(
\int_\Sigma a^{-1}\,d\theta,\,
\operatorname{Tr}\left(a^{\frac12}\Lambda a^{\frac12}
-\left|a^{\frac12}D a^{\frac12}\right|
\right),\,\operatorname{Tr}\left((a\Lambda)^2-(aD)^2\right)\right).
\]
This completes the proof.
\end{proof}
We are now ready to prove the main compactness theorem by combining the preceding propositions.
\begin{proof}[Proof of Theorem \ref{main::compactness}]
Let $\mathcal{F}\subset \mathcal{A}$ denote a family of flat annular surfaces with identical Steklov spectrum. For any annular surface $S\in\mathcal{F}$, by Proposition \ref{prop::zetaconformalinv} and the discussion in Section \ref{uniformazation}, $S$ is isometric to some $(\mathcal{C}_l,g)$, where $g=e^{2\phi}g_0$, $\phi\in C^\infty (\mathcal{C}_l,\R)$ and $l$ can be determined by the Steklov spectrum of $S$. Let $a=e^{-\phi}|_{\Sigma}$. 
 By \eqref{eq::aupperbound}, once the Steklov spectrum of $S$ is given, we have a uniform upper bound for $a$,
 \[
 a\leq r'_l \left(\zeta_{\Sigma,g}(-2)\right)^\frac12.
 \]
Additionally, by \eqref{eq::zeta2-1} and Proposition \ref{prop::ineq2},  $a$ is bounded below uniformly by  
\[
a\geq K_l\left(L_g(M),\zeta_{\Sigma,g}(-1)+\frac{\pi}{3}\sum_{i=1}^n \frac1{L_g(M_i)},\zeta_{\Sigma,g}(-2)\right)>0.
\]

On the other hand, by \eqref{eq::zetainvarcal} and Proposition
\ref{prop::ineq1}, for each $m\in\Z_+$ we have 
\begin{equation}\label{eq::conclusion2m}
    \|a^m\|^2_{H^{m+{\frac12}}(\Sigma)}\leq d_{m,l}\left(\zeta_{\Sigma,g}(-2)\right)^{m}+d'_{m}\zeta_{\Sigma,g}(-2m).
\end{equation}
For smooth functions bounded uniformly from above and below away from zero, the equivalence of Sobolev norms $1+\|\cdot\|_{H^s(\Sigma)}$ and $1+\|\log(\cdot)\|_{H^s(\Sigma)}$ leads to the inequality
\begin{equation}\label{eq::2.51}
    \|\phi\|_{H^{m}(\Sigma)}=\|\log a\|_{H^{m}(\Sigma)}  =\frac{1}{m}\|\log a^m\|_{H^{m}(\Sigma)}\leq {C_m}\left(1+\|a^m\|_{H^{m+\frac12}(\Sigma)}\right).
\end{equation}
Since $g$ is a flat metric, the function $\phi$ is harmonic on $\mathcal{C}_l$. Hence, by the standard elliptic estimate, there exists a constant $d''_{m,l}>0$ such that
\begin{equation}\label{eq::2.52}
    \|\phi\|_{H^m(\mathcal{C}_l)}\leq d''_{m,l}\|\phi\|_{H^{m}(\Sigma)}.
\end{equation}
Combining \eqref{eq::conclusion2m}, \eqref{eq::2.51} and \eqref{eq::2.52}, for any metric $g=e^{2\phi}g_0$ with the given Steklov spectrum,
 the conformal factor $\phi$ is uniformly bounded in $H^m(\mathcal{C}_l)$ for each $m\in \Z_+$. By Rellich's compactness theorem, the Sobolev embedding theorem, and a diagonal argument, the corresponding
Steklov isospectral metrics form a precompact set in the $C^\infty$ topology.
Consequently, $\mathcal F$ is compact in the $C^\infty$ topology.
\end{proof}

%\renewcommand{\refname}{References}
%\bibliographystyle{amsalpha}
%\bibliography{ref}
\end{document}